\newcommand{\F}{\ensuremath{\mathbb{F}}}
\newcommand{\Fq}{\ensuremath{\F_q}}
\newcommand{\Fqbar}{\ensuremath{\overline{\F}_q}}
\newcommand{\G}{\ensuremath{\mathcal{G}}}
\newcommand{\T}{\ensuremath{\mathcal{T}}}
\newcommand{\B}{\ensuremath{\mathcal{B}}}
\newcommand{\W}{\ensuremath{\mathcal{W}}}
\newcommand{\Ocal}{\ensuremath{\mathcal{O}}}
\newcommand{\mfrak}{\ensuremath{\mathfrak{m}}}
\newcommand{\Pcal}{\ensuremath{\mathcal{P}}}
\newcommand{\wperm}{\ensuremath{\sigma_w}}
\newcommand{\Nor}[2]{\ensuremath{\mathcal{N}_{#1}(#2)}}
\newcommand{\Cen}[2]{\ensuremath{\mathcal{C}_{#1}(#2)}}
\newcommand{\Gal}{\ensuremath{\mathrm{Gal}}}
\newcommand{\GalPhi}{\ensuremath{\Gal^\Phi}}
\newcommand{\coloneqq}{\mathrel{\mathop:}=}
\newcommand{\ideal}{\trianglelefteqslant}
\newcommand{\charpoly}[1]{\chi_{#1}}
\newcommand{\toq}[1][]{^{\;q^{#1}}}
\newcommand{\phiq}{\ensuremath{\phi_q}}
\newcommand{\SolPhi}{\ensuremath{\operatorname{Sol}^\Phi}}
\newcommand{\Sym}{\ensuremath{\mathrm{S}}}
\newcommand{\GL}{\operatorname{GL}}
\newcommand{\SL}{\operatorname{SL}}
\newcommand{\Sp}{\operatorname{Sp}}
\newcommand{\SU}{\operatorname{SU}}
\newcommand{\SO}{\operatorname{SO}}
\newcommand{\Suz}{{}^{2\!}\operatorname{B}_2}
\newcommand{\Dickson}{\operatorname{G}_2}
\newcommand{\Ree}{{}^{2\!}\operatorname{G}_2}
\newcommand{\Triality}{{}^{3\!}\operatorname{D}_4}
\newcommand{\id}{\mathbbm{1}}
\newcommand{\underdash}{\ensuremath{\,\underline{~~}\,}}
\newcommand{\idblock}[2]{\left[\;#2\;\right]_{#1}}
\newcommand{\tr}{^\mathrm{tr}}
\newcommand{\rank}{\operatorname{rank}}
\newcommand{\diag}{\operatorname{diag}}
\newcommand{\ord}{\operatorname{ord}}
\newcommand{\bdiag}[2]{\ensuremath{\Delta_{#1}(#2)}}
\newcommand{\bdiaginv}[2]{\ensuremath{\nabla_{#1}(#2)}}
\newcommand{\bdiagtilde}[2]{\ensuremath{\widetilde{\Delta}_{#1}(#2)}}
\newcommand{\cyclesymbol}{\ensuremath{\rho}}
\newcommand{\cycle}[2]{\ensuremath{\cyclesymbol_{#1,#2}}}
\newcommand{\cyclelong}[4]{\ensuremath{\cyclesymbol_{#1,#2;#3,#4}}}
\newcommand{\blockdiag}[1]{\big[#1\big]} % block diagonal matrix
\newcommand{\singleton}[3][]{\ensuremath{E_{#2}^{#1}(#3)}}
\renewcommand{\root}[3][]{\singleton[#1]{#2}{#3}}
\newcommand{\weylneg}{\ensuremath{w^{-}}}
\newcommand{\weylpos}{\ensuremath{w^{+}}}
\newcommand{\weylpm}{\ensuremath{w^{\pm}}}
\newcommand{\weyl}{\weylneg}
\newcommand{\kxk}[1]{\ensuremath{#1\!\times\!#1}}
\newcommand{\pcs}{\ensuremath{S}} % (partial) cross-section
\newcommand{\proj}[1]{\overline{#1}} % projection of a matrix into PGL_n
\newcommand{\genname}{\gamma}
\newcommand{\gendiag}{\widetilde{\genname}}
\newcommand{\gentilde}{\widetilde{\genname}}
\newcommand{\charpolyorth}{\ensuremath{\tilde{h}}} % --> possible conflict with other name!?
\newcommand{\minus}{\!-\!}
\newcommand{\plus}{\!+\!}
\newcommand{\q}{^{\:q}}
\newtheorem{thm}{Theorem}[section]
\newtheorem{lemma}[thm]{Lemma}
\newtheorem{prop}[thm]{Proposition}
\newtheorem{cor}[thm]{Corollary}
\newtheorem{rem}[thm]{Remark}
\renewcommand{\labelenumi}{\alph{enumi})}
\newcommand{\companionsymbol}{%
\raisebox{-0.2em}{%
\hspace{0.2em}%
\begin{picture}(0.9,1)
  \put(0,1){\line(1,-1){1}}
  \put(1,0){\line(0,1){1}}
  \linethickness{0.15em}
  \put(-0.019,1.07){\line(1,0){1.039}}
\end{picture}
}}
\newcommand{\companionrevsymbol}{%
\raisebox{-0.2em}{%
\hspace{0.2em}%
\begin{picture}(0.8,1)
  \put(0,1){\line(1,-1){1}}
  \put(0,0){\line(1,0){1}}
  \linethickness{0.15em}
  \put(0,0){\line(0,1){1}}
\end{picture}
}}
\newcommand{\companiontwosymbol}{%
\raisebox{-0.2em}{%
\hspace{0.2em}%
\begin{picture}(0.9,1)
  \put(0,1){\line(1,-1){1}}
  \put(0,1){\line(1,0){1}}
  \linethickness{0.15em}
  \put(1,0){\line(0,1){1}}
\end{picture}
}}
\newcommand{\companionrevtwosymbol}{%
\raisebox{-0.2em}{%
\hspace{0.2em}%
\begin{picture}(0.8,1)
  \put(0,1){\line(1,-1){1}}
  \put(0,0){\line(0,1){1}}
  \linethickness{0.15em}
  \put(0,0){\line(1,0){1}}
\end{picture}
}}
\newcommand{\companion}[2][]{\companionsymbol^{#1}\big(#2\big)}
\newcommand{\companionrev}[2][]{\companionrevsymbol^{#1}\big(#2\big)}
\newcommand{\companiontwo}[2][]{\companiontwosymbol^{#1}\big(#2\big)}
\newcommand{\companionrevtwo}[2][]{\companionrevtwosymbol^{#1}\big(#2\big)}
\newcommand{\osize}[1]{\mbox{\ensuremath{#1}}} %% needed for symbols in smallmatrix to appear in original size
\newcommand{\compmat}[4][]{\ensuremath{%
  \left(\begin{smallmatrix}
           \osize{#2} & \osize{\ldots} &   \osize{#3}  & \osize{#4} \\[0.5ex]
        {#1}1 &        &       &    \\[-1ex]
              & \osize{\ddots} &       &    \\
              &        & {#1}1 &  0 \\
        \end{smallmatrix}\right)}}
\newcommand{\compmatrev}[4][]{\ensuremath{%
  \left(\begin{smallmatrix}
           \osize{#4} & {#1}1 &        &       \\[-0.5ex]
       \osize{\vdots} &       & \ddots &       \\[0.5ex]
           \osize{#3} &       &        & {#1}1 \\[0.5ex]
           \osize{#2} &       &        &   0   \\
        \end{smallmatrix}\right)}}
\newcommand{\compmattwo}[4][]{\ensuremath{%
  \left(\begin{smallmatrix}
            0 &        &       & \osize{#4} \\
        {#1}1 &        &       & \osize{#3} \\[-1ex]
              & \osize{\ddots} &       & \osize{\vdots} \\
              &        & {#1}1 & \osize{#2} \\
        \end{smallmatrix}\right)}}
\newcommand{\compmatrevtwo}[4][]{\ensuremath{%
  \left(\begin{smallmatrix}
            0 & {#1}1 &        &       \\
              &       & \ddots &       \\
              &       &        & {#1}1 \\[0.5ex]
           \osize{#2} &    \osize{#3} & \osize{\ldots} &    \osize{#4} \\
        \end{smallmatrix}\right)}}
\newcommand{\twomat}[5][rnd]{% 2x2-matrix
  \ifthenelse{\equal{#1}{rnd}}{\left(}{} % round parentheses
  \ifthenelse{\equal{#1}{sq}}{\left[}{} % square brackets
  \begin{smallmatrix}
    #2 & #3 \\
    #4 & #5
  \end{smallmatrix}
  \ifthenelse{\equal{#1}{rnd}}{\right)}{}
  \ifthenelse{\equal{#1}{sq}}{\right]}{}}
\newcommand{\threemat}[9]{% 3x3-matrix
  \left(
  \begin{smallmatrix}
    #1 & #2 & #3 \\
    #4 & #5 & #6 \\
    #7 & #8 & #9
  \end{smallmatrix}
  \right)}
\title{Additive Polynomials for Finite Groups of Lie Type}
\author{%
\begin{minipage}[t]{\textwidth}
\begin{center}
  \textsc{Maximilian Albert}\\[1ex]
  \emph{University of Southampton, School of Engineering Sciences\\ SO17 1BJ, Southampton, UK\\
    email: maximilian.albert@gmail.com}
\end{center}
\end{minipage}\\[5em]
  and\\[1em]
\begin{minipage}[t]{\textwidth}
\begin{center}
  \textsc{Annette Maier}\\[1ex]
  \emph{Rheinisch-Westfälische Technische Hochschule (RWTH)\\
    Templergraben 55, 52062 Aachen\\
    email: annette.maier@matha.rwth-aachen.de}
\end{center}
\end{minipage}\\[6.5em]
}
\date{January 29, 2010}
\begin{document}

\maketitle

{
\abstract{%
\noindent
This paper provides a realization of all classical and most
exceptional finite groups of Lie type as Galois groups over function
fields over $\F_q$ and derives explicit additive polynomials for the
extensions. Our unified approach is based on results of Matzat which
give bounds for Galois groups of Frobenius modules and uses the
structure and representation theory of the corresponding connected
linear algebraic groups.
}
}

\clearpage
\tableofcontents

\clearpage
\section{Introduction}

The present work focuses on inverse Galois theory in positive characteristic.
This case was treated by Abhyankar in a large series of papers where he gave particularly nice polynomials for a number of classical groups over function fields in one or two variables over $\Fqbar$ (see \cite{Abhyankar01} and the references cited there).
Unfortunately, there is no obvious generalization of his techniques to other groups (although recently Conway et al.~\cite{McKay08} realized the Mathieu groups using methods inspired by his and Serre's work).
Using different methods, Elkies~\cite{Elkies} constructed polynomials with Galois groups $\SL_n(q)$ and $\operatorname{Sp}_{2k}(q)$ over function fields in several variables over $\mathbb{F}_{q}$.

In this work we present a novel approach based on Frobenius modules.
It uses criteria of Matzat~\cite{Matzat} to derive bounds for their Galois groups (which are matrix groups) and naturally leads to additive polynomials for these groups.
Note that in positive characteristic every finite Galois extension can be obtained by adjoining the roots of an additive polynomial (see \cite[§1.4.1]{Goss}).
The roots of such a polynomial form an $\Fq$-vector space and the Galois group, which acts linearly on the roots, can therefore be naturally embedded into $\GL_n$.
A result of Steinberg which we use for the construction of our Frobenius modules allows us to treat all groups in a unified way.
We derive explicit polynomials for
$\SL_n(q)$, $\Sp_{2k}(q)$, $\SO_n^{\pm}(q)$ (for $q$ odd), $\SU_n(q)$, $\Suz(q)$, $\Dickson(q)$, $\Ree(q)$ and $\Triality(q)$ (for $q$ odd) over a function field over~$\Fq$, where the number of indeterminates depends on the group.
There also is some ongoing work on the groups $\operatorname{F}_4(q)$ and $^{2\!}\operatorname{F}_4(q)$.

Matzat's results were already used by Malle~\cite{Malle} to realize~$\Dickson(q)$ but our method leads to a different Frobenius module and a polynomial with smaller coefficients.
In the case of $\SL_n(q)$ and $\Sp_{2k}(q)$ we obtain the same polynomials as Elkies.

It is an open question whether the polynomials derived in this way are generic.
This is likely to be the case for $\SL_n(q)$ where we essentially recover Dickson's polynomials (with a more specialized scalar term).
We conjecture that it is also true for $\Sp_{2n}(q)$, $\Suz(q)$, $\SU_n(q)$ (for odd~$n$ at least) and possibly $\Dickson(q)$, $\Ree(q)$, since in these cases the resulting polynomials are particularly ``nice'' and suggest that they encode the right kind of symmetries.
The polynomials obtained for~$\SO_n^{\pm}$ are considerably more complicated and their shape makes it less clear whether they are as ``general'' as required for being generic (even though their coefficients may still contain enough symmetries and degrees of freedom for this to be the case).

\medskip
In Section~\ref{sec:notation} we fix our notation, in particular we introduce some specialized abbreviations for certain often occurring matrices.

Section~\ref{sec:the method} starts by collecting all the tools needed for our approach.
It first gives a brief overview of Frobenius modules and their Galois groups and presents Steinberg's result on which the construction of our Frobenius modules is based.
Then it discusses so-called strong generators for groups of Lie type which are needed to make the lower bound criterion work.
After presenting some auxiliary material, it concludes with a detailed overview of the method, describing the basic plan which all the constructions in later sections follow.

In Section~\ref{sec:polynomials for the classical groups} we construct Frobenius modules for the classical groups $\SL_n(q)$, $\Sp_{2k}(q)$, $\SO_n^+(q)$ and derive additive polynomials for each of them.
The latter is achieved by solving the system of equations which defines the so-called solution space of the Frobenius module.
Section~\ref{sec:polynomials for the twisted classical groups} performs the construction for the twisted classical groups $\SU_n(q)$, $\SO_{2k}^-(q)$.
Although the basic approach is the same, some subtleties occur which we have to deal with.
Section~\ref{sec:polynomials for exceptional groups of lie type} finally treats the exceptional groups $\Suz(q)$, $\Dickson(q)$, $\Ree(q)$, $\Triality(q)$.
Apart from $\Triality(q)$ we give explicit polynomials for all groups which actually turn out to be quite simple.
Using a computer algebra system, it is also possible to compute a polynomial which has Galois group $\Triality(q)$ (or possibly~$\Triality(q) \times \operatorname{Z}_2$). However, its coefficients are much too large to be printed.

\medskip
\noindent\textbf{Acknowledgments.} The authors wish to thank B.\,H.~Matzat and J.~Hartmann for many helpful comments and suggestions.
We would also like to thank P.~M\"uller for solving an equation crucial for the realization of the Triality groups $\Triality(q)$ (see Prop.~\ref{lsgi12}). 

\section{Notation}
\label{sec:notation}

\newlength{\widest}
\newlength{\width}
\settowidth{\widest}{$\idblock{j,\textbf{2},\underdash,\textbf{2},j}{\twomat{1}{a}{0}{1}, \twomat{1}{b}{0}{1}}^{n \times n}$}
\setlength{\width}{\textwidth - \widest - 1cm}
\begin{table}[htb]
\begin{tabular}{p{\widest}p{\width}}
$\Fq(\underline t)$ & a function field in several variables over $\Fq$, where $\underline t=(t_1,\dots,t_r)$
for some $r$. \\
$\G$ & a linear algebraic group over $\overline{\Fq(\underline t)}$, if not otherwise stated. \\
$\phiq$ & the standard Frobenius map, $\phiq: a \mapsto a^q$.\\
$F$     & a general Frobenius map (i.e., $F^s = \phiq$ for some~$s$).\\
$\G^F$ & the set of $F$-invariant points of $\G$, where $F$ denotes some Frobenius map; we consider these groups merely as abstract groups and not as linear algebraic groups. \\
$\T_0$ & the maximal torus of $\G$ consisting of all diagonal matrices contained in $\G$, where we fix an embedding \mbox{$\G \hookrightarrow \GL_n$}.\\
$\B_0$ & the Borel subgroup of $\G$ consisting of all upper triangular matrices contained in $\G$, where we fix an embedding \mbox{$\G \hookrightarrow \GL_n$}.\\
$\Nor{\G}{\T_0}$ & normalizer of $\T_0$ in $\G$\\
$\Cen{\G}{\T_0}$ & centralizer of $\T_0$ in $\G$\\
$\W$ & Weyl group\\
$\charpoly{A}$ & characteristic polynomial of~$A$\\
$E_{i,j}(t)$   & a matrix having 1's on the diagonal and one further nonzero entry $t$ at position $(i,j)$\\
$\diag(\lambda_1,\dots,\lambda_n)$ & diagonal matrix with entries $\lambda_1,\dots,\lambda_n$.\\[1ex]
$\idblock{j,\textbf{2},\underdash,\textbf{2},j}{\twomat{0}{1}{1}{0}, \twomat{0}{1}{1}{0}}^{n \times n}$ & a typical block matrix; see below\\[1ex]
$\companionsymbol^\pm$, $\companionrevsymbol^\pm$, etc. & companion matrices; see below
\end{tabular} 
\caption{Common notation}
\label{table:symbols}
\end{table}

\bigskip \noindent
Table~\ref{table:symbols} explains some commonly used symbols.
By $\singleton[\,n]{i,j}{t}$ we denote an $\kxk{n}$-matrix having 1's on the diagonal and one further nonzero entry~$t$ in position~$(i,j)$.
If no confusion can arise the size is omitted.
In the usual case that the nonzero entry is directly above the main diagonal we abbreviate~$\root{i}{t} \coloneqq \singleton{i,i+1}{t}$.
We further define $\weylpm_{i,n} \coloneqq \singleton[n]{i,i+1}{1} \pm \singleton[n]{i+1,i}{1}$. This is an identity matrix where the trivial \kxk{2}-block on the diagonal at position~$(i,i$) is replaced with $\left(\begin{smallmatrix} 0 & 1 \\ \pm 1 & 0 \end{smallmatrix}\right)$.
Naturally, in both abbreviations the index $i$ is always positive.
To simplify notation we agree, however, that negative indices are allowed and should be interpreted as ``counting backwards'' from the matrix size.
That is, we define  $\root{-i}{t} \coloneqq \root{(n+1)-i}{t}$ and $\weylpm_{-i} \coloneqq \weylpm_{(n+1)-i}$ if these matrices have dimensions~\kxk{n}.

If $A_1, \ldots, A_n$ are any square matrices (not necessarily of equal size), we write $\blockdiag{A_1,\ldots,A_n}$ to denote the corresponding block diagonal matrix.

We often need a sort of companion matrix, similar to those occurring in linear algebra. To this end, we define
\begin{alignat*}{2}
  \companion[\pm]{a_1,\ldots,a_n} &= \compmat[\pm]{a_1}{a_{n-1}}{a_n}, \quad
  &\companionrev[\pm]{a_1,\ldots,a_n} &= \compmatrev[\pm]{a_1}{a_2}{a_n},\\[1ex]
  \companiontwo[\pm]{a_1,\ldots,a_n} &= \compmattwo[\pm]{a_1}{a_{n-1}}{a_n}, \quad
  &\companionrevtwo[\pm]{a_1,\ldots,a_n} &= \compmatrevtwo[\pm]{a_1}{a_2}{a_n}.
\end{alignat*}
We frequently omit the plus sign in the notation when the 1's on the \mbox{sub-/}super\-diagonal have positive sign.
Note that the shapes of the symbols reflect the arrangements of the nonzero matrix entries.
Also note that in the second and third form, the entries $a_1,\ldots,a_n$ are arranged \emph{from bottom to top}.

We introduce another symbol for a block diagonal matrix where only a few blocks differ from the identity matrix.
The following example denotes such a matrix with two \kxk{2}-blocks.
$$
\idblock{j,\textbf{2},\underdash,\textbf{2},j}{
    \twomat{1}{a}{0}{1},
    \twomat{1}{b}{0}{1}
  }^{n \times n} =
  \diag\Big(
    \underbrace{1 , \,\ldots\, , 1}_{j},
    \framebox{$\begin{smallmatrix} 1 & a \\[1ex] 0 & 1 \end{smallmatrix}$}\,,
    \underbrace{1 , \,\ldots\, , 1}_{n-2j-4},
    \framebox{$\begin{smallmatrix} 1 & b \\[1ex] 0 & 1 \end{smallmatrix}$}\,,
    \underbrace{1 , \,\ldots\, , 1}_{j},
    \Big).
$$
The superscript \kxk{n} gives the total dimensions of the matrix and is frequently omitted.
The indices occurring in the subscript specify the dimensions of the individual diagonal blocks, from upper left to lower right.
By default, each block equals an identity matrix and only the non-trivial blocks are printed between the square brackets (in the order in which they appear).
As a further indication, their dimensions are printed in bold.
Usually the size of one block is rather complicated compared to the others because it equals $n$ minus the sum of all other block sizes (e.g., the middle block above has size $n-2j-4$).
In such a case, we normally replace it with a dash as in the above example because it can easily be inferred from the remaining data.

This notation makes it convenient to multiply two matrices of this form.
We will encounter two different cases.
First, if both matrices have the same block pattern then the multiplication can be performed block-wise.
On the other hand, we also often have a block matrix with companion matrices as upper left and lower right blocks,
$  \idblock{\textbf{j},\underdash,\;\textbf{j}}{
    \companion{a_1,\ldots,a_{j-1},1},
    \companionrev{1,b_1,\ldots,b_{j-1}}
  }$,
which we want to multiply by some matrix of the previous form, namely
\begin{equation*}
  \idblock{(j-1),\textbf{2},\underdash,\textbf{2},(j-1)}{
    \twomat{a_{j}}{1}{1}{0},
    \twomat{b_{j}}{1}{1}{0}
  }.
\end{equation*}
Note that the \kxk{2}-blocks of the second matrix overlap with the block boundaries of the first.
It is easy to see that the product has the same shape as the first matrix again, but the upper left and lower right blocks are each extended by one row/column:
\begin{multline*}
\idblock{\textbf{j},\underdash,\;\textbf{j}}{
    \companion{a_1,\ldots,a_{j-1},1},
    \companionrev{1,b_1,\ldots,b_{j-1}}
  }\\
 \;\cdot\;
 \idblock{(j-1),\textbf{2},\underdash,\textbf{2},(j-1)}{
    \twomat{a_{j}}{1}{1}{0},
    \twomat{b_{j}}{1}{1}{0}
  }
\\
  =
\idblock{\textbf{(j+1)},\underdash,\textbf{(j+1)}}{
    \companion{a_1,\ldots,a_j,1},
    \companionrev{1,b_1,\ldots,b_j}
  }.
\end{multline*}
This is the primary case which repeatedly occurs during the constructions in
sections~\ref{sec:polynomials for the classical groups}-\ref{sec:polynomials for exceptional groups of lie type}.

\section{The Method}
\label{sec:the method}

\subsection{Frobenius Modules}
\label{sec:frobenius modules}

We briefly recall the essential facts and properties about Frobenius modules.
For an extensive treatment see~\cite{Matzat} or~\cite{Albert}.

\newcommand{\grdfield}{K}%
\newcommand{\extfield}{L}%
\newcommand{\zwmat}{D_0}%
\newcommand{\zwmattilde}{\widetilde{D}_0}%
Let $\grdfield \geq \Fq$ be a field, and let $\phiq$ denote the standard Frobenius endomorphism on~$\grdfield$ which raises every element to its $q$-th power (we will sometimes refer to $K$ as a \emph{Frobenius field}).
A vector space~$M$ of finite dimension~$n$ over~$\grdfield$ together with an injective $\phiq$-semilinear map $\Phi\colon M \to M$ is called a \emph{Frobenius module} (over~$\grdfield$).
The \emph{solution space} of $(M,\Phi)$ is defined as the set of points fixed by~$\Phi$:
\begin{equation}
  \label{eqn:def_solution_space}
  \SolPhi(M) = \big\{ x \in M \;\big|\; \Phi(x) = x \big\}.
\end{equation}
After fixing a $\grdfield$-basis $B=\{b_1,\ldots,b_n\}$ of $M$, we can define the \emph{representing matrix} $D=D_B(\Phi)$ of $\Phi$ with respect to this basis as the matrix which collects in its columns the coefficients of the images~$\Phi(b_j)$ of the basis vectors ($j=1,\ldots,n$).
Hence, $\Phi(x) = D \cdot \phiq(x) = D \cdot \big(x_1^q,\ldots,x_n^q\big)\tr$ for $x=(x_1,\ldots,x_n) \in M$.

For any field extension $\extfield\geq \grdfield$ with extended Frobenius endomorphism, the tensor product $M_\extfield \coloneqq \extfield \otimes_\grdfield M$ becomes a Frobenius module over~$\extfield$ in a natural way.
We call $\SolPhi_\extfield(M) \coloneqq \SolPhi(M_\extfield)$ the \emph{solution space of $M$ over $\extfield$}.
For any extension $\extfield$ this solution space is an $\Fq$-vector space whose dimension satisfies
\begin{equation}
  \label{eqn:dim of solution space}
  \dim_{\Fq}\big(\SolPhi_\extfield(M)\big) \leq \dim_\extfield(M_\extfield) = \dim_\grdfield(M).
\end{equation}
If equality holds (i.e., if the dimension of the solution space is maximal) then $M$ is called \emph{trivial over $\extfield$}.
One can show\footnote{See \cite{Matzat}, Thm.~1.1.} that there always exists a finite extension $\extfield/\grdfield$ of Frobenius fields such that $M$ becomes trivial over $\extfield$.
Furthermore, the minimal such extension is unique (inside a given algebraic closure of~$\grdfield$) and Galois over $\grdfield$.
It is called the \emph{solution field} of $(M,\Phi)$.
The \emph{Galois group of~$M$} is defined as the Galois group of this extension and is denoted $\GalPhi(M)$.

Note that by fixing a basis and using~(\ref{eqn:def_solution_space}), we see that each solution vector $x=(x_1,\ldots,x_n)\tr$ must satisfy
\begin{equation}
  \label{eqn:system of solutions in coordinates}
  D \cdot (x_1^q, \ldots, x_n^q)\tr = (x_1, \ldots, x_n)\tr.
\end{equation}
This yields a system of polynomial equations for the coordinates of a solution vector, and the solution field is generated by the coordinates of a full system of solutions, i.e., a set of solutions which forms a basis of~$\SolPhi(M)$.
We sometimes collect such a full system of solutions in the columns of a \emph{(fundamental) solution matrix~$Y$}, which is characterized by $D\cdot \phiq(Y) = Y$.
In our applications, we will determine actual polynomials with Galois group $\GalPhi(M)$ by eliminating all but one of the~$x_i$, thereby reducing the system~(\ref{eqn:system of solutions in coordinates}) to a single equation involving a univariate additive polynomial (of much higher degree).

\subsection{Galois Groups of Frobenius Modules}

The following theorems make clear why Frobenius modules are particularly suited to study linear groups as Galois groups.
Throughout this section let $(M,\Phi)$ be an $n$-dimensional Frobenius module over $(\grdfield, \phiq)$ with solution field~$\extfield$ and fundamental solution matrix~$Y$.%
\begin{prop}
\label{prop:galois linrep}
There is a faithful linear representation
$\Gamma: \GalPhi(M)=\Gal(\extfield/\grdfield) \hookrightarrow \GL_n(\Fq)$, $\sigma \mapsto Y^{-1} \sigma(Y)$.
\end{prop}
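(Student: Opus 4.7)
The plan is to verify directly that the formula $\sigma \mapsto Y^{-1}\sigma(Y)$ defines a map into $\GL_n(\Fq)$ and then check that it is a faithful group homomorphism. The entire argument rests on two basic facts: $Y$ satisfies $D\cdot\phiq(Y)=Y$ (with $D$ having entries in $\grdfield$), and $\extfield$ is generated over $\grdfield$ by the entries of $Y$ (since by definition the solution field is generated by the coordinates of a full system of solutions).

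First I would show that for each $\sigma\in\Gal(\extfield/\grdfield)$ the matrix $\sigma(Y)$ is again a solution matrix. Applying $\sigma$ entry-wise to the relation $D\cdot\phiq(Y)=Y$, using $\sigma(D)=D$ (the entries of $D$ lie in the fixed field $\grdfield$) and the fact that $\sigma$ commutes with $\phiq$ (raising to the $q$-th power is intrinsic to~$\extfield$), yields $D\cdot\phiq(\sigma(Y))=\sigma(Y)$. Hence $C_\sigma\coloneqq Y^{-1}\sigma(Y)$ is well-defined, and substituting $\sigma(Y)=Y\cdot C_\sigma$ into the Frobenius relation gives
\[
   Y\cdot C_\sigma \;=\; D\cdot \phiq(Y)\cdot \phiq(C_\sigma) \;=\; Y\cdot \phiq(C_\sigma),
\]
so $\phiq(C_\sigma)=C_\sigma$, i.e., $C_\sigma\in\GL_n(\Fq)$.

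Next I would check that $\Gamma\colon \sigma\mapsto C_\sigma$ is a homomorphism. For $\sigma,\tau\in\Gal(\extfield/\grdfield)$ we compute
\[
   \Gamma(\sigma\tau) \;=\; Y^{-1}\sigma(\tau(Y)) \;=\; Y^{-1}\sigma(Y\cdot C_\tau) \;=\; Y^{-1}\sigma(Y)\cdot \sigma(C_\tau) \;=\; C_\sigma\cdot C_\tau,
\]
where the crucial simplification $\sigma(C_\tau)=C_\tau$ uses that the entries of $C_\tau$ lie in $\Fq\subseteq \grdfield$ and are therefore fixed by~$\sigma$. Finally, faithfulness is immediate: if $\Gamma(\sigma)=\id$ then $\sigma(Y)=Y$, so $\sigma$ fixes every entry of $Y$, and since these entries generate $\extfield$ over $\grdfield$ we get $\sigma=\id$.

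There is no real obstacle here; the only delicate points are making sure that $\sigma$ commutes with the Frobenius endomorphism and that the entries of $C_\sigma$ really do land in the prime field $\Fq$ rather than in a larger subfield. Both follow from the identities above, so the proof is essentially a direct calculation once the setup of Section~\ref{sec:frobenius modules} is in place.
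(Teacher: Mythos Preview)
Your argument is correct and complete: you verify that $C_\sigma=Y^{-1}\sigma(Y)$ has entries in~$\Fq$ via the Frobenius relation, check the cocycle identity gives a homomorphism, and use that the entries of~$Y$ generate~$\extfield$ for faithfulness. The paper itself gives no proof and simply cites \cite{Matzat}, Corollary~4.2; your direct computation is exactly the standard argument underlying that reference, so there is nothing to compare.
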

\begin{proof}
  \cite{Matzat}, Corollary~4.2.
\end{proof}

\noindent
There exist powerful criteria to derive bounds for the Galois group of a Frobenius module.
The next one for an upper bound is particularly simple and easy to verify.
\begin{thm}[Matzat]\label{thm:upperbound}
Let $\mathcal G$ be a reduced connected linear algebraic group defined over\/ $\Fq$ and suppose that $D_B(\Phi) \in \mathcal G(\grdfield)$ for some basis $B$ of $M$.
Then $\Gal^\Phi(M) \leq \mathcal G(\Fq)$.
\end{thm}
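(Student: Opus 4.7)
The plan is to promote the image of $\Gamma$ from $\GL_n(\Fq)$ into $\mathcal{G}(\Fq)$ by exhibiting a fundamental solution matrix $Y$ that already lies inside $\mathcal{G}$. By Proposition~\ref{prop:galois linrep} we already know that $\Gamma(\sigma) = Y^{-1}\sigma(Y)$ lies in $\GL_n(\Fq)$ for any fundamental solution matrix $Y$. Since $\mathcal{G}$ is defined over~$\Fq$, we have $\mathcal{G}(L) \cap \GL_n(\Fq) = \mathcal{G}(\Fq)$, so it will suffice to show $Y^{-1}\sigma(Y) \in \mathcal{G}(L)$ for every $\sigma \in \Gal(L/K)$. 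Now suppose $Y$ itself can be chosen inside $\mathcal{G}(L)$. Then, because the defining equations of $\mathcal{G}$ have coefficients in $\Fq \subseteq K$ and $\sigma$ fixes $K$ pointwise, also $\sigma(Y) \in \mathcal{G}(L)$; the product $Y^{-1}\sigma(Y)$ then automatically lies in $\mathcal{G}(L)$, and we are done.

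So the real task is to produce a fundamental solution matrix $Y \in \mathcal{G}(L)$. For this I would solve the defining equation $D\phiq(Y)=Y$, equivalently $Y\phiq(Y)^{-1}=D$, inside $\mathcal{G}(\overline{K})$ using Lang--Steinberg. The Lang map $\mathcal{L}\colon \mathcal{G} \to \mathcal{G}$, $g \mapsto g\phiq(g)^{-1}$, is a morphism of $\Fq$-varieties; at the identity its differential equals $-\mathrm{id}$ (because the Frobenius has zero derivative), hence $\mathcal{L}$ is \'etale, and for connected $\mathcal{G}$ over an algebraically closed field this forces $\mathcal{L}$ to be surjective on geometric points. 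So some $Y \in \mathcal{G}(\overline{K})$ satisfies $\mathcal{L}(Y)=D$. The $n$ columns of this $Y$ are then $\Fq$-linearly independent solutions of the Frobenius equation, so by the dimension bound~(\ref{eqn:dim of solution space}) they span the whole solution space of $M$ over $\overline{K}$; but that solution space already has $\Fq$-dimension $n$ realized inside $L$ (since $M$ is trivial over its solution field), so all entries of $Y$ must lie in $L$ and therefore $Y \in \mathcal{G}(L)$, as required.

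The main obstacle I foresee is the appeal to Lang--Steinberg over $\overline{K}$ rather than the classical setting $\overline{\Fq}$. The textbook statement concerns a geometric Frobenius on a connected algebraic group over $\overline{\Fq}$, but inspection of the proof reveals that it only uses three ingredients: an algebraically closed base field, an endomorphism with vanishing differential, and a smooth connected algebraic group. All three are present here: $\overline{K}$ is algebraically closed, $\phiq$ is literally the $q$-th power map (hence has zero differential), and the reducedness hypothesis on $\mathcal{G}$ guarantees smoothness in characteristic~$p$, since a reduced group scheme of finite type over a perfect field is automatically smooth. With these in place the standard \emph{\'etale plus connected implies surjective on geometric points} argument goes through unchanged, completing the proof.
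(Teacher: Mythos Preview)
Your argument is correct and is exactly the standard Lang--Steinberg approach; the paper itself gives no proof here (it merely cites \cite{Matzat}, Theorem~4.3), but the very same argument appears in the paper's proof of the twisted generalization, Theorem~\ref{thm:twupperbound}, of which the present statement is the special case $F=\phiq$, $s=1$, $\zwmat=D$. One inconsequential slip: for your map $\mathcal{L}(g)=g\,\phiq(g)^{-1}$ the differential at the identity is $+\mathrm{id}$, not $-\mathrm{id}$ (you may be thinking of the variant $g\mapsto g^{-1}\phiq(g)$); either way the differential is invertible and the rest of your argument goes through unchanged.
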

\begin{proof}
  \cite{Matzat}, Theorem~4.3.
\end{proof}
\noindent
This theorem is appropriate for the untwisted groups.
Since in the twisted cases $\G^F$ is smaller than $\G(\Fq)$ we also need the following variant.
\begin{thm}\label{thm:twupperbound}
Let $\mathcal G$ be a linear algebraic group defined over $\Fq$ with Frobenius map $F: \mathcal G\rightarrow \mathcal G$, such that $F^s=\phi_q$.
Let further $B$ be a basis of M such that
\begin{center}
$D_B(\Phi)=\zwmat\cdot F(\zwmat)\cdots F^{s-1}(\zwmat)$ \end{center}
for some $\zwmat \in \G(K)$. Then $\Gal^\Phi(M) \leq \mathcal G^F$.
\end{thm}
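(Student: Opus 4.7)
The plan is to combine Matzat's upper bound (Theorem~\ref{thm:upperbound}) with a Lang--Steinberg conjugation argument that sharpens the containment $\G(\Fq)$ to $\G^F$. The guiding idea is to exploit the factorisation $D = D_0 \cdot F(D_0) \cdots F^{s-1}(D_0)$ to show that, after a single conjugation, the linear representation $\Gamma$ from Proposition~\ref{prop:galois linrep} is $F$-equivariant.

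First I would observe that $D_0\in\G(K)$ together with the fact that $F$ is an $\Fq$-morphism of $\G$ implies $F^i(D_0)\in\G(K)$ for all $i$, so $D\in\G(K)$. Applying (the proof of) Theorem~\ref{thm:upperbound} then furnishes a fundamental solution matrix $Y\in\G(L)$ together with the embedding $\Gamma\colon\Gal^\Phi(M)\hookrightarrow\G(\Fq)$, $\sigma\mapsto Y^{-1}\sigma(Y)$.

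Next I would verify that $Z\coloneqq D_0\cdot F(Y)$ is itself a fundamental solution matrix of $\Phi$. Using $\phiq(Y)=D^{-1}Y$ and the fact that $F$ and $\phiq$ commute one computes
\[
\Phi(Z)=D\cdot\phiq(D_0)\cdot F(D)^{-1}\cdot F(Y),
\]
so $\Phi(Z)=Z$ reduces to $D\cdot\phiq(D_0)=D_0\cdot F(D)$; substituting the factorisation of $D$ makes the middle factors telescope and collapses the identity to $F^s(D_0)=\phiq(D_0)$, which is nothing but the hypothesis $F^s=\phiq$. Hence $Z=Yc$ for a unique $c\in\GL_n(\Fq)$, and since $Y,Z\in\G(L)$ one obtains $c\in\G(L)\cap\GL_n(\Fq)=\G(\Fq)$. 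Now applying any $\sigma\in\Gal^\Phi(M)$ to $D_0 F(Y)=Yc$ — using that $D_0$ and $c$ are $\sigma$-invariant, that $F$ commutes with $\sigma$ since $F$ is defined over $\Fq\subseteq K$, and that $\sigma(Y)=Y\Gamma(\sigma)$ — yields the key relation
\[
F(\Gamma(\sigma))=c^{-1}\,\Gamma(\sigma)\,c\qquad\text{for all }\sigma\in\Gal^\Phi(M).
\]

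The final step is a Lang--Steinberg argument: since $\G$ is connected and $F$ is a Frobenius endomorphism, there exists $h\in\G(\overline{\Fq})$ with $c=h^{-1}F(h)$. The conjugate representation $\Gamma'(\sigma)\coloneqq h\,\Gamma(\sigma)\,h^{-1}$ is still an injective homomorphism from $\Gal^\Phi(M)$ and satisfies
\[
F(\Gamma'(\sigma))=F(h)\,F(\Gamma(\sigma))\,F(h)^{-1}=hc\cdot c^{-1}\Gamma(\sigma)c\cdot c^{-1}h^{-1}=\Gamma'(\sigma),
\]
so its image lies inside $\G^F$, giving $\Gal^\Phi(M)\le\G^F$ as abstract groups.

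I expect the main subtlety to be the very first step: ensuring that a fundamental solution matrix can indeed be chosen in $\G(L)$ rather than merely in $\GL_n(L)$. This refinement — implicit in the proof of Theorem~\ref{thm:upperbound} but not in its statement — is what legitimises $c\in\G$ and keeps the subsequent Lang--Steinberg conjugation inside $\G$. Once this is in hand, the telescoping identity $F^s(D_0)=\phiq(D_0)$ and the invocation of Lang's theorem for the connected group $\G$ are essentially routine.
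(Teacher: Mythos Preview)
Your argument is correct, but it takes a noticeably different route from the paper's. The paper applies Lang--Steinberg \emph{at the outset}, over $\overline{K}$: it chooses $Y\in\G(\overline{K})$ with $F(Y)=D_0^{-1}Y$, then verifies by a telescoping computation that $D\cdot\phi_q(Y)=Y$, so this particular $Y$ is already a fundamental solution matrix. With this choice one gets $F(Y^{-1}\sigma(Y))=Y^{-1}\sigma(Y)$ in one line, since $F(Y)=D_0^{-1}Y$ and $D_0$ is $\sigma$-invariant. No obstruction $c$ ever appears and no second conjugation is needed.

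Your approach instead starts from an arbitrary solution matrix furnished by Theorem~\ref{thm:upperbound}, measures its failure to be $F$-equivariant by the element $c\in\G(\Fq)$, and only then invokes Lang--Steinberg (over $\overline{\Fq}$) to conjugate $c$ away. This is longer but perfectly sound; it has the mild conceptual advantage that Lang--Steinberg is applied only to the finite-field situation and that the role of the factorisation $D=D_0F(D_0)\cdots F^{s-1}(D_0)$ is isolated in the single identity $D\cdot\phi_q(D_0)=D_0\cdot F(D)$. The paper's proof buys brevity by building the $F$-compatibility into $Y$ from the start, at the cost of invoking Lang--Steinberg over the larger field $\overline{K}$. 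Your own diagnosis of the main subtlety---that one needs $Y\in\G(L)$ rather than merely $\GL_n(L)$---is accurate and is precisely the point the paper sidesteps by constructing $Y$ inside $\G$ via Lang--Steinberg directly.
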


\begin{proof}
By the Lang-Steinberg theorem\footnote{See \cite[§1.17]{Carter85}.} there exists an element $Y \in \G(\overline{K})$ such that $F(Y)=\zwmat^{-1}\cdot Y$. Hence
\begin{eqnarray*}
D\cdot \phi_q(Y)&=&D\cdot F^s(Y) \;\;=\;\; D \cdot F^{s-1}(\zwmat^{-1})\cdot F^{s-1}(Y)\\
&=& \cdots \;\;=\;\; D\cdot F^{s-1}(\zwmat^{-1})\cdots F(\zwmat^{-1}) \cdot \zwmat^{-1} \cdot Y
\\
&=&D\cdot \big(\zwmat F(\zwmat) \cdots F^{s-1}(\zwmat)\big)^{-1} \cdot Y \;\;=\;\; Y.
\end{eqnarray*}
Thus the field extension $N$ generated by the coefficients of $Y$ over $K$ is a solution field for $(M,\Phi)$.
We embed $\Gal^\Phi (M) \rightarrow \G(\Fq)$, $\sigma \mapsto Y^{-1} \sigma (Y)$ as in Prop.~\ref{prop:galois linrep}.
For $\sigma \in \Gal^\Phi (M)=\Gal(N/K)$, we have
\begin{center}
$F(Y^{-1}\cdot \sigma(Y))=F(Y)^{-1}\cdot F(\sigma (Y)) =F(Y)^{-1} \cdot \sigma (F(Y))$, 
\end{center}
since $F$ is given polynomially (over $\Fq$) in every coordinate and thus commutes with $\sigma$.
Plugging in $F(Y)=\zwmat^{-1}\cdot Y$ and using that the coordinates of $\zwmat$ lie in the fixed field of $\Gal^\Phi (M)$, we see that the last term equals $Y^{-1} \cdot \sigma(Y)$.
Hence $F(Y^{-1}\cdot \sigma(Y))=Y^{-1}\cdot \sigma(Y)$ for all $\sigma \in \Gal^\Phi (M)$.
\end{proof}

\noindent
Finding lower bounds for the Galois group of a Frobenius module is more difficult.
We present a special case of Thm.~4.5 in~\cite{Matzat} which can be regarded as a ``linear variant'' of Dedekind's classical criterion.

We use the following notation.
Let $R$ be a subring of~$\grdfield$ and $\mfrak \ideal R$ a maximal ideal.
By Chevalley's theorem\footnote{See \cite[Thm.\,3.1.1]{EnglerPrestel}.} there exists a valuation ring~$\Ocal$ of~$\grdfield$ such that $R \subseteq \Ocal$ and $\Pcal \cap R = \mfrak$, where $\Pcal$ is the unique maximal ideal of~$\Ocal$.
We denote by $E=\Ocal/\Pcal$ the residue field and assume that this is a finite extension of~$\Fq$ of degree $f \coloneqq [E:\Fq]$. We let $\psi\colon \Ocal \to E$ be the canonical projection.
\begin{thm}[Matzat]\label{thm:lowerbound}
Assume that $(M,\Phi)$ is an $n$-dimensional Frobenius module over $(\grdfield,\phi_q)$ such that $D=D_B(\Phi)$ lies in $\GL_n(\Ocal)$ for some basis $B$ of M.
Denote by $D_{\Pcal} = \psi(D) \in \GL_n(E)$ the residue matrix of~$D$.
Then
\renewcommand{\labelenumi}{(\roman{enumi})}%
\begin{enumerate}
  \item $\Gal^\Phi(M) \leq \GL_n(\Fq)$ contains an element which is conjugate to
    $$\widehat{D} \coloneqq D_{\Pcal} \cdot \phiq(D_{\Pcal}) \cdot \ldots \cdot \phiq^{f-1}(D_{\Pcal})$$
    inside $\GL_n(\Fqbar)$.
  \item If furthermore~$\G$ is a connected linear algebraic group defined over $\Fq$ and $D$ lies in $\G(\Ocal)$ then $\Gal^\Phi(M) \leq \GL_n(\Fq)$ contains an element which is $\G(\Fqbar)$-conjugate to $\widehat{D} \in \G(\Fqbar)$.
\end{enumerate}
\end{thm}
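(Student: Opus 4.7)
The plan is to reduce the Frobenius module modulo $\Pcal$ and exploit the fact that over the finite residue field $E$, the Galois theory of a Frobenius module is forced by the action of Frobenius. Extend $\Ocal$ to a valuation ring $\widetilde{\Ocal}$ of the (finite Galois) solution field $L/K$ lying above $\Ocal$, with maximal ideal $\widetilde{\Pcal}$ and residue field $\widetilde{E} \supseteq E$. Since $D \in \GL_n(\Ocal)$, I would argue that a fundamental solution matrix $Y$ (characterized by $D\,\phiq(Y)=Y$) can be chosen with entries in $\widetilde{\Ocal}$---this is the main integrality input. Its reduction $\bar{Y} \coloneqq \psi(Y) \in \GL_n(\widetilde{E})$ then satisfies $D_{\Pcal}\,\phiq(\bar{Y}) = \bar{Y}$, making $\bar{Y}$ a fundamental solution matrix for the reduced Frobenius module over $E$.

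The residue extension $\widetilde{E}/E$ is cyclic Galois (as an extension of finite fields), and the decomposition group of $\widetilde{\Pcal}$ inside $\GalPhi(M) = \Gal(L/K)$ surjects onto $\Gal(\widetilde{E}/E)$; choose $\hat{\sigma}$ in this decomposition group whose restriction to $\widetilde{E}$ is the inverse Frobenius $\phiq^{-f}$. Iterating the defining relation gives $\bar{Y} = \widehat{D}\,\phiq^f(\bar{Y})$, and applying $\phiq^{-f}$ (which fixes $\widehat{D}$ since $\widehat{D} \in \GL_n(E)$) yields $\phiq^{-f}(\bar{Y}) = \widehat{D}\,\bar{Y}$. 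Under the faithful representation $\Gamma\colon\sigma\mapsto Y^{-1}\sigma(Y)$ of Prop.~\ref{prop:galois linrep}, the element $\Gamma(\hat{\sigma}) \in \GL_n(\Fq)$ reduces modulo $\widetilde{\Pcal}$ to $\bar{Y}^{-1}\widehat{D}\,\bar{Y}$, so it is $\GL_n(\Fqbar)$-conjugate to $\widehat{D}$. This proves (i).

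For (ii) the only additional input is that $Y$ can be chosen inside $\G(\widetilde{\Ocal})$ rather than just $\GL_n(\widetilde{\Ocal})$. Since $\G$ is connected and $D \in \G(\Ocal)$, a Lang-Steinberg argument inside $\G$---in the same spirit as the proof of Thm.~\ref{thm:twupperbound}---produces a solution matrix over $\G(\Fqbar)$ that can be arranged to lie in $\G(\widetilde{\Ocal})$. Then $\bar{Y} \in \G(\widetilde{E}) \subseteq \G(\Fqbar)$, and the conjugation $\Gamma(\hat{\sigma}) = \bar{Y}^{-1}\widehat{D}\,\bar{Y}$ takes place inside $\G(\Fqbar)$, giving the refinement.

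The main obstacle is the integrality step: ensuring that a fundamental solution matrix $Y$ can be chosen with entries in $\widetilde{\Ocal}$, and for~(ii) inside $\G(\widetilde{\Ocal})$. This is precisely where the hypotheses $D \in \GL_n(\Ocal)$ (respectively $D \in \G(\Ocal)$) and the connectedness of $\G$ are used crucially, via Hensel-type lifting together with Lang-Steinberg. Once this is in place, the surjection of the decomposition group onto $\Gal(\widetilde{E}/E)$ and the explicit computation of $\Gamma(\hat{\sigma})$ are routine ingredients from valuation theory and direct matrix arithmetic.
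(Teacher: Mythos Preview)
The paper does not prove this theorem: it is stated with attribution to Matzat and referenced as a special case of \cite[Thm.~4.5]{Matzat}, with no proof given in the present paper. So there is no ``paper's own proof'' to compare against; what follows is an assessment of your sketch on its own merits.

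Your outline is the standard and correct strategy: reduce modulo~$\Pcal$, lift a Frobenius generator of the residue extension to the decomposition group, and read off its image under $\Gamma$ via the reduced fundamental matrix. The computation $\bar Y=\widehat{D}\,\phiq^{f}(\bar Y)\Rightarrow \phiq^{-f}(\bar Y)=\widehat{D}\,\bar Y$ is right, and since $\Gamma(\hat\sigma)\in\GL_n(\Fq)$ is unchanged by reduction, the identity $\Gamma(\hat\sigma)=\bar Y^{-1}\widehat{D}\,\bar Y$ holds on the nose (not merely after reduction), giving the conjugacy in $\GL_n(\Fqbar)$, and in $\G(\Fqbar)$ once $\bar Y\in\G$.

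The one place where your sketch is genuinely incomplete is the integrality step you yourself flag. Two points deserve more care. First, you need $Y\in\GL_n(\widetilde{\Ocal})$, not just $Y\in M_n(\widetilde{\Ocal})$: otherwise $\bar Y$ may be singular and the conjugation is vacuous. Once $Y$ has integral entries this follows from $\det(Y)^{q-1}=\det(D)^{-1}\in\Ocal^\times$ together with torsion-freeness of the value group, but it should be said. Second, ``Hensel-type lifting together with Lang--Steinberg'' is the right slogan, but the argument is not identical to that of Thm.~\ref{thm:twupperbound}: there one works over an algebraically closed field, whereas here one must produce $Y$ over the (Henselisation or completion of the) valuation ring~$\widetilde{\Ocal}$, and then argue that its entries already lie in the finite extension~$L$. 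In Matzat's treatment this is handled by first solving the reduced equation over~$\Fqbar$ and then lifting via successive approximation; your sketch would be complete once this is made explicit.
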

\renewcommand{\labelenumi}{\alph{enumi})}

\noindent
We only need this theorem in the special case where $\grdfield=\Fq(t_1,\ldots,t_k)$ and $R=\Fq\left[t_1,\ldots,t_k\right]_\mfrak$ is the localization of the polynomial ring in $k$ variables at the maximal ideal $(t_1-a_1,\ldots,t_k-a_k)$, where $a_i \in \Fq$.
Consider the rank-$k$ valuation on $F$ (with $\mathbb{Z} \times \dots \times \mathbb{Z}$ ordered lexicographically) which maps an element in $F$ to its $(t_1-\alpha_1)$-adic valuation $n_1$ in the first component, to the $(t_2-\alpha_2)$-adic valuation of $\overline{f\cdot(t_1-\alpha_1)^{-n_1}}$ in the second component, and so on (where $\overline{f\cdot(t_1-\alpha_1)^{-n_1}}$ denotes the residue of $f\cdot(t_1-\alpha_1)^{-n_1}$ modulo $(t_1-\alpha_1)$, which is contained in $\Fq(t_2, \dots, t_k)$). 
Let $\Ocal$ and $\Pcal$ be the corresponding valuation ring and valuation ideal. Then we have $\Ocal/ \Pcal=\Fq$, by construction. 
Thus $f=1$ (hence~$E=\Fq$) and $\psi$ restricted to $R$ is the homomorphism induced by the specializations \mbox{$t_i\mapsto a_i$}.
Thus in this case the theorem asserts that $\GalPhi(M)$ contains a matrix which is conjugate to the specialization $\widehat{D} = \psi(D)$ if $D$ is contained in $\GL_n(R)$.

\bigskip
\noindent
We also need the following slightly technical lemma to compute polynomials with the desired Galois groups.
\begin{lemma} \label{lem:galoisgroup}
Let $(M, \Phi)$ be an $n$-dimensional Frobenius module over $\grdfield$ with representing matrix $D$. Let further $f$ be a polynomial of degree~$q^n$. Suppose that any solution $x=(x_1,\ldots,x_n)$ to $D \cdot \phi_q(x)=x$ is uniquely determined by its first coordinate~$x_1$, all other coordinates of~$x$ lie in the field extension generated by~$x_1$, and $f(x_1)=0$. Then $\GalPhi(M) \cong \Gal_{\grdfield}(f)$.
\end{lemma}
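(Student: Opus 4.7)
The plan is to show that the splitting field of $f$ over $\grdfield$ coincides with the solution field $\extfield$ of $(M,\Phi)$; once we have that equality of fields, the isomorphism of Galois groups is tautological. The whole argument is essentially a counting check followed by two containments.

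First I would pass to the solution field $\extfield$, where $\SolPhi_\extfield(M)$ is by definition an $\Fq$-vector space of dimension $n$, and hence contains exactly $q^n$ distinct solution vectors $x = (x_1, \ldots, x_n)$. By the first hypothesis, the projection $x \mapsto x_1$ is injective on $\SolPhi_\extfield(M)$, so we obtain $q^n$ pairwise distinct elements of $\extfield$, each of which is a root of $f$. Since $\deg f = q^n$, this forces $f$ to be separable and to split completely over $\extfield$, with these $q^n$ first coordinates as its complete set of roots. In particular the splitting field of $f$ over $\grdfield$ is contained in $\extfield$.

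For the reverse inclusion I would fix an $\Fq$-basis $v^{(1)}, \ldots, v^{(n)}$ of $\SolPhi_\extfield(M)$ and apply the second hypothesis: all coordinates of $v^{(i)}$ lie in $\grdfield\bigl(v^{(i)}_1\bigr)$, which in turn lies inside the splitting field of $f$ since $v^{(i)}_1$ is a root. Because $\extfield$ is generated over $\grdfield$ by the coordinates of any fundamental system of solutions, $\extfield$ is contained in the splitting field of $f$. The two inclusions combine to give $\extfield = \grdfield(\text{roots of }f)$, whence $\GalPhi(M) = \Gal(\extfield/\grdfield) \cong \Gal_\grdfield(f)$.

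The only step that needs any thought is the counting argument: the combination of injectivity of $x \mapsto x_1$ with the exact degree condition $\deg f = q^n$ is what forces $f$ to split completely over $\extfield$ and to be separable. Once that is established, the identification of the two fields and hence of their Galois groups is purely formal.
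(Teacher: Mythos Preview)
Your proof is correct and follows essentially the same approach as the paper: establish the two containments between the solution field $\extfield$ and the splitting field of $f$ by combining the counting argument (injectivity of $x\mapsto x_1$ on the $q^n$ solutions together with $\deg f=q^n$ forces $f$ to split in $\extfield$) with the hypothesis that every coordinate lies in $\grdfield(x_1)$. The paper presents the two inclusions in the opposite order and more tersely, but the content is identical.
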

\begin{proof}
The solution field $\extfield$ of $M$ is generated by the coordinates of all solutions to $D \cdot \phi_q(x)=x$, so it is contained in the splitting field of $f$. On the other hand, $M$ is trivial over $\extfield$, so there are $q^n$ distinct solutions, hence also $q^n$ distinct first coordinates of solutions, all of which are roots of $f$. But $\deg(f)=q^n$, so every root of $f$ occurs as first coordinate of a solution. This implies that the splitting field of $f$ is contained in $\extfield$, thus the Galois groups coincide.
\end{proof}

\subsection{Steinberg's Cross-Section}
\label{sec:steinberg}

We wish to construct a Frobenius module $(M,\Phi)$ such that $\Gal^\Phi(M)\cong \G^F$ is a given finite group of Lie type. Having at hand Theorem~\ref{thm:lowerbound} to obtain lower bounds, we need to construct a representing matrix $D=D_B(\Phi)$ such that any set of conjugates to specializations $\psi_i(D)$ generates $\G^F$.
So we need $D$ to be of a certain generality, as well as of simple form in order to be able to obtain polynomials with this Galois group.
Our construction is based on the following theorem \cite[Thm.\;1.4]{Steinberg}.
\begin{thm}[Steinberg]\label{thm:cross-section}
 Let $\G$ be a connected semisimple linear algebraic group of rank $r$. Let $T$ be a maximal torus in $\G$ and $\left\{\alpha_i \ | \ 1\leq i \leq r\right\}$ a system of simple roots relative to $T$. For each $i$ let $X_i$ and $w_i$ be the root subgroup and Weyl group representative related to $\alpha_i$, respectively. Let
\begin{equation}
  \pcs = \prod\limits_{i\leq r}X_iw_i =X_1w_1X_2w_2 \dots X_rw_r.
\label{eqn:cross-section}
\end{equation}
If $\G$ is a simply-connected group, then $\pcs$ is a \emph{cross section} (i.e., a system of representatives) of the collection of regular conjugacy classes of $\G$.
\end{thm}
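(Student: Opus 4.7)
The plan is to follow Steinberg's original strategy, which rests on three pillars: Chevalley's structure theorem for invariants on a simply-connected semisimple group, an explicit computation of fundamental characters on $\pcs$, and the characterization of regular elements via the adjoint quotient. Throughout, I write $x_{\alpha_i}(t)$ for the canonical isomorphism $\mathbb{A}^1 \to X_i$.

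First, I would invoke Chevalley's theorem: for $\G$ simply-connected semisimple of rank $r$, the ring of conjugation-invariant regular functions on $\G$ is a polynomial ring in $r$ generators $\chi_1,\dots,\chi_r$, which can be chosen to be the traces of the $r$ fundamental representations $V_{\omega_1},\dots,V_{\omega_r}$. This yields the adjoint quotient morphism $\chi=(\chi_1,\dots,\chi_r)\colon \G \to \mathbb{A}^r$. A separate result of Steinberg (in the same paper) shows that every fiber of $\chi$ contains exactly one regular conjugacy class, so proving the theorem amounts to showing that $\chi$ restricted to $\pcs$ is a bijection onto $\mathbb{A}^r$ and that every element of $\pcs$ is regular.

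Second, I would parametrize $\pcs$ by the morphism
\begin{equation*}
\mu\colon \mathbb{A}^r \longrightarrow \pcs,\quad (t_1,\dots,t_r)\mapsto x_{\alpha_1}(t_1)\,w_1\,x_{\alpha_2}(t_2)\,w_2\cdots x_{\alpha_r}(t_r)\,w_r,
\end{equation*}
which is a bijective morphism onto $\pcs$ (this is essentially a Bruhat-decomposition uniqueness argument). The decisive step is to analyze the composition $\chi\circ\mu\colon \mathbb{A}^r\to\mathbb{A}^r$. Using a highest-weight vector $v^+$ of $V_{\omega_j}$ and the fact that $w_1 w_2 \cdots w_r$ is a Coxeter element, one shows that the coefficient of a carefully chosen weight vector in $\mu(t)\cdot v^+$ is, up to a unit, equal to $t_j$ plus a polynomial in $t_{j+1},\dots,t_r$ only. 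This translates into a triangular structure
\begin{equation*}
\chi_j\bigl(\mu(t_1,\dots,t_r)\bigr) \;=\; c_j\, t_j \;+\; p_j(t_{j+1},\dots,t_r),\qquad c_j\in\Fq^\times,
\end{equation*}
so that $\chi\circ\mu$ has constant invertible Jacobian and is therefore an isomorphism of varieties.

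Third, the regularity claim drops out of the same computation: an element $g\in\G$ is regular precisely when the differential $d\chi_g$ has maximal rank $r$, and the triangularity above forces $d(\chi\circ\mu)_t$ to be invertible at every $t\in\mathbb{A}^r$. Hence every element of $\pcs$ is regular, and since $\chi|_\pcs$ bijects onto the base of the adjoint quotient while each fiber of $\chi$ supports a unique regular class, $\pcs$ meets each regular class in exactly one point, completing the proof.

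The main obstacle is the triangular computation in the second step: controlling how $w_1\cdots w_r$ moves the highest weights of the fundamental representations and identifying the correct weight vector whose coefficient in $\mu(t)\cdot v^+$ yields the monomial $t_j$ requires genuine root-system combinatorics and detailed knowledge of the structure constants of the Chevalley relations. This is where the bulk of Steinberg's original argument lies; steps one and three are comparatively formal once it is in hand.
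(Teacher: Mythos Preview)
The paper does not prove this theorem at all; it merely cites it as \cite[Thm.\;1.4]{Steinberg} and moves on. Your outline is a faithful sketch of Steinberg's original argument from that reference: the Chevalley adjoint quotient $\chi$, the triangularity of $\chi\circ\mu$ on the Coxeter slice, and the regularity criterion via the differential of $\chi$ are exactly the ingredients Steinberg uses. One small slip: the nonzero constants $c_j$ live in the (algebraically closed) base field, not in $\Fq^\times$, and the triangularity is with respect to a suitable ordering of the fundamental weights rather than the naive index order; but these are cosmetic and do not affect the strategy.
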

\noindent
Since we are only interested in specializations of~$D$ up to conjugation, this theorem promises to be a powerful tool for our purposes.
Note that not all groups considered in this work are simply connected (e.g., $\SO_{2k}^+$ is not).
Moreover, for some of the twisted groups the construction needs to be adapted due to issues with rationality.
But~$\pcs$ as defined above is still a good starting point in all cases; details will be discussed in the corresponding sections.
Note that in our applications we always consider the root subgroups and Weyl group representatives with respect to the standard torus~$\T_0 \leq \G$ of diagonal matrices and the standard Borel subgroup~$\B_0 \leq \G$ of upper triangular matrices.

\subsection{Strong Generating Systems}
\label{sec:generators}

In order to apply Theorem~\ref{thm:lowerbound} to the groups of our interest, we need generators with the property that any set of $\GL_n(\Fq)$-conjugates of these are generators, too.
Such elements will be called \emph{strong generators}.
In the joint work~\cite{MalleSaxlWeigel} of Malle, Saxl and Weigel it is proved as an intermediate result that for every classical group $G$, any two regular semisimple elements lying in certain maximal tori of $G$ form a set of strong generators (for the regularity condition see the end of this section).
Based on these results, we present pairs of regular strong generators for all classical groups in Table~\ref{table:generators} (the exceptional groups will be dealt with in the corresponding sections).
Before explaining the notation used in the table, we recall some basic facts about maximal tori in finite groups of Lie type.
For details and proofs see~\cite[§3.3]{Carter85}.

Let $\G$ be a linear algebraic group of classical type (of which we think in its natural representation) and $F$ a Frobenius morphism on $\G$.
A maximal torus $T$ of the corresponding \emph{finite} classical group $G=\G^F$ is a subgroup which consists of the $F$-fixed points of an $F$-stable maximal torus of $\G$.
Recall that these tori can be classified via elements of the Weyl group $\W = \Nor{\G}{\T_0} / \Cen{\G}{\T_0}$ as follows.
Let $w \in \Nor{\G}{\T_0}$ be any Weyl group representative (we can choose $w$ as a permutation matrix and write $\wperm \in \Sym_n$ for the associated permutation).
Moreover, choose $h \in \G$ such that $h \cdot F(h)^{-1} = w$, which is possible by the Lang-Steinberg theorem.
Then~$\T = \T_0^{h}$ is another $F$-stable maximal torus, and all such tori can be obtained in this way.
This leads to a classification of all $F$-stable maximal tori of~$\G$.
Now let $T \coloneqq \T^F = (\T_0^{h})^F$ be the corresponding maximal torus of the finite group~$G$.
Its elements can be characterized as follows.
\begin{equation}
  \label{eqn:tor_mat_relation}
  T = h^{-1} \; \big\{ t_0 \in \T_0 \;\big|\; F(t_0) = w^{-1} \cdot t_0 \cdot w \big\} \; h.
\end{equation}
That is, $T$ is conjugate in $\GL_n(\Fqbar)$ to a group of diagonal matrices which satisfy a certain relation involving the permutation matrix~$w$.
We are going to make use of this fact below.

We now explain the conventions used in Table~\ref{table:generators}.
The various groups are distinguished by their Lie type (one out of $A$, $B$, $C$, $D$).
The rank of~$\G$ is always denoted~$k$, whereas $n$ denotes the dimension of the natural representation (the relation between both is indicated for each type).
Each row lists the information for one strong generator~$\genname \in G$.
The columns contain, from left to right:
\begin{itemize}
  \item[-] the order of the maximal torus $T \leq G$ containing $\genname$,
  \item[-] the permutation~$\wperm$ associated to the classifying Weyl group element~$w$ for~$T$, and
  \item[-] the diagonalization $\gendiag$ of $\genname$ (which exists since $\genname$ is semisimple); note that we do not print $\genname$ itself because the diagonalization can be written much more concisely while still containing all necessary information.
\end{itemize}
Concerning the last two items, some further comments are in order.
Firstly, all permutations are considered as elements of~$\Sym_n$.
We use the following notation for cycles in permutations.
For $1 \leq r_1 < r_2 \leq n$, we define
\begin{align*}
  \cycle{r_1}{r_2} &\coloneqq (r_1, r_1 \plus  1, \dots, r_2), \qquad
  \cycle{r_2}{r_1} \coloneqq (r_2, r_2 \minus 1, \dots, r_1).\\
\intertext{Similarly, for $1 \leq r_1 < r_2 < r_3 < r_4 \leq n$:}
  \cyclelong{r_1}{r_2}{r_4}{r_3} &\coloneqq (r_1, r_1 \plus  1, \dots, r_2,\;\; r_4, r_4 \minus 1, \dots, r_3),\\
  \cyclelong{r_2}{r_1}{r_3}{r_4} &\coloneqq (r_2, r_2 \minus  1, \dots, r_1,\;\; r_3, r_3 \plus 1, \dots, r_4).
\end{align*}
We also abbreviate two special permutations which occur more than once:
\begin{equation*}
  \tau_k \coloneqq (k,k \plus 1) \;\in \Sym_n, \qquad\qquad \tau \coloneqq (1,n) (2, n \minus 1) (3, n \minus 2) \cdots \;\in \Sym_n.
\end{equation*}
Secondly, all the diagonal entries in $\gendiag$ are $q^i$-th powers of a single ``fundamental diagonal entry'' $\alpha$, and in some cases of a second one $\beta$.%
\footnote{In the case where $F=\phiq$ is the standard Frobenius endomorphism this is easily derived from the relation $w^{-1} \cdot \gendiag \cdot w = F(\gendiag)$ implied by~(\ref{eqn:tor_mat_relation}), which shows that the $q^i$-th powers of~$\alpha$ and $\beta$ are permuted precisely in the way prescribed by~$\wperm$.}
These elements can be characterized as follows.
If there is only one fundamental entry~$\alpha$ then $\ord(\alpha)=|T|$.
Otherwise, the orders of $\alpha$ and $\beta$ equal the two factors into which $|T|$ naturally split, with $\ord(\alpha) \geq \ord(\beta)$ (for example, in the case of the second torus of $\SO_{2k}^-(q)$ we have $\ord(\alpha)=q^{k-1}+1$ and $\ord(\beta)=q-1$).
Note that any elements~$\alpha$, $\beta$ satisfying these constraints yield strong generators.

Since the block diagonal matrices occurring for~$\gendiag$ are quite large, we use the following abbreviations.
\begin{align*}
\bdiag{k}{\alpha}   &= \diag(\alpha, \alpha^q, \;\dots\;, \alpha^{q^{k-1}}),\\
\bdiaginv{k}{\alpha} &= \diag(\alpha^{-q^{k-1}}, \;\dots\;, \alpha^{-q}, \alpha^{-1} ),\\
\bdiagtilde{k}{\alpha}   &= \diag\big(\alpha, \alpha^{-q}, \alpha^{q^2}, \alpha^{-q^3} \;\dots\;, \alpha^{(-q)^{k-1}}\big).
\end{align*}
Thus $\bdiag{k}{\alpha}$ contains successive $q^i$-th powers of~$\alpha$, whereas $\bdiaginv{k}{\alpha}$ contains successive $q^i$-th powers of~$\alpha^{-1}$, in reverse order.\footnote{Think of the symbols as ``delta'' (for diagonal) and ``delta reverse''.}
The third kind of matrix $\bdiagtilde{k}{\alpha}$ is of similar shape as the first one but with alternating signs in the exponents (adapted for use with the special unitary groups).
Block diagonal matrices are notated using square brackets around the individual blocks, where single elements are to be interpreted as blocks of size~1.%

\newcommand{\grouptype}[2]{\begin{minipage}[t]{2cm}#1\\ {\footnotesize #2}\end{minipage}}
\begin{sidewaystable}
\centering
\begin{tabular}{lccc}
Group & $|T|$ & $w$ & Diagonalized generator \\[1ex]
\hline
%%
%% SL_n
%%
\grouptype{$\SL_n(q)$}{$(n=k \plus 1)$}\vphantom{\Big(}
 & $\frac{q^n-1}{q-1}$     & \cycle{n}{1}   & \bdiag{n}{\alpha} \\[1ex]
 & $q^k \minus 1$ & \cycle{k}{1} & $\bdiag{n}{\alpha}$ \\[1em]
%%
%% Sp_{2k}
%%
\grouptype{$\Sp_n(q)$}{$(n=2k)$}
 & $q^k \minus 1$ & $\cycle{k}{1} \cdot \cycle{(k+1)}{(2k)}$ & $\blockdiag{\bdiag{k}{\alpha}, \bdiaginv{k}{\alpha}}$ \\[1em]
 & $q^k \plus  1$ & \cyclelong{k}{1}{k+1}{2k} & $\blockdiag{\bdiag{k}{\alpha}, \bdiaginv{k}{\alpha}}$ \\[1em]
%%
%% SO_{2k+1}
%%
\grouptype{$\SO_n(q)$}{$(n=2k+1)$}
 & $q^k \minus 1$ & $\cycle{k}{1} \cdot \cycle{(k+2)}{(2k+1)}$ & $\blockdiag{\bdiag{k}{\alpha}, 1, \bdiaginv{k}{\alpha}}$ \\[1em]
 & $q^k \plus  1$ & \cyclelong{k}{1}{k+2}{2k+1} & $\blockdiag{\bdiag{k}{\alpha}, 1, \bdiaginv{k}{\alpha}}$ \\[1em]
%%
%% SO_{2k}^+ (k odd)
%%
\grouptype{$\SO_n^+(q)$}{($n=2k$, $k$ odd)}
 & $q^{k} \minus 1$ & $\cycle{k}{1} \cdot \cycle{k+1}{2k}$ & $\blockdiag{\bdiag{k}{\alpha}, \bdiaginv{k}{\alpha}}$ \\[1ex]
 & $(q^{k-1} \plus  1)(q \plus 1)$ & $\tau_k \cdot \cyclelong{k-1}{1}{k+2}{2k}$ & $\blockdiag{\bdiag{k-1}{\alpha}, \beta, \beta^{-1}, \bdiaginv{k-1}{\alpha}}$ \\[1em]
%%
%% SO_{2k}^+ (k even)
%%
\grouptype{$\SO_n^+(q)$}{($n=2k=4m$)}
 & $(q^{k-1} \plus  1)(q \plus 1)$ & $\tau_k \cdot \cyclelong{k-1}{1}{k+2}{2k}$ & $\blockdiag{\bdiag{k-1}{\alpha}, \beta, \beta^{-1}, \bdiaginv{k-1}{\alpha}}$ \\[1.5em]
   {\footnotesize $\quad\qquad k \equiv 2 \mod 4$} & $\big(q^{m} \minus 1\big)^2$ & $\cycle{m}{1} \cdot \cycle{k}{m+1} \cdot \cycle{k+1}{k+m} \cdot \cycle{k+m+1}{2k}$ & $\blockdiag{\bdiag{m}{\alpha}, \bdiag{m}{\beta}, \bdiaginv{m}{\beta}, \bdiaginv{m}{\alpha}}$ \\
   {\footnotesize $\quad\qquad k \equiv 0 \mod 4$} & $\big(q^{m} \plus 1\big)^2$ & $\cyclelong{m}{1}{k+m+1}{2k} \cdot \cyclelong{k}{m+1}{k+1}{k+m}$ & $\blockdiag{\bdiag{m}{\alpha}, \bdiag{m}{\beta}, \bdiaginv{m}{\beta}, \bdiaginv{m}{\alpha}}$ \\[1em]
%%
%% SU_n
%%
\grouptype{$\SU_n(q)$}{$(n=k+1)$}
 & $\frac{q^{n} - (-1)^n}{q+1}$ & $\tau \cdot \cycle{1}{n}$ & $\bdiagtilde{n}{\alpha}$ \\[1em]
 & $q^{k} \minus  (-1)^k$            & $\tau \cdot \cycle{1}{k}$ & $\blockdiag{\bdiagtilde{k}{\alpha}, \alpha^{-1+q-q^2+\ldots-(-q)^{k-1}}}$ \\[1em]
%%
%% SO_{2k}^-
%%
\grouptype{$\SO_n^-(q)$}{$(n=2k)$}
 & $q^{k} \plus 1$        & $\cycle{1}{k} \cdot \cycle{2k}{k+1}$ &
            $\blockdiag{\bdiag{k-1}{\alpha}, \alpha^{-q^{k-1}}, \alpha^{q^{k-1}}, \bdiaginv{k-1}{\alpha}}$ \\[1ex]
 & $(q^{k-1} \plus 1)(q-1)$ & $\tau_k \cdot \cyclelong{1}{k-1}{2k}{k+2}$ & $\blockdiag{\bdiag{k-1}{\alpha}, \beta, \beta^{-1}, \bdiaginv{k-1}{\alpha}}$
\end{tabular}
\caption{%
Pairs of strong generators for the classical groups.
For any group \mbox{$G=\G^F$}, denote the two diagonal elements listed in the last column by $d_{1/2}$, the Weyl group representatives in the third column by $w_{1/2}$ and let $h_i$ satisfy $h_i \cdot F(h_i)^{-1} = w_i$ ($i=1,2$).
Then the elements $d_i^{\,h_i}$ lie in $G$ and generate~$G$.
By~\cite{MalleSaxlWeigel}, the same is true of any two $\GL_n$-conjugates of~$d_i^{\,h}$ which lie in $G$.}
\label{table:generators}
\end{sidewaystable}

Note that the results in~\cite{MalleSaxlWeigel} on which our arguments are based require each strong generator~$\genname$ to be \emph{regular} (i.e., its centralizer in~$\G$ should have minimal dimension).
A sufficient condition for this is that the diagonal entries of~$\gendiag$ be pairwise distinct.
From Table~\ref{table:generators} it is obvious that for all but the last six tori this can be satisfied by choosing $\alpha$ and~$\beta$ in the way described above.
The other cases require some elementary but sometimes lengthy arguments.
We present an exemplary reasoning for $\SO_{4m}^+(q)$, the remaining groups are treated similarly.
We assume that $q$ is odd.
Let~$\varphi$ denote Euler's totient function (that\nolinebreak{} is, $\varphi(r)$ is the number of positive integers~$<r$ which are coprime to~$r$).
Recall that the number of elements of order~$q^m+1$ in $\F_{q^k}$ equals $\varphi(q^m+1)$.
It suffices to show that this number is greater than~$2m$, for then we can choose $\beta$ such that it differs from all diagonal entries~$\alpha^{\pm q^i}$, $0\leq i \leq m \minus 1$, hence all diagonal entries of~$\gendiag$ are pairwise distinct.
Now the inequality~$\phi(r) \geq \sqrt{r}$ holds for all~$r\not\in\{2,6\}$ (see~\cite[p.\,9]{Sandor_et_al}).
This implies $\phi\,(q^m \plus 1) \geq \sqrt{q^m \plus 1}$ if~\mbox{$(q,m) \neq (5,1)$}.
Moreover, the right hand side is~$>2m$ for~$q \geq 5$ or~\mbox{$q=3$}, $m\geq 4$, which is what we wanted to show.
Using Magma~\cite{Magma}, it is easily checked that there still exist two regular strong generators in~$\SO_{4m}^+(q)$ for all excluded cases except the trivial one $(q,m)=(3,1)$.

\subsection{Auxiliary Material}

We conclude this section with several lemmas.
The first three make statements about the coefficients of polynomials whose roots have certain properties.
\begin{lemma}
\label{lem:symmetric coefficients}
Let \mbox{$\displaystyle f(X) = X^{n} + a_{n-1}\cdot X^{n-1} + \dots + a_1\cdot X + a_0$} be a polynomial such that for every root~$\alpha$, its inverse~$\alpha^{-1}$ is also a root but $\alpha \neq \alpha^{-1}$.
Then $f$ is ``palindromic'' in the sense that its coefficients are symmetric, i.e., that~$a_{n-i} = a_i$ holds for all~$i\leq n$ (with $a_n = 1$).
\end{lemma}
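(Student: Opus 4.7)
The plan is to compare $f$ with its reciprocal polynomial $\tilde f(X) := X^n f(1/X) = a_0 X^n + a_1 X^{n-1} + \cdots + a_{n-1} X + 1$, whose coefficients are precisely those of $f$ in reverse order. The desired conclusion $a_{n-i} = a_i$ is equivalent to the identity $\tilde f = f$, so it suffices to establish this equality of polynomials.

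The first step is to extract structural information from the hypothesis. Since $0$ has no inverse, $0$ cannot be a root of $f$, so $a_0 \neq 0$ and $\tilde f$ really does have degree $n$. Moreover, the map $\alpha \mapsto \alpha^{-1}$ is an involution on the multiset of roots of $f$ without fixed points (this is exactly the content of $\alpha \neq \alpha^{-1}$), so the roots partition into inverse pairs $\{\alpha, \alpha^{-1}\}$. In particular $n$ is even and $\prod_i \alpha_i = 1$, each pair contributing $1$ to the product.

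The second step is a direct computation. Writing $f(X) = \prod_i (X - \alpha_i)$ over an algebraic closure, one obtains
\begin{equation*}
\tilde f(X) \;=\; \prod_i (1 - X\alpha_i) \;=\; (-1)^n \Big(\prod_i \alpha_i\Big) \prod_i \big(X - \alpha_i^{-1}\big).
\end{equation*}
With $n$ even and $\prod_i \alpha_i = 1$ the scalar prefactor collapses to $1$, and since $\{\alpha_i^{-1}\}$ and $\{\alpha_i\}$ agree as multisets (by the pairing) the remaining product equals $f(X)$. Hence $\tilde f = f$, and comparing coefficients of $X^{n-i}$ on both sides yields the claim.

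The argument is essentially mechanical; the only subtle point is the sign. Without invoking both $n$ even and $\prod_i \alpha_i = 1$, one would only conclude $\tilde f = \pm f$, i.e., $f$ palindromic \emph{or} anti-palindromic. Ruling out the minus sign is precisely what makes the fixed-point-freeness $\alpha \neq \alpha^{-1}$ indispensable to the hypothesis.
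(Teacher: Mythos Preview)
Your proof is correct. It takes a slightly different route from the paper's own argument: the paper works directly with the elementary symmetric functions, observing that any product of $i$ roots equals the product of the inverses of the remaining $n-i$ roots (since $\prod_j \alpha_j = 1$), and hence, by the hypothesis that inverses are again roots, a product of $n-i$ roots. This sets up a bijection between the monomials appearing in $a_{n-i}$ and those in $a_i$; evenness of $n$ then matches the signs. Your approach packages the same two ingredients ($n$ even, $\prod_j \alpha_j = 1$) into the single identity $\tilde f = f$ for the reciprocal polynomial, which is cleaner and makes the role of the sign more transparent. Both arguments are short; yours has the advantage of being the standard device for palindromic polynomials and of making explicit why the anti-palindromic case is excluded.
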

\begin{proof}
Up to sign, the coefficient $a_{n-i}$ equals the sum of all possible products of precisely~$i$ roots.
Since by assumption all roots multiply up to~$1$, every product of $i$ roots is at the same time the product of the inverses of the remaining~$(n-i)$ roots, hence again by assumption a product of~$(n-i)$ (different) roots.
This means that the sum of all possible products of~$i$ roots equals the sum of all possible products of~$(n-i)$ roots, which proves~$a_{n-i} = a_i$.
Note that the signs of the two coefficients coincide since $f$ has even degree.
\end{proof}

\begin{lemma}
\label{lem:antisymmetric coefficients}
If $f(X) = X^{n} + a_{n-1} \cdot X^{(n-1)} + \dots + a_{1} \cdot X + a_{0}$ is divisible by \mbox{$(X-1)$} and all roots~\mbox{$\alpha\!\neq\!1$} of~$f$ have the property that~$\alpha^{-1}$ is also a root but that \mbox{$\alpha \neq \alpha^{-1}$}, then the coefficients of~$f$ are antisymmetric in the sense that $a_{n-i} = -a_i$ for all~$i\leq n$ (where again we set $a_0 \coloneqq 1$).
\end{lemma}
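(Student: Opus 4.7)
The plan is to reduce the statement to Lemma~\ref{lem:symmetric coefficients} by factoring out the simple root at $X=1$.

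First, I would write $f(X) = (X-1)\cdot g(X)$, where $g(X) = X^{n-1} + b_{n-2}X^{n-2} + \dots + b_0$ is monic of degree $n-1$. Assuming that $1$ occurs as a \emph{simple} root of $f$ (the situation arising in our applications), the roots of $g$ are precisely the non-$1$ roots of $f$. By hypothesis these come in pairs $\{\alpha,\alpha^{-1}\}$ with $\alpha \neq \alpha^{-1}$, so $g$ satisfies the hypothesis of Lemma~\ref{lem:symmetric coefficients}. Consequently $g$ has symmetric coefficients: $b_{n-1-j} = b_j$ for all $j$, with $b_{n-1} = b_0 = 1$.

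Next, expanding $f(X) = X\cdot g(X) - g(X)$ yields the coefficient recursion $a_i = b_{i-1} - b_i$ for $0 \leq i \leq n$, under the conventions $b_{-1} = 0 = b_n$. The desired antisymmetry now drops out by substitution: using $b_{n-i-1} = b_i$ and $b_{n-i} = b_{i-1}$ from palindromicity, one gets $a_{n-i} = b_{n-i-1} - b_{n-i} = b_i - b_{i-1} = -a_i$. The endpoint values are consistent with this relation: $a_n = b_{n-1} - b_n = 1$ and $a_0 = b_{-1} - b_0 = -1$.

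The main subtlety to handle is the possibility that $1$ is a root of $f$ of multiplicity $\geq 2$; then $1$ is also a root of $g$ and Lemma~\ref{lem:symmetric coefficients} cannot be invoked directly. In all situations where the present lemma is used, however, $1$ appears as a simple root of $f$, so the reduction above suffices. (For completeness one could avoid this issue entirely by working with the functional equation $X^n f(1/X) = a_0\cdot f(X)$, which follows from the fact that the multiset of roots of $f$ is closed under inversion; combined with $f(1)=0$ this likewise yields the claim.)
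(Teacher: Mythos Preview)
Your argument is correct and follows essentially the same route as the paper: factor $f=(X-1)\cdot g$, apply Lemma~\ref{lem:symmetric coefficients} to $g$, and then compare coefficients. You have simply spelled out the coefficient comparison explicitly and flagged the (harmless) simplicity assumption on the root~$1$, which the paper leaves implicit.
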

\begin{proof}
By the previous lemma, the polynomial~$g=\frac{1}{(X-1)}\cdot f$ is symmetric.
Comparing coefficients yields the result.
\end{proof}

\noindent By an argument analogous to the proof of Lemma~\ref{lem:symmetric coefficients} we also obtain the following result.
\begin{lemma}\label{lem:symmetry SU}
Let $f=X^n + a_{n-1} X^{n-1} + \dots + a_1 X + (-1)^n \in \mathbb{F}_{q^2}[X]$ be a separable polynomial such that for every root $\alpha$ of $f$, $\alpha^{-q}$ is also a root.
Then its coefficients have the following symmetry: $a_i=(-1)^n \cdot a_{n-i}^q$ for all $i\leq n$.
\end{lemma}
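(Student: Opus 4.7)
The plan is to mimic the elementary-symmetric-function argument used in Lemma~\ref{lem:symmetric coefficients}, but with the twist that the permutation of roots $\alpha \mapsto \alpha^{-q}$ now combines inversion with the $q$-Frobenius, so the resulting symmetry of the coefficients is ``semi-linear'' rather than purely linear.

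First I would fix the convention $f(X) = \prod_{j=1}^n (X-\alpha_j)$ so that $a_{n-i} = (-1)^i e_i$, where $e_i = e_i(\alpha_1,\dots,\alpha_n)$ is the $i$-th elementary symmetric polynomial. The hypothesis $a_0 = (-1)^n$ translates to $e_n = \prod_j \alpha_j = 1$. Since $f$ is separable the $\alpha_j$ are pairwise distinct, so the map $\alpha \mapsto \alpha^{-q}$, which the hypothesis sends roots of $f$ to roots of $f$, is a bijection of the multiset $\{\alpha_1,\dots,\alpha_n\}$ onto itself.

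Next, using invariance of $e_i$ under permutation of its arguments, I would write
\begin{equation*}
  e_i \;=\; e_i\bigl(\alpha_1^{-q},\dots,\alpha_n^{-q}\bigr) \;=\; \bigl(e_i(\alpha_1^{-1},\dots,\alpha_n^{-1})\bigr)^{q},
\end{equation*}
where the second equality just pulls the $q$-th power out of every monomial. The standard identity $e_i(\alpha_1^{-1},\dots,\alpha_n^{-1}) = e_{n-i}(\alpha_1,\dots,\alpha_n)/e_n(\alpha_1,\dots,\alpha_n)$ combined with $e_n = 1$ then gives
\begin{equation*}
  e_i \;=\; e_{n-i}^{\,q}.
\end{equation*}

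Finally I would translate back to the $a_i$: from $a_i = (-1)^{n-i} e_{n-i}$ and $a_{n-i}^{\,q} = (-1)^i e_i^{\,q} = (-1)^i e_{n-i}$ (using the displayed relation in the form $e_i^{\,q} = e_{n-i}$, which follows by applying $q$-th power to $e_{n-i} = e_i^{\,q}$ and invoking $e_i \in \mathbb{F}_{q^2}$ so that $e_i^{\,q^2} = e_i$), the ratio is $(-1)^{n-2i} = (-1)^n$, yielding $a_i = (-1)^n a_{n-i}^{\,q}$ as claimed. There is no genuine obstacle here; the only care required is bookkeeping of the signs $(-1)^i$ versus $(-1)^{n-i}$ and verifying that $e_n = 1$ makes the inversion step work cleanly.
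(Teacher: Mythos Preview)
Your proof is correct and follows essentially the same approach as the paper, which simply says the result follows ``by an argument analogous to the proof of Lemma~\ref{lem:symmetric coefficients}'' --- you have written out that elementary-symmetric-function argument in full detail. One small remark: the detour through $e_i\in\mathbb{F}_{q^2}$ in your final step is unnecessary, since the relation $e_i=e_{n-i}^{\,q}$ holds for all $i$, so swapping $i\leftrightarrow n-i$ gives $e_{n-i}=e_i^{\,q}$ directly.
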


\noindent
The next two lemmas will be used to simplify certain matrix transformations and determinant computations for matrices of a very special shape.
Let $K$ be a field, and consider a square matrix~$B$ of the following form.
\newcommand{\Bbitriang}{\widetilde{B}}
\begin{equation}
  \label{eqn:almost bitriangular matrix}
  B = \left(
  \begin{array}{c@{\;\;}c@{\;\;}c@{\;\;}c@{\;}c |c| cc@{\;\;\;}c@{\;\;\;}c}
       X+b_1 & b_2 &\ldots & b_{{r}-1} & b_{r} & \bigstar & \beta & & &\\
       -1 &  X &        &    &    &    & & & &\\[-0.5em]
          & -1 & \ddots &    &    &    & & & &\\[-0.5em]
          &    & \ddots &  \hspace*{-0.5em}X &    & & & &\\
          &    &        & \hspace*{-0.5em}-1 & X  & & & &\\
\hline
          &    &        &    & \bigstar & \bigstar & \bigstar & &\\
\hline
     & & & & c_{{r}-1}&   & X & -1 &        &    \\
     & & & & \vdots &   &   & \ddots & \ddots &    \\
     & & & & c_1    &   &   &   &  X     & -1  \\
     & & & & c_0    &   &   &   &        & X
\end{array}\right).
\end{equation}
Here $X$ is an indeterminate, $b_i, c_i \in K$ and blank fields indicate zeros.
To account for all necessary cases, we adopt the convention that~$B$ may possess one or more optional rows and columns in the middle, as indicated by the asterisks (provided that it remains a square matrix).
We stipulate, however, that in the upper middle extra block only the first row contains additional nonzero elements, in the left middle extra block only the ${r}$-th column contains nonzero entries, and in the right middle extra block only the first column contains nonzero elements (as suggested by the asterisks).
Solely the central block is allowed to be filled arbitrarily.

As preparation for the first lemma we define the following two families of polynomials.
By convention, we set~$b_0 = 1$.
The other coefficients are the entries $b_i$ and $c_i$ appearing in the definition of~$B$.
\begin{equation}
  g_j(X) \coloneqq \sum_{i=0}^{j} b_i X^{j-i} \;\;\quad (j \leq r),
  \qquad
  h_j(X) \coloneqq \sum_{i=0}^{j} c_i X^i \;\;\quad (j \leq r\minus 1).
  \label{eqn:polynomials g_j and h_j}
\end{equation}
It is easy to see that $g_j$ and~$h_j$ satisfy the following recursive relations, which will be used in the proof of the next lemma.
\begin{equation}
   g_{j+1}(X) = X \cdot g_j(X) + b_{j+1}, \qquad h_{j}(X) = c_{j} \cdot X^j +h_{j-1}(X).
   \label{eqn:recursive relations of g_j and h_j}
\end{equation}
\begin{lemma}
\label{lem:transformation into bitriangular matrix}
If~$B$ is as above in~(\ref{eqn:almost bitriangular matrix}), it can be transformed by elementary row transformations into a matrix~$\Bbitriang$ of the following shape.
\begin{equation}
  \label{eqn:bitriangular matrix}
  \Bbitriang = \left(
  \begin{array}{c@{\;\;}c@{\;\;}c@{\;\;}c@{\;}c |c| c@{}c@{\;\;}c@{\;\;}c}
       g_1 & b_2 & \ldots & b_{{r}-1} &   b_{r}   &    \bigstar   &  \beta  & & & \\
           & g_2 &        & \bullet & \bullet & \bullet & \bullet & & & \\[-0.5em]
           &     & \ddots & \bullet & \bullet & \bullet & \bullet & & & \\[-0.0em]
           &     &        & g_{{r}-1} & \bullet & \bullet & \bullet & & & \\
           &     &        &         &   g_{r}   & \bullet & \bullet & & & \\
\hline
           &    &        &  &  \bigstar  & \bigstar &   \bigstar   & &\\
\hline
     & & & & h_{{r}-1}&   & X^{{r}} &        &     &   \\[-0.3em]
     & & & &     \vdots     &   &               & \ddots &     &   \\[0.2em]
     & & & &     h_1        &   &               &        & X^2 &   \\[0.2em]
     & & & &     h_0        &   &               &        &     & X
\end{array}\right),
\end{equation}
where $g_i$ and $h_i$ are the polynomials defined above and the symbol $\bullet$ is a placeholder for the topmost entry of the column in which it occurs.
The determinants of the original and the transformed matrix are related by
\begin{equation}
  \label{eqn:relationship between original and transformed bitriangular matrix}
  \det(\Bbitriang) \;\;=\;\; \det(B) \;\;\cdot\;\; \prod_{i=1}^{{r}-1} g_i \;\;\cdot\;\; \prod_{i=1}^{{r}-1} X^i.
\end{equation}
\end{lemma}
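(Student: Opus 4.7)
The plan is to perform two independent sweeps of row operations, one on the top $r$ rows of $B$ and one on the bottom $r$ rows, each driven by one of the recursions in~(\ref{eqn:recursive relations of g_j and h_j}). The middle row of~$B$ and the starred pieces linking the three blocks are untouched, because every operation is confined to rows that are already zero in the neighbouring block. As a consequence the $UR$~block and the positions below the middle row on the left stay zero throughout, and the connecting entries are carried along unchanged.

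For the top block I work top-down. Row~$1$ already begins with $g_1 = X + b_1$. For $j = 2, 3, \ldots, r$ (in this order), I replace row~$j$ by $g_{j-1}\cdot(\text{row }j) + (\text{already modified row }j\!-\!1)$. Using $g_j = X\cdot g_{j-1} + b_j$, this kills the $-1$ in position $(j,j\!-\!1)$, turns the diagonal entry into~$g_j$, and---since the original row~$j$ is zero in every column to the right of its diagonal---simply copies the entries $b_{j+1},\ldots,b_r,\bigstar,\beta$ from the row above. This yields exactly the $\bullet$-pattern of~(\ref{eqn:bitriangular matrix}). Scaling by $g_{j-1}$ multiplies the determinant by $g_{j-1}$ (the subsequent addition of another row does not), so these operations contribute the factor $\prod_{i=1}^{r-1} g_i$.

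For the bottom block I sweep bottom-up. The last row is already of the desired form since $h_0=c_0$ and its diagonal entry is $X=X^1$. For $k = r\!-\!1,\,r\!-\!2,\,\ldots,\,1$ I replace the $k$-th bottom-row by $X^{r-k}\cdot(\text{bottom-row }k) + (\text{already modified bottom-row }k\!+\!1)$. Using $h_j = c_j X^j + h_{j-1}$, the leftmost coefficient transforms as $X^{r-k}c_{r-k} + h_{r-k-1} = h_{r-k}$, the diagonal entry becomes $X^{r-k+1}$, and the $-1$ immediately above it is cancelled. The other columns of the bottom-right block are zero in both summands, so this block becomes the diagonal matrix $\mathrm{diag}(X^r,X^{r-1},\ldots,X^2,X)$ claimed in~(\ref{eqn:bitriangular matrix}). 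These scalings multiply the determinant by $\prod_{k=1}^{r-1} X^{r-k} = \prod_{i=1}^{r-1} X^i$.

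Combining the two sweeps produces $\widetilde{B}$ as in~(\ref{eqn:bitriangular matrix}), and multiplying the two determinantal factors gives~(\ref{eqn:relationship between original and transformed bitriangular matrix}). The whole argument is essentially a bookkeeping exercise; the only points worth noting are that ``elementary'' is used here in the broader sense that allows scaling a row by the polynomial $g_{j-1}$ or $X^{r-k}$ (which is precisely what generates the determinant correction), and that one has to match the index shift between the row numbers of~$B$ and the subscripts of~$g_j$ and~$h_j$---which is the only real source of potential slips rather than of genuine difficulty.
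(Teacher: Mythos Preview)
Your proof is correct and follows essentially the same approach as the paper's own argument: top-down elimination in the upper block and bottom-up elimination in the lower block, each driven by the recursions~(\ref{eqn:recursive relations of g_j and h_j}). You simply spell out in detail what the paper condenses into a single sentence (``inductive elimination of the subdiagonal/superdiagonal entries using~(\ref{eqn:recursive relations of g_j and h_j})''), including the explicit row replacement $\text{row}_j \mapsto g_{j-1}\cdot\text{row}_j + \text{row}_{j-1}$ and the analogous bottom sweep, and you correctly track the determinantal factors.
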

\begin{proof}
By definition of $g_1$ and $h_0$, the first and last row of $B$ and $\Bbitriang$ already agree.
The claim follows by inductive elimination of the subdiagonal entries in the upper left block (from top to bottom) and of the superdiagonal entries in the lower right block (from bottom to top), using~(\ref{eqn:recursive relations of g_j and h_j}).
The row multiplications required during these transformations obviously alter the determinants as specified by~(\ref{eqn:relationship between original and transformed bitriangular matrix}).
\end{proof}

\begin{cor}
\label{cor:det of bitriangular matrix}
Let~$B$ be as in~(\ref{eqn:almost bitriangular matrix}). Then $\det(B) = \det(B')$, where
\begin{equation*}
  B' = \left(
  \begin{matrix}
      g_r    & \bullet  & \beta \\
    \bigstar & \bigstar & \bigstar \\
     h_{r-1} &  0       & X^r
  \end{matrix} \right),
\end{equation*}
is the middle block of~$\Bbitriang$ as in~(\ref{eqn:bitriangular matrix}).
\end{cor}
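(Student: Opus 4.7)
The approach is to evaluate $\det(\tilde B)$ in two different ways and compare. Lemma~\ref{lem:transformation into bitriangular matrix} provides the first evaluation,
\[\det(\tilde B) \;=\; \det(B)\cdot\prod_{i=1}^{r-1}g_i\cdot\prod_{i=1}^{r-1}X^i,\]
and the plan is to obtain a second evaluation by reading off a nested block-triangular structure of~$\tilde B$ that produces the expression $\prod_{i=1}^{r-1}g_i\cdot\det(B')\cdot\prod_{i=1}^{r-1}X^i$. Cancelling the common factors then yields $\det(B)=\det(B')$.

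To expose this structure I would partition the rows and columns of~$\tilde B$ into five groups: $\{1,\dots,r\minus 1\}$, $\{r\}$, the middle indices, $\{n\minus r\plus 1\}$, and $\{n\minus r\plus 2,\dots,n\}$. Directly from~(\ref{eqn:bitriangular matrix}) the rows in groups~$2,3,4,5$ vanish in columns~$1,\dots,r\minus 1$, and the rows in groups~$1,2,3,4$ vanish in columns~$n\minus r\plus 2,\dots,n$. The first observation makes~$\tilde B$ block upper triangular with respect to the coarse partition $\{1\}\,|\,\{2,3,4,5\}$, so that $\det(\tilde B)=\det(U)\cdot\det(M)$, where $U$ is the upper triangular group~$1$ block (with diagonal $g_1,\dots,g_{r-1}$) and $M$ is the remaining submatrix. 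The second observation makes $M$ block lower triangular with respect to $\{2,3,4\}\,|\,\{5\}$, so that $\det(M)=\det(B')\cdot\det(D)$ with $D=\diag(X^{r-1},\dots,X)$ the group~$5$ diagonal block.

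Multiplying these factorizations gives $\det(\tilde B)=\prod_{i=1}^{r-1}g_i\cdot\det(B')\cdot\prod_{i=1}^{r-1}X^i$, which is the second evaluation wanted; equating it with the output of Lemma~\ref{lem:transformation into bitriangular matrix} and cancelling the common factors produces the claim. I do not anticipate any substantial obstacle; the only thing to double-check is that the vanishing of the blocks in the group~$1$ column and in the group~$5$ column really does follow from the constraints on~$B$ stipulated in~(\ref{eqn:almost bitriangular matrix}) together with the way the row operations of Lemma~\ref{lem:transformation into bitriangular matrix} act, but this is an immediate inspection of~(\ref{eqn:bitriangular matrix}).
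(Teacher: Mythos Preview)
Your proposal is correct and follows essentially the same route as the paper: exploit the block-triangular structure of~$\tilde B$ to factor $\det(\tilde B)=\det(U)\cdot\det(B')\cdot\det(L)$ with $U$ upper triangular (diagonal $g_1,\dots,g_{r-1}$) and $L$ lower triangular (diagonal $X^{r-1},\dots,X$), then combine with the relation from Lemma~\ref{lem:transformation into bitriangular matrix}. The only difference is cosmetic: the paper reads off the full three-block structure of~$\tilde B$ at once, whereas you reach the same factorization via two nested block-triangular steps.
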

\begin{proof}
$\Bbitriang$ is a block matrix of the form $\Bbitriang=$%
{\footnotesize$\left(
  \begin{array}{c|c|c}
    U & \bigstar & 0 \\
    \hline
    0 & B'\vphantom{\Big(} & 0 \\
    \hline
    0  & \bigstar & L
  \end{array}
  \right)$}, where $U$ is upper triangular with diagonal entries $g_1,\ldots,g_{r-1}$ and $L$ is lower triangular with diagonal entries $X^{r-1},\ldots,X$.
We compute $\det(\Bbitriang) = \det(U) \cdot \det(B') \cdot \det(L) = \big(\prod_{i=1}^{r-1} g_i\big) \cdot \det(B') \cdot \big(\prod_{i=1}^{r-1} X^i\big)$ and use~(\ref{eqn:relationship between original and transformed bitriangular matrix}).
\end{proof}

\subsection{Overview of the Method}
\label{sec:overview of the method}

In the following sections we are going to compute for certain groups of Lie type~$G=\G^F$ a\nolinebreak{} Frobenius module~$(M,\Phi)$ over a rational function field~$\grdfield$ such that $\GalPhi(M)=G$.
This is achieved by using as representing matrix~$D$ of~$\Phi$ the cross section for this group obtained from Theorem~\ref{thm:cross-section} (in some cases with slight adaptions).
Using the upper bounds Theorem~\ref{thm:upperbound} and~\ref{thm:twupperbound} we conclude that~$(M,\Phi)$ has Galois group a subgroup of~$G$.
In order to prove equality, we show that $D$ can be specialized to the two strong generators~$\genname_{1/2}$ listed in Table~\ref{table:generators}.
The argument goes via the characteristic polynomial~$\charpoly{D}$ of~$D$.
Its coefficient usually have a certain kind of symmetry and the same holds for the characteristic polynomials of~$\genname_{1/2}$.
We use this to prove that~$D$ can be specialized to matrices which have the same separable characteristic polynomial as~$\genname_{1/2}$ and hence the same Jordan normal form.
Thus the specializations are conjugate to~$\genname_{1/2}$ so that by definition they form a set of strong generators, too.
By Theorem~\ref{thm:lowerbound}, we have~$\GalPhi(M) = G$.

The construction of the cross section matrix~$D$ always follows a similar pattern.
First we fix the particular ground field for the to-be-constructed Frobenius module~$M$.
In most cases this is~$\grdfield=\Fq(t_1,\ldots,t_k)$, where $k=\rank(\G)$, but sometimes we need an additional parameter to ensure that certain equations are solvable.
Next we fix the particular representation for~$\G$ with which we are going to work and list the \mbox{1-parameter} subgroups (which are parametrized by $t_1,\ldots,t_k$) associated to all simple roots of~$\G$ as well as the corresponding Weyl group representatives.
Being well-known, they are presented without further calculations; they can easily be computed following the instructions in~\cite{Carter89} (recall that they are always considered with respect to the standard torus and standard Borel subgroup of~$\G$).
Next we define ``partial'' cross sections
$\pcs_j \coloneqq X_1 w_1 X_2 w_2 \dots X_j w_j$
which arise by considering only the first $j$ factors in Steinberg's cross section~(\ref{eqn:cross-section}), for some $j \leq k$.
By a simple inductive argument, we derive the general shape of a typical element of~$\pcs_j$.
The interim approach then is to choose $D=\pcs_k(t_1,\ldots,t_k)$.
For most of the groups this definition already works.
In the case of the orthogonal groups we need an additional parameter to achieve sufficient specializations.
In some of the twisted cases (more precisely, in the cases where one or more roots are fixed by the action of $F$ on the simple roots) it is not possible to choose $D$ such that it both lies in the cross section and satisfies the upper bound criterion~\ref{thm:upperbound}.
The final definition of~$D$ then depends on the group and is usually close to Steinberg's cross section.
As a last step we compute the characteristic polynomial of $D$ and show that it specializes to those of the strong generators.
This completes the proof that $\GalPhi(M)=G$.

Afterwards, we compute explicit polynomials for each group.
This is done by considering the equation~$D\cdot\phiq(x) = x$ $\;(x \in M)$ which defines the solution space of~$M$ (see section~\ref{sec:frobenius modules}).
Recall that \mbox{$\GalPhi(M)$} is defined as $\Gal(\extfield/\grdfield)$, where $\extfield$ is the solution field of~$M$, i.e., the smallest field containing all coordinates of a full system of solutions to the above vector equation.
We expand it into a system of equations for the coordinates of~$x$ and eliminate all but one of the variables, ending up with a single univariate polynomial~$f$ whose roots generate~$\extfield$.
This polynomial has~$\extfield$ as its splitting field, hence~$\Gal(f) = \Gal(\extfield/\grdfield) = G$.

In the following sections we freely mix the use of $k$ (the rank of $\G$) and $n$ (the dimension of its natural representation).
Note that while calculating the cross section matrices, we make heavy use of the notation introduced in section~\ref{sec:notation}.

\section{Polynomials for the Classical Groups}
\label{sec:polynomials for the classical groups}
\subsection{The Special Linear Groups $\SL_{k+1}(q)$}

The special linear group is defined as \mbox{$\SL_n(q) = \{ A \in \GL_n(q) \;|\; \det(A) = 1 \}$}.
Its root system has $k$ simple roots, where $n=k+1$.
As ground field of the Frobenius module we choose $\grdfield = \Fq(t_1,\ldots,t_k)$.
For the $i$-th root subgroup and the $i$-th Weyl group representative we choose $\root{i}{t_i}$ and~$\weyl_i$, respectively
(the matrices $E_i$ and $\weyl_i$ are defined in section~\ref{sec:notation}).
With the exemplary calculation of section~\ref{sec:notation} in mind, an easy inductive argument shows that the partial cross section matrix $\pcs_j$, which depends on the parameters~$t_1,\ldots,t_j$, has the following shape for $1\leq i\leq n$.
\begin{equation*}
\pcs_j(t_1,\ldots,t_j) = \idblock{\textbf{(j+1)},\underdash}{\companion[-]{-t_1, \ldots, -t_j, 1}} \quad \in\; \SL_n(\grdfield).
\end{equation*}
Indeed, the case $j=1$ is clear by definition, and multiplication by the next factor $\root{j+1}{t_{j+1}} \cdot \weyl_{j+1}$ extends the companion matrix in the upper left block by one row and one column.
In principle, according to the strategy outlined in Section~\ref{sec:overview of the method}, we would choose the full cross section matrix $\pcs_k(t_1,\ldots,t_k)$ as the representing matrix of our Frobenius module.
However, to make subsequent computations slightly easier, we adapt the the signs of the entries to obtain the following matrix.
\begin{equation*}
D =
\left(\begin{array}{c@{\;\;}c@{\;\;}cc}
          (-1)^k\, t_1 & \ldots & (-1)^k\, t_k & (-1)^k\\
 \phantom{(-1)^k}    1 &        &     &   \\
            & \ddots &     &   \\
            &        &  1  & 0 \\
    \end{array}\right) \quad \in\; \SL_n(\grdfield).
\end{equation*}
The general form of the characteristic polynomial of a matrix of this shape is well-known.
For the specific matrix~$D$ we obtain
\begin{equation*}
  h(X) = X^{n} + (-1)^n\, t_1 \cdot X^{n-1} + \ldots + (-1)^n\, t_k \cdot X + (-1)^{n}.
\label{eqn:charpoly of special linear cross section}
\end{equation*}

\smallskip
\begin{thm}
Let $(M,\Phi)$ be the n-dimensional Frobenius module over $\grdfield = \Fq(t_1,\ldots,t_k)$ such that the representing matrix of $\Phi$ with respect to some basis equals $D$.
Then
\begin{enumerate}
  \item $\GalPhi(M) \cong \SL_n(q).$
  \item The solution field of $M$ is generated by the roots of the additive polynomial
    $$\displaystyle f(X) \;=\; X^{q^n} +\; \sum_{i=1}^{n-1} t_{i} \cdot X^{q^{i}} \;+\; (-1)^n \cdot X.$$
    In particular, $\Gal(f) \cong \SL_n(q)$.
\end{enumerate}
\end{thm}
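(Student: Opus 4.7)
The plan follows the template laid out in Section~\ref{sec:overview of the method}. For part~(a), I would first verify the upper bound. The characteristic polynomial of the companion-shaped matrix~$D$ is $h(X)=X^n+(-1)^n t_1 X^{n-1}+\dots+(-1)^n t_k X+(-1)^n$, whose constant term equals $(-1)^n\det(D)$, so $\det(D)=1$ and $D\in\SL_n(\grdfield)$. Theorem~\ref{thm:upperbound} then yields $\GalPhi(M)\leq\SL_n(q)$.

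For the matching lower bound I would invoke Theorem~\ref{thm:lowerbound} using the rank-$k$ valuations coming from specializations $t_i\mapsto a_i\in\Fq$, as described in the paragraph following that theorem. A specialization sends~$D$ to a matrix $\widehat{D}\in\SL_n(\Fq)$ whose characteristic polynomial is $X^n+(-1)^n a_1 X^{n-1}+\dots+(-1)^n a_k X+(-1)^n$; as $(a_1,\dots,a_k)$ ranges over $\Fq^k$, every monic polynomial of degree~$n$ over $\Fq$ with constant term~$(-1)^n$ appears. The two strong generators $\genname_{1/2}$ in Table~\ref{table:generators} both have determinant~$1$ and, by the regularity remarks at the end of Section~\ref{sec:generators}, pairwise distinct eigenvalues; their characteristic polynomials are therefore separable of degree~$n$ with constant term~$(-1)^n$, and each can be realized as $\charpoly{\widehat{D}}$ for a suitable choice of $a_i$'s. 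Since two matrices with equal separable characteristic polynomial share a rational canonical form and are thus $\GL_n(\Fq)$-conjugate, the element of $\GalPhi(M)$ produced by Theorem~\ref{thm:lowerbound}(ii) is a $\GL_n(\Fq)$-conjugate of the corresponding $\genname_j$. By~\cite{MalleSaxlWeigel} these two conjugates already generate $\SL_n(q)$, yielding $\GalPhi(M)=\SL_n(q)$.

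For part~(b) I would directly expand $D\cdot\phiq(x)=x$ coordinate by coordinate. Rows $2,\dots,n$ each consist of a single~$1$ on the subdiagonal, giving the cascade $x_i=x_{i-1}^q$ and hence $x_i=x_1^{q^{i-1}}$. Substituting into the first-row equation and multiplying through by $(-1)^k=(-1)^{n-1}$ collapses the system to the single univariate equation $f(x_1)=0$ with $f$ exactly the additive polynomial stated. All coordinates of any solution then lie in $\Fq(x_1)$, and $\deg(f)=q^n=\dim_\grdfield M$, so Lemma~\ref{lem:galoisgroup} applies and gives $\Gal(f)\cong\GalPhi(M)\cong\SL_n(q)$.

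The only point requiring any thought is the matching step in part~(a): one must observe that the constraint $\det=1$ in $\SL_n$ corresponds exactly to fixing the constant term of the characteristic polynomial at $(-1)^n$, which is also the constraint satisfied by the family of characteristic polynomials of~$D$ as the $t_i$ vary. Once this is noted the rest is essentially bookkeeping; $\SL_n$ is simply connected and untwisted, so Steinberg's cross section applies verbatim and no auxiliary parameters are needed, making this the cleanest instance of the general method.
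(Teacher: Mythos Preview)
Your proposal is correct and follows essentially the same approach as the paper: upper bound via Theorem~\ref{thm:upperbound}, lower bound by matching the free coefficients of the companion characteristic polynomial to those of the strong generators (the only constraint being the constant term $(-1)^n$, which holds automatically since $\det(\genname)=1$), and part~(b) by the subdiagonal cascade $x_i=x_1^{q^{i-1}}$ together with Lemma~\ref{lem:galoisgroup}. You make explicit the conjugacy step (equal separable characteristic polynomial $\Rightarrow$ conjugate) that the paper leaves implicit in its general outline in Section~\ref{sec:overview of the method}, but otherwise the arguments coincide.
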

\noindent
Note that~$f$ is a kind of Dickson polynomial\footnote{For a very nice reference of the related theory see~\cite{Elkies}, for example.} of dimension~$n$.
Incidentally, the polynomial for~$\SL_n(q)$ given by Elkies~\cite{Elkies} in his Theorem~2 is the same as the one we derived above using completely different methods.
\begin{proof}
a) We have $D \in \SL_n(\grdfield)$, hence $\GalPhi(M) \leq \SL_n(q)$ by Theorem~\ref{thm:upperbound}.
In order to prove equality, we need to show that for each member~$\genname$ of the pair of strong generators of~$\SL_n(q)$ (see Table~\ref{table:generators}), the indeterminates~$t_1,\ldots,t_k$ can be specialized such that~$h(X)$ specializes to the characteristic polynomial~$g(X)$ of~$\genname$.
This is the case because all~$t_i$ can be chosen arbitrarily and the constant coefficient of~$g$ equals $(-1)^n\cdot\det(\genname) = (-1)^n$.

\medskip\noindent
b) We solve the defining equation $D\cdot\phiq(x) = x$ for the solution space of the Frobenius module~$(M,\Phi)$.
With $x=(x_1,\ldots,x_n)$, it is equivalent to the following system of equations.
\begin{subequations}
\begin{align}
  (-1)^k \cdot \Big(\sum_{i=1}^{k}   \;t_i \cdot x_i^{\;q} \Big) +  (-1)^k \cdot (x_{k+1})^{\;q} \quad &= \quad x_1,
               \label{eqn:special linear frobenius equation for x_1} \\[0ex]
     (x_{i-1})^{q} \quad &= \quad x_i \qquad \textrm{for $i\in\{2,\ldots,k\plus1\}$}.
	       \label{eqn:special linear frobenius equation for x_i}
\end{align}
\end{subequations}

\noindent
For simplicity, we abbreviate~$x_1$ by~$X$.
By induction, (\ref{eqn:special linear frobenius equation for x_i}) implies $x_i = X^{q^{i-1}}$.
We insert this into the first equation, bring everything to the left hand side and normalize.
This leaves us with the following additive polynomial.
\begin{equation*}
  f(X) = X^{q^n} \;+\; \sum_{i=1}^k t_i \cdot X^{q^i} \;+\; (-1)^n \cdot X \;\;=\;\; 0.
\end{equation*}
Our derivations above show that $M$ and $f$ satisfy the assumptions of Lemma~\ref{lem:galoisgroup}.
Hence $\Gal(f) = \GalPhi(M) = \SL_n(q)$.
\end{proof}

\subsection{The Symplectic Groups $\Sp_{2k}(q)$}

Let $n=2k$ and denote by $J = \big(\begin{smallmatrix} 0  & J_0 \\ -J_0 & 0 \end{smallmatrix}\big)$ the representing matrix of a bilinear form on~$\Fq^n$, where $J_0 = \left(\begin{smallmatrix} & & 1 \\[-1ex] & \iddots & \\ 1 & & \end{smallmatrix}\right) \in \GL_k(q)$.
The symplectic group is defined as $\Sp_{2k}(q) = \{ A \in \GL_{2k}(q) \;|\; A\tr \cdot J \cdot A = J \}$.
Note that many authors use a slightly different bilinear form but the resulting groups are isomorphic.
We choose $\grdfield=\Fq(t_1,\ldots,t_k)$ as the ground field of the Frobenius module.
The root subgroups for the short roots (that is, for $1 \leq i \leq k-1$) have the form $\blockdiag{\root[k]{i}{t_i},\root[\,k]{-i}{-t_i}}$ whereas the corresponding Weyl group representatives are~$\blockdiag{\weylpos_{i,k},\weylpos_{-i,k}}$ (recall our convention about negative indices from section~\ref{sec:notation}).
The single long root has associated root subgroup $\root[n]{k}{t_k}$ and Weyl group representative $\weyl_{k,n}$.
With this choice, the partial cross section matrices~$\pcs_j$ assume the following shape for~$j<k$.
\begin{equation*}
  \pcs_j(t_1,\ldots,t_j) = \idblock{\mathbf{(j+1)},\underdash,\mathbf{(j+1)}}{\companion{t_1,\ldots,t_j,1},\companionrev{1,-t_1,\ldots,-t_j}}.
\end{equation*}
In other words, $\pcs_j$ is obtained from a $\kxk{(2k)}$-identity matrix by replacing both the upper left and lower right $\kxk{(j\plus 1)}$-block with a kind of companion matrix.
This is easy to verify since multiplication by the next factor simply extends both the upper left and lower right blocks by one row and column.
To obtain the full cross-section, we need to multiply $\pcs_{k-1}(t_1,\ldots,t_{k-1})$ by the last factor~$\root[n]{k}{t_k} \cdot \weyl_{k,n}$.
This yields
\begin{align*}
\pcs_{k}(t_1,\ldots,t_k) &=
  \left(
  \begin{array}{c@{\;\;}c@{\;\;}c@{\;}c | cc@{\;\;\;}c@{\;\;\;}c}
       t_1 & \ldots & t_{k-1} & -t_k & 1 & & &\\
         1 &        &         &      &   & & &\\
           & \ddots &         &      &   & & &\\
           &        &      1  &   0  &   & & &\\
      \hline
     & & & t_{k-1} &    0   & 1 &        &    \\
     & & & \vdots  & \vdots &   & \ddots &    \\
     & & &   t_1   &    0   &   &        & 1  \\
     & & &    -1   &    0   &   &        & 0
  \end{array}
  \right)
\end{align*}
The characteristic polynomial $h(X)$ of this matrix is the determinant of~$B \coloneqq (X \cdot \id_{2k} - \pcs_k)$.
Note that~$B$ has precisely the form of~(\ref{eqn:almost bitriangular matrix}) (with~\mbox{$r=k$} and without any extra rows or columns), where
\begin{alignat*}{4}
  b_i     &= -t_i, \qquad &c_i &= -t_i   \quad&&&&\quad\text{(for $1\leq i\leq k\minus 1$)}, \\
  b_k     &= \phantom{-}t_k,
 &\beta   &= -1, \qquad
 &c_0     &= 1.
\end{alignat*}
It therefore satisfies the conditions of Corollary~\ref{cor:det of bitriangular matrix}, which asserts that
\begin{equation*}
  \det(B) =
  \det\begin{pmatrix} 
    g_k & -1 \\
    h_{k-1} & X^k
  \end{pmatrix}
  = g_k \cdot X^k + h_{k-1}.
\end{equation*}
We rewrite this expression by expanding the polynomials~$g_k$ and~$h_{k-1}$ according to~(\ref{eqn:polynomials g_j and h_j}) and resubstituting~$b_i$ and~$c_i$, which yields the generic characteristic polynomial~$h(X)$ of the cross section of~$\Sp_{2k}$.%
\begin{align}
     h(X)
          &= X^{2k} - \sum_{i=1}^{k-1} t_i X^{2k-i} + t_k X^k
                    - \sum_{i=1}^{k-1} t_i X^{i} + 1.
	    \label{eqn:characteristic polynomial of symplectic cross section}
\end{align}

\newcommand{\eqdist}{\;\;}%
\bigskip
\begin{thm}
\label{thm:frobenius module for C_k}
Let $(M,\Phi)$ be the n-dimensional Frobenius module over $\grdfield=\Fq(t_1,\ldots,t_k)$ such that the representing matrix of $\Phi$ equals $D=\pcs_k(t_1,\ldots,t_k)$ for some basis.
Then
\begin{enumerate}
  \item $\GalPhi(M) \cong \Sp_{2k}(q)$.
  \item The solution field of $M$ is generated by the roots of the additive polynomial
    \begin{equation*}
      f(X) =
      X \eqdist-\eqdist \sum_{i=1}^{k-1} t_i\,X^{q^i}
        \eqdist+\eqdist t_k\,X^{q^k}
        \eqdist-\eqdist \sum_{i=1}^{k-1} (t_{k-i})^{q^{i}}\,X^{q^{k+i}}
        \eqdist+\eqdist X^{q^{2k}} \eqdist=\eqdist 0.
    \end{equation*}
    In particular, $\Gal(f) \cong \Sp_{2k}(q)$.
\end{enumerate}
\end{thm}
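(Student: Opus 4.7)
The plan runs parallel to the $\SL_n$ case in the preceding subsection and follows the scheme of Section~\ref{sec:overview of the method}. For part~(a), the upper bound is immediate from Theorem~\ref{thm:upperbound}: since $D=\pcs_k(t_1,\ldots,t_k)$ is a product of Steinberg's cross-section factors for $\Sp_{2k}$, we have $D\in\Sp_{2k}(\grdfield)$, and hence $\GalPhi(M)\leq\Sp_{2k}(q)$. For the matching lower bound I would use Theorem~\ref{thm:lowerbound}(ii): specializing the parameters via $t_i\mapsto a_i\in\Fq$ at any closed $\Fq$-point produces an element of $\GalPhi(M)$ that is $\Sp_{2k}(\Fqbar)$-conjugate to the specialized matrix $D(a_1,\ldots,a_k)$. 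It therefore suffices to exhibit specializations that are $\Sp_{2k}$-conjugate to each of the two strong generators $\genname_1,\genname_2$ listed in Table~\ref{table:generators}; the strong generator property of~\cite{MalleSaxlWeigel} then forces $\GalPhi(M)=\Sp_{2k}(q)$.

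The key technical step is this conjugacy matching. The characteristic polynomial in~(\ref{eqn:characteristic polynomial of symplectic cross section}) is palindromic of degree $2k$ with constant term $1$, and its $k$ free coefficients are precisely $t_1,\ldots,t_k$, so the assignment $(t_1,\ldots,t_k)\mapsto h(X)$ surjects onto the space of all such monic palindromic polynomials. The diagonalized generators $\gendiag_{1/2}$ in Table~\ref{table:generators} have eigenvalues that pair up as $\alpha^{q^i}$ and $\alpha^{-q^i}$ for $0\le i\le k-1$, so by Lemma~\ref{lem:symmetric coefficients} their characteristic polynomials are also palindromic, with constant term $\det(\genname_i)=1$. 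One can therefore read off $a_i\in\Fq$ so that $h$ specializes exactly to $\charpoly{\genname_i}$. Regularity of $\genname_{1/2}$, verified at the end of Section~\ref{sec:generators}, guarantees pairwise distinct eigenvalues, so sharing the same separable characteristic polynomial forces the specialization and $\genname_i$ to be $\GL_{2k}(\Fqbar)$-conjugate; Theorem~\ref{thm:lowerbound}(ii) then places the conjugator inside $\Sp_{2k}(\Fqbar)$, as needed.

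For part~(b), I would expand $D\cdot\phiq(x)=x$ coordinatewise. Rows $2,\ldots,k$ immediately give $x_i=x_1^{q^{i-1}}$, while a downward induction on the lower block (starting from row $2k$, which forces $x_{2k}=-x_1^{q^k}$) yields, writing $X:=x_1$, the closed form
\begin{equation*}
  x_{2k-j}\;=\;\sum_{i=0}^{j-1}t_{j-i}^{\,q^{i}}\,X^{q^{k+i}}\;-\;X^{q^{k+j}}\qquad(0\le j\le k-1).
\end{equation*}
Substituting $x_{k+1}^{\,q}$ into the first row and rearranging produces exactly the additive polynomial $f(X)$ in the statement. Since $\deg f=q^{2k}=q^n$ and every solution is visibly determined by $x_1$, Lemma~\ref{lem:galoisgroup} delivers $\Gal(f)\cong\GalPhi(M)\cong\Sp_{2k}(q)$. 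The only genuine obstacle is the palindromic matching in part~(a); everything else reduces to organized bookkeeping that mirrors the $\SL_n$ proof.
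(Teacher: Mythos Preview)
Your proposal is correct and follows essentially the same route as the paper: upper bound via Theorem~\ref{thm:upperbound}, lower bound by matching the palindromic characteristic polynomial~(\ref{eqn:characteristic polynomial of symplectic cross section}) against those of the strong generators (Lemma~\ref{lem:symmetric coefficients}, Table~\ref{table:generators}) and invoking Theorem~\ref{thm:lowerbound}, then the same recursive elimination of coordinates to derive $f$ and conclude via Lemma~\ref{lem:galoisgroup}. The only cosmetic difference is that the paper names the lower-half coordinates $y_1,\ldots,y_k$ rather than $x_{k+1},\ldots,x_{2k}$, and it does not explicitly invoke part~(ii) of Theorem~\ref{thm:lowerbound} (the $\GL_n$-conjugacy from part~(i) already suffices, since the upper bound places the conjugates inside $\Sp_{2k}(q)$).
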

\begin{proof}
a) We have $D \in \Sp_{2k}(\grdfield)$ by construction.
Hence, $\GalPhi(M) \leq \Sp_{2k}(q)$ by Theorem~\ref{thm:upperbound}.
Equality is proved again via the characteristic polynomial.
First note that the coefficients of~$h(X)$ are symmetric.
Table~\ref{table:generators} shows that for each strong generator~$A$ of~$\Sp_{2k}(\Fq)$, the characteristic polynomial~$\charpoly{A}$ satisfies the assumptions of Lemma~\ref{lem:symmetric coefficients} since the diagonalization of~$A$ has the form $\diag(\alpha,\alpha^q,\ldots,\alpha^{q^{k-1}},\alpha^{-q^{k-1}},\ldots,\alpha^{-q},\alpha^{-1})$.
Hence the coefficients of~$\charpoly{A}$ are symmetric, too, and we can specialize the indeterminates $t_i$ in such a way that $h(X)$ specializes to $\charpoly{A}(X)$, whereby $D$ specializes to a conjugate of~$A$.
Since this applies to both generators, Theorem~\ref{thm:lowerbound} asserts that $\GalPhi(M) \geq \Sp_{2k}(q)$.

\medskip\noindent
b) We consider the equation~$D\cdot\phiq(x) = x$ which defines the solution space $(M,\Phi)$.
Letting $x=(x_1,\ldots,x_k,y_1,\ldots,y_k)\tr$, we expand it into the following system of equations
\begin{subequations}
\begin{align}
  \sum_{i=1}^{k-1} t_i \cdot x_i^{\;q} - t_k \cdot x_k^{\;q}
                                       + y_1^{\;q} \quad &= \quad x_1,
               \label{eqn:symplectic frobenius equation for x_1} \\[-1ex]
  (x_{i-1})^{q} \quad &= \quad x_i \qquad \textrm{for $i\in\{2,\ldots,k\}$},
	       \label{eqn:symplectic frobenius equation for x_i} \\[1em]
  t_{k-i}\cdot x_k^{\;q} \;\;+\;\;  (y_{i+1})^{q} \quad &= \quad y_i \qquad \textrm{for $i\in\{1,\ldots,k\minus 1\}$},
               \label{eqn:symplectic frobenius equation for y_i} \\
  -x_k^{\;q} \quad &= \quad y_k.
               \label{eqn:symplectic frobenius equation for y_k}
\end{align}
\end{subequations}%
Abbreviate $x_1$ by~$X$ and observe that equation~(\ref{eqn:symplectic frobenius equation for x_i}) recursively implies
\begin{equation}
\label{eqn:symplectic frobenius equation for x_i, direct}
  x_i = X^{\,q^{(i-1)}} \qquad\textrm{for $i\in\{1,\ldots,k\}$}.
\end{equation}
In particular,~$x_k=X^{q^{k-1}}$.
This trans\-forms~(\ref{eqn:symplectic frobenius equation for y_k}) into~$y_k = -X^{q^k}$.
Working our way backwards through~(\ref{eqn:symplectic frobenius equation for y_i}) by letting $i= (k\minus1),\ldots,1$ and reusing the expression for~$y_{i+1}$ obtained in the previous step, we deduce
\begin{equation}
\label{eqn:symplectic frobenius equation for y_i, direct}
  y_{k-\nu} \eqdist = \eqdist \sum_{i=0}^{\nu-1}
                            (t_{\nu-i})^{q^i} \cdot
                            X^{q^{(k+i)}} \eqdist -
                            \eqdist X^{q^{k+\nu}}, \qquad
                            \nu \in \{0,\ldots,(k\minus1)\}.
\end{equation}
In particular (setting $\nu = k\minus1$ and shifting the summation limits up by~1),
\begin{equation}
\begin{split}
  y_1 \eqdist &= \eqdist \sum_{i=1}^{k-1} (t_{k-i})^{q^{i-1}} 
	         \cdot X^{q^{(k+i-1)}} \eqdist - \eqdist X^{q^{(2k-1)}}.
\end{split}
\end{equation}
We plug the last expression for~$y_1$ into equation~(\ref{eqn:symplectic frobenius equation for x_1}), make the substitutions~\mbox{$x_i^{\;q}=X^{q^i}$} implied by~(\ref{eqn:symplectic frobenius equation for x_i, direct}) and bring everything to the right hand side to obtain the polynomial $f(X)$ from the statement of the theorem.
By~(\ref{eqn:symplectic frobenius equation for x_i, direct}) and~(\ref{eqn:symplectic frobenius equation for y_i, direct}) it is clear that every choice of $X=x_1$ uniquely determines the remaining coordinates.
The assertion now follows from Lemma~\ref{lem:galoisgroup}.
\end{proof}

\noindent
Since the $t_i$ are algebraically independent, we are free to adapt the signs of the coefficients of~$f$ without altering its Galois group.
We present a few examples of~$f$ (with altered signs) for small ranks~$k$.
\begin{alignat*}{5}
&k=2: &\qquad  f(X) &= X^{q^4} &&+ t_1^{\;q} X^{q^3}
                                 + t_2 X^{q^2} 
			         + t_1 X^q
                                 + X, \\
&k=3: &\qquad  f(X) &= X^{q^6} &&+ \big( t_1^{\;q^2} X^{q^5} + t_2^{\;q} X^{q^4} \big)
                                 + t_3 X^{q^3}
                                 + \big( t_2 X^{q^2} + t_1 X^{q} \big)
			         + X,
 \\
&k=4: &\qquad  f(X) &= X^{q^8} &&+ \big( t_1^{\;q^3} X^{q^7} + t_2^{\;q^2} X^{q^6}  + t_3^{\;q} X^{q^5} \big)
                                 + t_4 X^{q^4}\\
                            &&&&&\qquad\qquad\;\;\;+ \big( t_3 X^{q^3} + t_2 X^{q^2} + t_1 X^{q} \big)
			         + X.
\end{alignat*}%
It is interesting to note that our polynomials are almost identical to the ones obtained by Elkies for~$\Sp_{2k}(q)$; see~\cite{Elkies}, especially Eqn.\;(5.1) and Thm.\;7.
In particular, they all share the special $q$-palindromic shape.

\subsection{The Odd-Dimensional Special Orthogonal Groups $\SO_{2k+1}(q)$}

Throughout this section we assume that $q$ is odd because the groups $\SO_{2k+1}(\Fq)$ only exist in this case.
Let~$n=2k+1$ and denote by $J = \left(\begin{smallmatrix} & & 1 \\[-1ex] & \iddots & \\ 1 & & \end{smallmatrix}\right) \in \GL_n(q)$ the representing matrix of a bilinear form on~$\Fq^n$.
The special orthogonal group is defined as $\SO_{n}(q) = \{ A \in \GL_{n}(q) \;|\; A\tr \cdot J \cdot A = J \}$.
To construct a Frobenius module for it, we work over the ground field $\grdfield=\Fq(t_1,\ldots,t_k,s)$.
Apart from the usual indeterminates~$t_1,\ldots,t_k$ (one for each simple root), it contains an additional indeterminate~$s$ which is needed so that some quadratic equations become solvable later on.
The root subgroups are $\blockdiag{\root[k]{i}{t_i},1,\root[k]{-i}{-t_i}}$ for \mbox{$i<k$} and 
$\idblock{(k-1),\mathbf{3},(k-1)}{\threemat{1}{t_k}{-\frac{t_k^2}{2}}{0}{1}{-t_k}{0}{0}{1}}$ for $i=k$.
As Weyl group representatives we choose the matrices~$\blockdiag{\weylpos_{i,k},1,\weylpos_{-i,k}}$ if $2 \leq i \leq k\minus 1$.
For $i=1$, however, we multiply the ``standard representative'' $\blockdiag{\weylpos_1,1,\weylpos_{-1}}$ by the torus element $\idblock{\mathbf{2},\underdash,\mathbf{2}}{\twomat{1}{0}{0}{s},\twomat{1/s}{0}{0}{1}}$.
This leads to another representative of the Weyl group, namely $w_1 \coloneqq \idblock{\mathbf{2},\underdash,\mathbf{2}}{\twomat{0}{s}{1}{0},\twomat{0}{1}{1/s}{0}}$.
The reason for this adaption is again to ensure the solvability of certain equations.
Finally, the last Weyl group representative for $i=k$ is
$\idblock{(k-1),\mathbf{3},(k-1)}{\threemat{0}{0}{-1}{0}{-1}{0}{-1}{0}{0}}$.
Now an inductive argument shows that the partial cross section $\pcs_j$ has the following shape for $j \leq k\minus 1$.
\begin{equation*}
  \pcs_j(t_1,\ldots,t_j,s) = \idblock{\mathbf{(j+1)},\underdash,\mathbf{(j+1)}}{%
    \companion{t_1, s t_2, \ldots, s t_j, s},
    \companionrev{\frac{1}{s}, -\frac{t_1}{s}, -t_2, \ldots, -t_j}
  }
\end{equation*}
Multiplying this matrix by the last factor
$\idblock{(k-1),\mathbf{3},(k-1)}{
           \threemat{\frac{t_k^2}{2}}{-t_k}{-1}
                          {t_k}{-1}{0}
                          {-1}{0}{0}}$
leads to the full cross-section matrix
\begin{equation}
\label{eqn:odd orthogonal cross section}
  \pcs_k = \left(
    \begin{array}{c@{\;\;}c@{\;\;}c@{\;}c c|c|c cc@{\;\;\;}c@{\;\;\;}c}
       t_1 & s\,t_2 & \ldots & s\,t_{k\minus 1} & \frac{s}{2}t_k^{\;2} & -s\,t_k & -s & & & & \\
         1 &        &        &                  &                      &         &   & & & & \\
           &      1 &        &                  &                      &         &   & & & & \\
           &        & \ddots &                  &                      &         &   & & & & \\
           &        &        &               1  &                   0  &         &   & & & & \\
      \hline
           &        &        &                  &                  t_k &      -1 &   & & & & \\
      \hline
           &        &        &                  &            t_{k-1}   &         & 0 & 1 &   & & \\
           &        &        &                  &            \vdots    &         &   & \ddots & \ddots & \\
           &        &        &                  &              t_2     &         &   &   & 0 & 1 & \\[0.5em]
           &        &        &                  &        \frac{t_1}{s} &         &   &   &   & 0 & 1 \\[0.5em]
           &        &        &                  &        -\frac{1}{s}  &         &   &   &   &   & 0 \\
    \end{array}
    \right).
\end{equation}
The characteristic polynomial~$h$ of~$\pcs_k$ is the determinant of $B=(X \cdot \id_n - \pcs_k)$.
This matrix has the general shape of~(\ref{eqn:almost bitriangular matrix}), with the correspondences:
\begin{alignat*}{5}
  b_1 &= -t_1,                           &   \qquad b_i &= -s t_i
      &  \qquad\text{for } &i\in\{2,\ldots,k\minus1\}, \\
  b_k &= -\frac{s}{2} t_k^{\;2},   &   \qquad \beta   &= s, \\
  c_0 &= \frac{1}{s},              &   c_1 &= -\frac{t_1}{s},
      &   c_i &= -t_i \quad\text{for } i\in\{2,\ldots,k\minus1\}.
\end{alignat*}
Unlike in the symplectic case, $B$ now has one extra middle row and column containing three nonzero additional elements (with values~$s t_k$, $-t_k$ and~\mbox{$X \plus 1$}).
Corollary~\ref{cor:det of bitriangular matrix} asserts that
\begin{align*}
  h(X) = &\det \begin{pmatrix}
                  g_k     & s t_k & s \\
		-t_k     &   X\plus1   &    0    \\
		  h_{k-1} &      0      &   X^k
  \end{pmatrix} \\[1.0em]
    = &g_k(X) \cdot (X\plus1) \cdot X^{k} \;\;+\;\; s t_k^{\;2} \cdot X^{k} \;\;-\;\; s \cdot (X\plus1) \cdot h_{k-1}(X).
\end{align*}
We expand products, substitute $g_k(X)$ and $h_{k-1}(X)$ according to~(\ref{eqn:polynomials g_j and h_j}) and merge the resulting sums, shifting summation indices where necessary.
Then we replace $s \cdot c_i$ with $b_i$ ($0 \leq i < k$) in the right sum (which results from the substitution of $h_{k-1}$) and rearrange terms.
With the conventions $b_{-1} = b_{k+1} = c_{-1} = c_k = 0$ and $b_0=1$ this yields the following very symmetric form of the characteristic polynomial of~$\pcs_k$:
\begin{align}
     h(X) &= \sum_{i=0}^{k}   (b_{i-1} \plus b_i) \cdot X^{(2k+1)-i}
             \;\;-\;\;
            \sum_{i=0}^{k} (b_{i-1} \plus b_i) \cdot X^{i}.
     \label{eqn:charpoly_Bk}
\end{align}

\smallskip
\begin{thm}
Let $(M,\Phi)$ be the n-dimensional Frobenius module over $\grdfield=\Fq(t_1,\ldots,t_k,s)$ such that the representing matrix of $\Phi$ equals $D=\pcs_k(t_1,\ldots,t_k,s)$ for some basis.
Then
\begin{enumerate}
  \item $\GalPhi(M) \cong \SO_{2k+1}(q)$.
  \item The solution field of $M$ is generated by the roots of the additive polynomial
    \begin{alignat}{3}
      f(X) = -\;&\sum_{i=0}^k \Big( b_{i-1}\toq + (s\,t_k)^{\,q-1} \, b_i \Big) \cdot \;X^{q^i} \notag \\
               \eqdist - \; &\Big( \frac{s^q}{2}\,t_k^{\,2q} + s^q\,t_{k-1}\toq\, t_k^{\,q-1} \Big) \cdot \;X^{q^{k+1}} \notag \\
               \eqdist + \;&\sum_{i=2}^{k+1} s^q \cdot \Big( c_{k-i+1}\toq[i] 
             + t_k^{\,q-1} \cdot c_{k-i}\toq[i] \Big) \;\cdot \;X^{q^{k+i}},
      \label{eqn:polynomial with Galois group B_k}
    \end{alignat}
    where $b_i$, $c_i$ are as above.
    In particular, $\Gal(f) \cong \SO_{2k+1}(q)$.
\end{enumerate}
\label{thm:specialorthogonalodd}
\end{thm}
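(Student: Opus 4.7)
Following the blueprint of the two previous proofs, the plan has three stages: the upper bound $\GalPhi(M) \leq \SO_{2k+1}(q)$ comes from Theorem~\ref{thm:upperbound}, the lower bound from Theorem~\ref{thm:lowerbound} by specializing $D$ to conjugates of the two strong generators in Table~\ref{table:generators}, and finally the explicit polynomial $f(X)$ is derived by eliminating all but one coordinate from $D \cdot \phiq(x) = x$ and invoking Lemma~\ref{lem:galoisgroup}.

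For part~(a), the upper bound is immediate because $D = \pcs_k \in \SO_{2k+1}(\grdfield)$ by construction. For the lower bound, the characteristic polynomial $h(X)$ in~(\ref{eqn:charpoly_Bk}) has antisymmetric coefficients by inspection. Each of the two strong generators has diagonalization $\blockdiag{\bdiag{k}{\alpha}, 1, \bdiaginv{k}{\alpha}}$, so its characteristic polynomial vanishes at $X=1$ and its remaining roots form distinct inverse pairs $\alpha^{\pm q^i}$; Lemma~\ref{lem:antisymmetric coefficients} therefore makes that characteristic polynomial antisymmetric as well. Matching coefficients via $a_i = -(b_{i-1}+b_i)$ yields a triangular system for $t_1,\dots,t_{k-1}$ (with $s$ entering as a denominator), terminating in a quadratic equation for $t_k$ of the form $t_k^{\,2} = C/s$ with $C \in \Fq$ depending only on the target coefficients. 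Since $q$ is odd, varying $s$ over $\Fq^*$ sweeps $C/s$ through all of $\Fq^*$ (if $C \neq 0$), so some choice of $s$ makes $C/s$ a square in $\Fq$; if $C = 0$, then $t_k = 0$ suffices. Theorem~\ref{thm:lowerbound}(ii) then yields a $\G(\Fqbar)$-conjugate of each generator inside $\GalPhi(M)$, and strong generation gives equality.

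For part~(b), write $x = (x_1,\dots,x_k, z, y_1,\dots,y_k)\tr$ with $z$ the middle coordinate and set $X \coloneqq x_1$. Expanding $D \cdot \phiq(x) = x$, the subdiagonal equations in the upper block yield $x_i = X^{q^{i-1}}$. The lower block gives $y_k = -X^{q^k}/s$ together with the recurrence $y_i = t_{k-i}\,X^{q^k} + y_{i+1}^{\,q}$ (with modified coefficient $t_1/s$ at $i = k-1$), which unrolls to a closed-form expression for $y_1$ as an $\Fq$-linear combination of $X^{q^k},\dots,X^{q^{2k-1}}$ whose coefficients are naturally expressible via the $c_j$ of~(\ref{eqn:polynomials g_j and h_j}). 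The central row, however, produces the Artin--Schreier-like relation $z + z^q = t_k X^{q^k}$, which cannot be solved for $z$ in closed form.

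The crux of the argument is to eliminate $z$ without inverting this Artin--Schreier equation. Substituting $z^q = t_k X^{q^k} - z$ into the first-row equation leaves a linear term $+s t_k\,z$. Applying $\phiq$ to the first-row equation produces a second relation involving $z^{q^2}$; using the Frobenius-twisted middle equation $z^q + z^{q^2} = t_k^{\,q}\,X^{q^{k+1}}$ followed by the original middle equation to eliminate both $z^{q^2}$ and $z^q$ leaves a term $-s^q t_k^{\,q}\,z$. Multiplying the first equation by $s^{q-1}\,t_k^{q-1}$ and adding it to the second cancels the $z$-terms exactly. Substituting the explicit formulas for $y_1^{\,q}$ and $y_1^{q^2}$ and collecting against the powers $X^{q^i}$ for $0 \le i \le 2k+1$ then reproduces the three families of coefficients of $f(X)$ stated in the theorem. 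The main obstacle is precisely this bookkeeping: four distinct sources (the polynomial-in-$X$ contributions from the first-row equation and its $\phiq$-image, together with the sums from $y_1^{\,q}$ and $y_1^{q^2}$) must be collated and matched to the announced $b_i$- and $c_j$-expressions. Finally, every coordinate of a solution is uniquely determined by $X$ (the $x_i$'s and $y_j$'s directly, then $z^q$ from the first-row equation, and $z$ from the middle equation) and $\deg f = q^n$, so Lemma~\ref{lem:galoisgroup} concludes $\Gal(f) = \GalPhi(M) \cong \SO_{2k+1}(q)$.
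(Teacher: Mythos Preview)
Your proposal is correct and follows essentially the same route as the paper: Theorem~\ref{thm:upperbound} for the upper bound, antisymmetry of $h(X)$ via Lemma~\ref{lem:antisymmetric coefficients} plus the triangular solve ending in $\frac{s}{2}t_k^2 = u$ for the lower bound, and the same $z$-elimination (substitute the middle equation into the first row, pair with its $\phiq$-image, and take the $(st_k)^{q-1}$-weighted combination) for part~(b). Your description of the elimination applies $\phiq$ before the Artin--Schreier substitution rather than after, but since $\phiq$(substituted equation) equals the substituted version of $\phiq$(equation), the two orderings produce the identical linear combination and hence the same $f(X)$.
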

\begin{proof}
a) As usual, the inclusion $\GalPhi(M) \leq \SO_{2k+1}(q)$ follows from Theorem~\ref{thm:upperbound} since by construction $D \in \SO_{2k+1}(\grdfield)$.
Equality is proved once again via the characteristic polynomial.
Let~$\genname$ be one of the two strong generators of~$\SO_{2k+1}(q)$ and let~$g(X)$ be its characteristic polynomial.
Equation~(\ref{eqn:charpoly_Bk}) above shows that the coefficients of~$h$ are antisymmetric.
The same holds for the coefficients of~$g$ by Lemma~\ref{lem:antisymmetric coefficients} (refer to Table~\ref{table:generators} in order to verify that the assumptions of the lemma are satisfied).
Hence it remains to be shown that the indeterminates $t_1,\ldots,t_k,s$ can be chosen in such a way that the coefficients~$(b_{i-1}+b_i)$ of~$h$ specialize to the coefficients of~$g$.
Let
\begin{equation}
  \label{eqn:odd orthogonal cross section characteristic polynomial revisited}
  g(X) = \sum_{i=0}^{k} a_i \cdot X^i \;-\; \sum_{i=0}^{k} a_i \cdot X^{(2k+1)-i}.
\end{equation}
We need to solve the following system of equations:
\begin{subequations}
\begin{align}
  -1 &= a_0, \qquad -1+t_1 = a_1, \qquad t_1+s\,t_2 = a_2, \label{eqn:odd orthogonal equation for a_0, a_1, a_2}\\
s\,(t_{i-1} + t_i) &= a_i \quad\qquad \textrm{for\quad $i\in\{3,\ldots,k-1\}$} ,
                                  \label{eqn:odd orthogonal equation for a_i} \\
s\,t_{k-1} + \frac{s}{2}\,t_k^{\;2} &= a_k. \label{eqn:odd orthogonal equation for a_k}
\end{align}
\end{subequations}
The first equation is satisfied automatically since $\SO_n(q) \leq \SL_n(q)$.
The next $k\minus 1$ ones can be solved inductively, yielding
$t_i = \frac{1}{s} \cdot \sum_{j=0}^i (-1)^{j}\,a_{i-j}$ for $i\in\{1,\ldots,k-1\}$.
The last equation is then of the form
\begin{equation}
  \frac{s}{2}\, t_k^2 = u \qquad \textrm{(for some $u \in \F_q$)}
  \label{eqn:odd orthogonal equation for t_k}
\end{equation}
and can clearly be solved by choosing $s \in \Fq^\times$ such that $\frac{2}{s}\,\alpha$ is a square.
By construction,~$\{t_1,\ldots,t_k,s\}$ is a set of solutions for the system~(\ref{eqn:odd orthogonal equation for a_0, a_1, a_2})-(\ref{eqn:odd orthogonal equation for a_k}).

\medskip\noindent
b) Again we solve the defining equation~$D\cdot\phiq(x) = x$, $x\in M$ for the solution space of~$(M,\Phi)$.
With $x=(x_1,\ldots,x_k,z,y_1,\ldots,y_k)\tr$ this equation is equivalent to the following system.
\begin{subequations}
\begin{align}
  -\sum_{i=1}^{k} b_i \cdot x_i^{\;q} \eqdist - \eqdist s\,t_k \cdot z^{q} 
                                     \eqdist - \eqdist s \cdot y_1^{\:q} \quad &= \quad x_1,
               \label{eqn:odd orthogonal frobenius equation for x_1} \\[-2ex]
  {x_{i-1}}^{q} \quad &= \quad x_i \qquad \textrm{for $i\in\{2,\ldots,k\}$},
	       \label{eqn:odd orthogonal frobenius equation for x_i} \\
  t_k \cdot x_k^{\;q} \eqdist - \eqdist z^q \quad &= \quad z,
	       \label{eqn:odd orthogonal frobenius equation for z} \\
  -c_{k-i} \cdot x_k^{\;q} \;\;+\;\;  y_{i+1}^{\,q} \quad &= \quad y_i \qquad \textrm{for $i\in\{1,\ldots,k\minus 1\}$},
               \label{eqn:odd orthogonal frobenius equation for y_i} \\
  -c_0 \cdot x_k^{\;q} \quad &= \quad y_k.
               \label{eqn:odd orthogonal frobenius equation for y_k}
\end{align}
\end{subequations}%
Again we try to eliminate all variables except~$X = x_1$.
The initial steps are very similar to the ones for~$\Sp_{2k}(q)$.
Recursive substitution using~(\ref{eqn:odd orthogonal frobenius equation for x_i}) leads to
\begin{equation}
\label{eqn:odd orthogonal frobenius equation for x_i, direct}
  x_i = x_1^{\;q^{i-1}} = X^{q^{i-1}} \qquad\textrm{for $i\in\{1,\ldots,k\}$},
\end{equation}
hence~\mbox{$x_k^{\;q}=X^{q^k}$}, and~(\ref{eqn:odd orthogonal frobenius equation for y_k}) turns into~$y_k = -c_0 \cdot X^{q^k}$.
This is used as the starting point for successive re-substitution in~(\ref{eqn:odd orthogonal frobenius equation for y_i}), with~$i$ taking the values $k\minus 1,k\minus 2,\ldots,1$.
In this way we derive the formula
\begin{equation}
\label{eqn:odd orthogonal frobenius equation for y_i, direct}
  y_{k-j} \eqdist = \eqdist -\sum_{i=0}^{j} c_{j-i}^{\,q^i} 
                    \cdot X^{q^{k+i}}
                    \qquad\textrm{for $j\in\{0,\ldots,k\minus1\}$}.
\end{equation}
In particular, setting~$j=k\minus1$ and shifting the summation indices up by~1, we get%
$\displaystyle y_1 = -\sum_{i=1}^{k} c_{k-i}^{\,q^{i-1}} \cdot X^{q^{k+i-1}}.$
Next, we plug all the expressions obtained so far for~$x_i$ and~$y_1$ into equations~(\ref{eqn:odd orthogonal frobenius equation for x_1}) and~(\ref{eqn:odd orthogonal frobenius equation for z}).
We incorporate the single occurrence of~$X = x_1$ on the right hand side of~(\ref{eqn:odd orthogonal frobenius equation for x_1}) into the sum on the left by setting~$b_0 = 1$.
Additionally, the positions of~$z$ and~$z^q$ are swapped in the second equation to isolate the term~$z^q$:
\begin{subequations}
\begin{align}
 -\sum_{i=0}^k b_i \cdot X^{q^i} 
               \eqdist - \eqdist s\,t_k \cdot z^q
               \eqdist + \eqdist \sum_{i=1}^{k} s \cdot c_{k-i}^{\,q^i} \cdot X^{q^{k+i}}
	       \quad &= \quad 0,
	       \label{eqn:odd orthogonal, only two left, first one} \\
  t_k \cdot X^{q^k} \; - \; z \quad &= \quad z^q.
               \label{eqn:odd orthogonal, only two left, second one}
\end{align}
\end{subequations}%
We are left with two equations involving only the two variables~$X$ and~$z$.
We now replace the occurrence of~$z^q$ in the first equation with the corresponding term suggested by the second:%
\renewcommand{\eqdist}{\;}
\begin{equation}
 -\sum_{i=0}^k b_i \cdot X^{q^i} 
               \eqdist - \eqdist s\,t_k^{\,2} \cdot X^{q^k}
	       \eqdist + \eqdist s\,t_k \cdot z
               \eqdist + \eqdist \sum_{i=1}^{k} s \cdot c_{k-i}^{\,q^i} \cdot X^{q^{k+i}}
	       \quad = \quad 0.
	       \label{eqn:odd orthogonal, only two left, first one, z^q replaced}
\end{equation}
Afterwards, we raise the resulting equation to the $q$-th power, shift all summation indices up by~1 and extract the last term from the first sum, getting
\renewcommand{\eqdist}{\;\;}
\begin{equation}
\begin{split}
 -\sum_{i=1}^{k} b_{i-1}\toq \cdot X^{q^{i}}
               \eqdist &- \eqdist b_k^{\;q} \cdot X^{q^{k+1}}
	       \eqdist  - \eqdist s^q\,t_k^{\,2q} \cdot X^{q^{k+1}} \\
               \eqdist &+ \eqdist s^q\,t_k^{\,q} \cdot z^q
               \eqdist  + \eqdist \sum_{i=2}^{k+1} s^{\,q} \cdot c_{k-i+1}^{\,q^{i}} \cdot X^{q^{k+i}}
	       \quad = \quad 0.
	       \label{eqn:odd orthogonal, summation equation one}
\end{split}
\end{equation}
\renewcommand{\eqdist}{\;\;}%
To this we add the~$(s\,t_k)^{\,q-1}$-fold multiple of equation~(\ref{eqn:odd orthogonal, only two left, first one}), which we reprint for the convenience of the reader (this time with the first term of the \emph{second} sum extracted, to make summation easier later on).
\begin{equation}
\begin{split}
 -\sum_{i=0}^k (s\,t_k)^{\,q-1} \, b_i \cdot X^{q^i} 
               \eqdist &- \eqdist s^q\,t_k^{\,q} \cdot z^q
	       \eqdist  + \eqdist s^q\,t_k^{\,q-1} \cdot (c_{k-1})^q \cdot X^{q^{k+1}} \\
               \eqdist &+ \eqdist \sum_{i=2}^{k} s^q\,t_k^{\,q-1} \cdot c_{k-i}\toq[i] \cdot X^{q^{k+i}}
	       \quad = \quad 0.
	       \label{eqn:odd orthogonal, summation equation two}
\end{split}
\end{equation}
Adding equations~(\ref{eqn:odd orthogonal, summation equation one}) and~(\ref{eqn:odd orthogonal, summation equation two}) eliminates the variable~$z$ and results in the additive polynomial $f(X)$ from the statement of the theorem (note that we set $c_{-1}=0$).
Now if~$X=x_1$ is fixed then the remaining coordinates~$x_i$,~$y_i$ and~$z$ are determined by equations~(\ref{eqn:odd orthogonal frobenius equation for x_i, direct}),~(\ref{eqn:odd orthogonal frobenius equation for y_i, direct}), and~(\ref{eqn:odd orthogonal, only two left, first one, z^q replaced}) (which is easily solved for~$z$).
The assertion thus follows from Lemma~\ref{lem:galoisgroup}.
\end{proof}

\noindent
We refrain from printing the polynomial~$f$ with the placeholders replaced by the original variables, which would be slightly ugly because would need to extract some terms from the sums.
Instead, we give a few examples for small values of~$k$ to get a feeling for the resulting polynomials (note that for the sake of a little more simplicity these are not normalized).

{
\small
\begin{multline*}
k=1: \quad f(X) \;\;=\;\;
     s^{q^2-q} \cdot X^{q^3}
 \;\;-\;\;  (s \cdot t_1)^{\,q-1} \cdot \big(\frac{s}{2} \cdot t_1^{\;q+1} -1\big) \cdot X^{q^2}\\
 \;\;+\;\;  \big(\frac{s}{2} \cdot t_1^{\;q+1} - 1 \big) \cdot X^q
 \;\;-\;\;  t_1^{\;\,q-1} \cdot X,
\end{multline*}
\vspace{-1.5em}
\begin{multline*}
k=2: \quad  f(X) \;\;=\;\;
     s^{q-q^3}   \cdot X^{q^5}
 \;\;+\;\; s^{q-q^2}   \cdot \Big(t_2^{\;\,q-1}-t_1^{\;q^2}\Big) \cdot X^{q^4}\\
 \;\;-\;\; \Big( \frac{s^q}{2}\cdot t_2^{\;2q} + t_1^{\;q}\,t_2^{\;\,q-1} \Big) \cdot X^{q^3}
 \;\;+\;\; \Big( t_1^{\;q} + \frac{s^q}{2}\cdot t_2^{\;q+1} \Big) \cdot X^{q^2}\\
 \;\;+\;\; \Big( t_1 \cdot \left(s\,t_2\right)^{q-1} - 1 \Big) \cdot X^{q}
 \;\;-\;\; (s\,t_2)^{\,q-1} \cdot X,
\end{multline*}
\vspace{-1.5em}
\begin{multline*}
k=3: \quad  f(X) \;\;=\;\;
     s^{q-q^4}   \cdot X^{q^7}
 \;\;+\;\; s^{q-q^3}   \cdot \Big(t_3^{\;\,q-1}-t_1^{\;q^3}\Big) \cdot X^{q^6}\\
 \;\;-\;\; s^{q-q^2}   \cdot \Big(\left(s t_2\right)^{q^2} + t_3^{\,q-1}t_1^{\;q^2} \Big) \cdot X^{q^5}
 \;\;-\;\; \Big( \left(s t_2\right)^{q}\,t_3^{\;\,q-1} + \frac{s^q}{2}\cdot t_3^{\;2q} \Big) \cdot X^{q^4}\\
 \;\;+\;\; s^q \Big( t_2^{\;q} + \frac{t_3^{\,q+1}}{2} \Big) \cdot X^{q^3}
 \;\;+\;\; \Big( t_1^{\;q} + s^q\,t_3^{\;\,q-1}\,t_2 \Big) \cdot X^{q^2}\\
 \;\;+\;\; \Big( (s\,t_3)^{\,q-1}\,t_1 - 1 \Big) \cdot X^{q}
 \;\;-\;\; (s \, t_3)^{\,q-1} \cdot X.
\end{multline*}

\noindent
We would like to remark that it is indeed essential to introduce the additional indeterminate~$s$ since there are cases where equation~(\ref{eqn:odd orthogonal equation for t_k}) cannot be solved for~$t_k$ if~$s=1$.
This can be seen as follows.
Let $\genname$ be a strong generator of~$\SO_{2k+1}(\Fq)$ of the first form as given in Table~\ref{table:generators}.
Given that its characteristic polynomial~$g$ has roots $\alpha^{\pm q^i}$ and $+1$, it can be written in the form
$\displaystyle g(X) = (X-1) \cdot
  \prod_{i=0}^{k-1} \big(X-\alpha^{q^i}\big) \cdot 
  \prod_{i=0}^{k-1} \big(X-\frac{1}{\alpha^{q^i}}\big).$
Therefore,
\begin{alignat}{3}
g(-1) &= (-2) \cdot \; (-1)^k \prod_{i=0}^{k-1} \Big(1+\alpha^{q^i}\Big)
	          \cdot \; (-1)^k \prod_{i=0}^{k-1} \bigg(\frac{\alpha^{q^i} + 1}{\alpha^{q^i}}\bigg) \notag \\
          &= (-2) \cdot \; \prod_{i=0}^{k-1} \Big(\alpha^{q^i} + 1\Big)^2
	          \cdot \; \bigg(\prod_{i=0}^{k-1} \alpha^{q^i} \bigg)^{-1}.
\end{alignat}
The first product is always a square in~$\Fq$ (note that it belongs to~$\Fq$ because the Frobenius endomorphism simply permutes its factors, leaving the whole product invariant).
The second product can be written as
$\big(\alpha^{q^{k-1}+\ldots+q+1}\big)^{-1} = \big(\alpha^{\frac{q^k-1}{q-1}}\big)^{-1} = \beta^{-1}$,
where~$\beta=\alpha^{\frac{q^{k-1}}{q-1}}$ is a generator of the multiplicative group~$\Fq^\times$ since $\alpha$ has exact order $q^k-1$.
In particular,~$\beta$ is never a square in~$\Fq$ (since~$q$ is odd), nor is~$\beta^{-1}$.
From~(\ref{eqn:odd orthogonal cross section characteristic polynomial revisited}) and~(\ref{eqn:odd orthogonal equation for t_k}) we now deduce $t_k^{\,2} = (-1)^k \cdot g(-1)$ and conclude that this equation is solvable if and only if $(-1)^k \cdot (-2)$ is a non-square in $\Fq$.
There are plenty of examples where this is false -- consider for instance~$q=11, k=2$.

\bigskip\noindent
} % // end \small

\subsection{The Even-Dimensional Special Orthogonal Groups $\SO^+_{2k}(q)$}
\label{sec:Dk}

Let $q$ be odd, $n=2k\geq 4$, $(q,n) \neq (3,4)$ and $J = \left(\begin{smallmatrix} & & 1 \\[-1ex] & \iddots & \\ 1 & & \end{smallmatrix}\right) \in \GL_n(q)$.
The untwisted special orthogonal group is defined as $\SO^+_{2k}(K) = \{ A \in \GL_{n}(K) \;|\; A\tr \cdot J \cdot A = J \}$.
The twisted variant~$\SO^-_{2k}$ is treated in section~\ref{subsect:2D_k}.
As before, we work over the ground field $\grdfield=\Fq(t_1,\ldots,t_k,s)$ which contains an additional indeterminate~$s$.
For $i \leq k-1$, the root subgroups and Weyl group representatives are the ``same'' as in odd dimension, namely $x_i = \idblock{\mathbf{k},\mathbf{k}}{\root[k]{i}{t_i},\root[k]{-i}{-t_i}}$ and $w_i = \idblock{\mathbf{k},\mathbf{k}}{\weylpos_{i,k},\weylpos_{-i,k}}$, where we alter the first representative in the same way as above.
For $i=k$, the matrices are
\begin{equation*}
  x_k = \idblock{(k-2),\mathbf{4},(k-2)}{
    \left(\begin{smallmatrix}
      1 & 0 & t_k &    0 \\
        & 1 &   0 & -t_k \\
        &   &   1 &    0 \\
        &   &     &    1
    \end{smallmatrix}\right)},
  \quad
  w_k = \idblock{(k-2),\mathbf{4},(k-2)}{
    \left(\begin{smallmatrix}
        &   & 1 &    \\
        &   &   & -1 \\
     -1 &   &   &    \\
        & 1 &   &  
    \end{smallmatrix}\right)}.
\end{equation*}
Analogously to the previous case of~$\SO_{2k+1}$, we compute
\begin{equation*}
  \pcs_{k-1} (t_1,\ldots,t_{k-1},s) =
  \idblock{\textbf{k},\textbf{k}}{
    \companion{t_1, s t_2, \ldots, s t_{k-1}, s},
    \companion{\frac{1}{s}, -\frac{t_1}{s}, -t_2, \ldots, -t_{k-1}}
  }.
\end{equation*}
This matrix is multiplied from the right by the last factor
\begin{equation*}
x_k \cdot w_k =
  \idblock{(k-2),\mathbf{4},(k-2)}{
    \left(\begin{smallmatrix}
     -t_k &      & 1 &    \\
          & -t_k &   & -1 \\
      -1  &      & 0 &    \\
          &    1 &   & 0
    \end{smallmatrix}\right)}.
\end{equation*}
The full cross section thus has the shape
\begin{equation}
\label{eqn:even orthogonal cross section}
  \pcs_k(t_1,\ldots,t_k,s)= \left(
    \begin{array}{c@{\;\;}c@{\;\;}c@{\;}c@{\;\;\;}cc|c@{\;\;}c@{\;\;}c@{\;\;}c@{\;\;\;}c@{\;\;\;}cc@{}}
      t_1 & s t_2 & \ldots & s t_{k\minus 2} & -s t_{k-1} t_k & -s t_k & s t_{k-1} & -s & & & & & \\
        1 &       &        &                 &                &        &           &    & & & & & \\
          &     1 &        &                 &                &        &           &    & & & & & \\
          &       & \ddots &                 &                &        &           &    & & & & & \\
          &       &        &              1  &                &        &           &    & & & & & \\[0.5em]
          &       &        &                 &           -t_k &     0  &         1 &    & & & & & \\
     \hline
          &       &        &                 &      t_{k-1}   &     1  &         0 &  0 &        &        &   &   \\
          &       &        &                 &      t_{k-2}   &        &           &  0 &    1   &        &   &   \\
          &       &        &                 &      \vdots    &        &           &    & \ddots & \ddots &   &   \\
          &       &        &                 &       t_2      &        &           &    &        &    0   & 1 &   \\[0.5em]
          &       &        &                 &  \frac{t_1}{s} &        &           &    &        &        & 0 & 1 \\[0.5em]
          &       &        &                 &   -\frac{1}{s} &        &           &    &        &        &   & 0 \\
    \end{array}
  \right).
\end{equation}
Its characteristic polynomial~$h(X)$ is the determinant of $B = (X \cdot \id_n \minus \pcs_k)$. Once more, $B$ has the general shape of~(\ref{eqn:almost bitriangular matrix}), but this time with \emph{two} inserted middle rows and columns (i.e., in the notation of~(\ref{eqn:almost bitriangular matrix}) we have~$r=k\minus1$, where~$\kxk{r}$ are the dimensions of the four corner blocks).
The correspondences between the variables are as follows.
\begin{alignat*}{5}
  b_1     &= -t_1,                           &   \qquad b_i &= -s t_i
          &  \qquad\text{for } &i\in\{2,\ldots,k\minus2\}, \\
  b_{k-1} &= s\,t_{k-1}t_k,   &   \qquad \beta   &= s, \\
  c_0 &= \frac{1}{s},              &   c_1 &= -\frac{t_1}{s},
      &   c_i &= -t_i \quad\text{for } i\in\{2,\ldots,k\minus2\}.
\end{alignat*}
Observe that~$b_i$ and~$c_j$ are only defined for~$1\leq i \leq k\minus1$ and~$0\leq j \leq k\minus2$, respectively;
we define $b_0=1$ (to be used later) and set $b_i=c_j=0$ for other values of $i,j$ (the only purpose of these placeholders is to simplify summations in subsequent computations).
By Corollary~\ref{cor:det of bitriangular matrix}, $h(X)=\det(B)$ equals the following \kxk{4}-determinant.
\begin{align*}
  h(X) &=
  \det \begin{pmatrix}
         g_{k-1} & s\, t_k & -s t_{k-1} & s \\
         t_k     &    X    &     -1     & 0 \\
        -t_{k-1} &   -1    &      X     & 0 \\
         h_{k-2} &    0    &      0     & X^{k-1}
  \end{pmatrix}\\
  &= \left( X^{k+1} \minus X^{k-1} \right) \cdot g_{k-1} 
  \;\;\minus\;\; s \cdot \left( t_k^{\;2} \plus t_{k-1}^{\;2} \right)\cdot X^k\\
  &\hspace*{10.5em}\;\;\plus \;\; 2 s t_k t_{k-1} X^{k-1} %\\[1em]
  \;\;\minus\;\; s \cdot \left( X^2 \minus 1 \right) \cdot h_{k-2}. %
\end{align*}
We resubstitute~$g_{k-1}$ and~$h_{k-2}$, expand products and merge corresponding sums after shifting appropriate indices (recall that we set $b_{k}=b_{k+1}=c_{k-1}=c_k=0$).
\begin{multline*}
 h(X) = \Big(\sum_{i=0}^{k+1} \left(b_i - b_{i-2}\right) X^{2k-i} \Big)
          \;\minus\; s \cdot \left( t_k^{\;2} \plus t_{k-1}^{\;2} \right)\cdot X^k \\
          \;\plus\; 2s t_k t_{k-1} X^{k-1}
          \;\plus\; s \cdot \sum_{i=0}^{k} \left(c_i - c_{i-2}\right) X^{i}. %
\end{multline*}
From both sums we now extract the terms involving the powers $X^k$ and $X^{k-1}$ and combine them with the corresponding ``free'' terms.
Taking into account the definitions of $b_i$ and the fact that~$b_i = s c_i$, we can further collect terms to obtain the final form of the characteristic polynomial:
\begin{equation*}
 h(X) = \sum_{i=0}^{k-1} \left(b_i - b_{i-2}\right) X^{2k-i}
       \;+\; s \cdot \left(2t_{k-2} - t_{k-1}^{\;2} - t_{k}^{\;2} \right) \cdot X^k
       \;+\; \sum_{i=0}^{k-1} \left(b_i - b_{i-2}\right) X^{i}.
       \label{eqn:characteristic polynomial of even-dimensional special orthogonal cross section}
\end{equation*}

\bigskip
\begin{thm}
Let $q$ be odd and let $(M,\Phi)$ be the n-dimensional Frobenius module over $\grdfield=\Fq(t_1,\ldots,t_k,s)$ such that the representing matrix of $\Phi$ equals $D=\pcs_k(t_1,\ldots,t_k,s)$ for some basis.
Then
\begin{enumerate}
  \item $\GalPhi(M) \cong \SO^+_{2k}(q)$.
  \item The solution field of $M$ is generated by the roots of the following additive polynomial $f(X)$;
    in particular, $\Gal(f) = \SO_{2k}^+(q)$.
\begin{multline*}
  \sum_{i=0}^{k-1} \Big( (\delta s)^{q-1} \, t_k^q \, b_i \;+\; \big(\delta^{q-1} \, t_{k-1} - s^{q^2-q} \, t_{k-1}^{q^2}\big) \, b_{i-1}^q \;-\; t_k^q \; b_{i-2}^{q^2} \Big) \cdot X^{q^i} \\
  + \big( (\delta^{q-1} \, t_{k-1} \;-\; s^{q^2-q} \, t_{k-1}^{q^2}) \, b_{k-1}^q \;-\; (b_{k-2}^{q^2} \;-\; 2 \delta^{q-1} \, s^q \, t_{k-1}^{q+1} \;-\;\delta^q \;-\; \delta^{q-1} \, s^q\, c_{k-2}^q) \, t_k^q \big) \cdot X^{q^k} \\
  + (2 s^{q^2} t_{k-1}^{q^2} \, t_k^{q^2+q} \;-\; t_k^q \, b_{k-1}^{q^2} \;+\; \big(s^{q^2} t_{k-1}^{q^2} \;-\; \delta^{q-1} \, s^q \, t_{k-1}\big) \, c_{k-2}^{q^2} \;-\; \delta^{q-1} \, s^q \, t_k^q \, c_{k-3}^{q^2}) \cdot X^{q^{k+1}} \\
  + \sum_{i=2}^{k} \Big(s^{q^2}\, t_k^q\, c_{k-i}^{q^{i+1}} \;+\; \big((s\, t_{k-1})^{q^2} - s^q \delta^{q-1} t_{k-1}\big) c_{k-1-i}^{q^{i+1}} \;-\; \delta^{q-1} \, s^q \, t_k^q \, c_{k-2-i}^{q^{i+1}} \Big) \cdot X^{q^{k+i}},
\end{multline*}
where we define $\delta = s^q\big(t_k^{\,q+1} - t_{k-1}^{\,q+1}\big)$.
\end{enumerate}
\end{thm}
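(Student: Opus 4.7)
The approach follows the pattern established in the preceding sections for $\Sp_{2k}(q)$ and $\SO_{2k+1}(q)$. Since $D = \pcs_k(t_1,\ldots,t_k,s) \in \SO^+_{2k}(\grdfield)$ by construction, Theorem~\ref{thm:upperbound} immediately gives $\GalPhi(M) \leq \SO^+_{2k}(q)$. For the reverse inclusion I would apply Theorem~\ref{thm:lowerbound} by showing that each of the two strong generators $\genname_{1/2}$ from Table~\ref{table:generators} arises, up to $\GL_n$-conjugacy, as a specialization of $D$. Part (b) will then follow by directly solving the Frobenius equation $D \cdot \phiq(x) = x$ and appealing to Lemma~\ref{lem:galoisgroup}.

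For the lower bound, the strategy is to match characteristic polynomials. The derived expression for $h(X)$ is palindromic, and by Lemma~\ref{lem:symmetric coefficients} so is the characteristic polynomial $g(X) = \sum_{j=0}^{k-1} a_j(X^j + X^{2k-j}) + a_k X^k$ of each $\genname_i$, whose spectrum consists of inverse pairs $\alpha^{\pm q^j}$ (and, where applicable, $\beta^{\pm 1}$). Since $a_0 = 1$ is forced by $\det\genname_i = 1$, matching coefficients reduces to the system $b_j - b_{j-2} = a_j$ for $1 \leq j \leq k-1$ together with $s(2t_{k-2} - t_{k-1}^{\,2} - t_k^{\,2}) = a_k$. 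The equations for $j \leq k-2$ recursively determine $t_1, \ldots, t_{k-2}$ in terms of $s$ and the $a_j$; the equation for $j = k-1$ then pins down the product $t_{k-1} t_k$; and the middle equation fixes $t_{k-1}^{\,2} + t_k^{\,2}$. Thus both $(t_{k-1} + t_k)^2$ and $(t_{k-1} - t_k)^2$ are determined as rational expressions in $s$, and it remains to choose $s \in \Fq^\times$ so that both are simultaneously squares in $\Fq$. A counting argument in the spirit of the one at the end of the proof of Theorem~\ref{thm:specialorthogonalodd} handles this, the exclusion $(q,n) \neq (3,4)$ guaranteeing that $|\Fq^\times|$ is large enough to afford such a choice.

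For part (b), write $x = (x_1, \ldots, x_{k-1}, z_1, z_2, y_1, \ldots, y_{k-1})\tr$. The $2(k-1)$ equations coming from the upper-left and lower-right companion blocks of $D$ yield, exactly as in the symplectic and odd-orthogonal cases, $x_i = X^{q^{i-1}}$ with $X = x_1$ together with explicit polynomial expressions for each $y_i$ in the various powers $X^{q^j}$. What remains are the top-row equation, the bottom-row equation, and the two coupled middle equations involving $z_1, z_2$ and $X$. I would solve the two middle equations as a $2 \times 2$ linear system for $z_1^{\,q}$ and $z_2^{\,q}$ in terms of $X$, substitute the resulting expressions into the top-row and bottom-row equations, and then raise to suitable $q$-powers and combine in order to eliminate $z_1, z_2$ altogether. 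The main obstacle is that the determinant of this $2 \times 2$ system is precisely the quantity $\delta = s^q(t_k^{q+1} - t_{k-1}^{q+1})$ appearing in the statement of $f(X)$; tracking this inversion cleanly is what produces the more intricate grouping of summations in the final form of $f$ (as opposed to the single-sum form available for $\SO_{2k+1}(q)$). Once $f$ has been assembled, its degree is $q^{2k} = q^n$, and since every coordinate of $x$ is an explicit polynomial expression in $X$, Lemma~\ref{lem:galoisgroup} yields $\Gal(f) = \GalPhi(M) = \SO^+_{2k}(q)$.
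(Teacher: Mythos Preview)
Your outline for both parts is broadly the same as the paper's, but part (a) contains a genuine gap.

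After determining $t_1,\dots,t_{k-2}$ and the quantities $t_{k-1}t_k$ and $t_{k-1}^2+t_k^2$ (equivalently, after reducing to the biquadratic in $t_k$), you assert that ``a counting argument'' lets you choose $s\in\Fq^\times$ so that both $(t_{k-1}+t_k)^2$ and $(t_{k-1}-t_k)^2$ become squares in $\Fq$. This does not work. In your notation these two quantities are $(-u+2v)/s$ and $(-u-2v)/s$; as $s$ runs over $\Fq^\times$, the square-classes of both expressions flip \emph{together}. Hence either every $s$ works or no $s$ works, depending solely on whether $(-u+2v)(-u-2v)=u^2-4v^2$ is itself a square in $\Fq$. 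No counting over $s$ can rescue the bad case. The paper establishes precisely this: it computes $-u\pm 2v$ in terms of $\charpoly{\genname}(\mp 1)$, uses the factored form $\charpoly{\genname}(X)=\prod_i (X-\alpha_i)(X-\alpha_i^{-1})$, and verifies that the product is an explicit square in $\Fq$ (the square root being $\Fq$-rational because Frobenius permutes the $\alpha_i$). This is the heart of the lower-bound argument for $D_k$, and it is missing from your proposal.

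A smaller point: the exclusion $(q,n)\neq(3,4)$ is not there to make $|\Fq^\times|$ large enough; it is inherited from the strong-generator existence results of \cite{MalleSaxlWeigel} (cf.\ the end of Section~\ref{sec:generators}).

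Your sketch of part (b) is essentially the paper's argument: recursively resolve $x_i$ and $y_i$ from the companion blocks, reduce to a $2\times 2$ linear system in the two ``middle'' unknowns whose determinant is (up to a factor of $s$) the quantity $\delta$, solve and eliminate by raising to the $q$-th power, then invoke Lemma~\ref{lem:galoisgroup}.
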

\begin{proof}
a) The inclusion $\GalPhi(M) \leq \SO_{2k}^+(q)$ follows from Proposition~\ref{thm:upperbound}.
For the reverse inclusion, we first note that $h(X)$ has symmetric coefficients.
According to Lemma~\ref{lem:symmetric coefficients} and Table~\ref{table:generators}, the same holds for the characteristic polynomials of the strong generators of~$\SO_{2k}^+(q)$ which have the form
\begin{equation}
  \label{eqn:characteristic polynomial of torus element, reprint}
  X^{2k} + \sum_{i=1}^{k-1} a_i \cdot X^{2k-i} + a_k \cdot X^k + \sum_{i=1}^{k-1} a_i \cdot X^{i} + 1.
\end{equation}
We need to prove that $h$ can be specialized to polynomials of this shape.
Comparison of coefficients leads to the following system of equations
\begin{subequations}
\begin{alignat}{2}
  1 = a_0, \qquad &-t_1 = a_1, \qquad &&-1-s\, t_2 = a_2, \qquad t_1 - s\,t_3 = a_3,
                               \label{eqn:even orthogonal equation for a_1, a2, a3}\\[1ex]
s \cdot (t_{i-2} &- t_i) &&= \phantom{-}a_i \qquad \textrm{for\quad $i\in\{4,\ldots,k\minus2\}$},
                               \label{eqn:even orthogonal equation for a_i} \\
s \cdot (t_{k-3} &+ t_{k-1}\,t_k) &&= \phantom{-}a_{k-1},
                               \label{eqn:even orthogonal equation for a_k-1} \\
s \cdot (2\, t_{k-2} &- t_{k}^{\;2} - t_{k-1}^{\;2}) \phantom{-}&&= \phantom{-}a_k.
                               \label{eqn:even orthogonal equation for a_k}
\end{alignat}
\end{subequations}
These can be solved recursively to find
$\displaystyle t_i = -\frac{1}{s} \cdot \sum_{j=0}^{\lfloor \frac{i}{2} \rfloor} a_{i-2j}$
for \mbox{$2 \leq i \leq k\minus 2$}.
Then~(\ref{eqn:even orthogonal equation for a_k-1}) implies
$\displaystyle t_{k-1} = \frac{1}{s t_k} \cdot \sum_{j=0}^{\lfloor \frac{k}{2} \rfloor} a_{(k-1)-2j}$,
which in turn is plugged into~(\ref{eqn:even orthogonal equation for a_k}):
\begin{equation*}
  t_k^{\,2} = -\frac{1}{s}   \left( a_k + 2\,a_{k-2} + 2\, a_{k-4} + \ldots\;  \right)
             -\frac{1}{(s\, t_k)^2} \left( a_{k-1} + a_{k-3} + a_{k-5} + \ldots\; \right)^2.
\end{equation*}
Bringing all terms to one side yields a biquadratic equation in the
variable~$t_k$:
\begin{equation}
\begin{split}
  \label{eqn:even orthogonal biquadratic equation for t_k}
  h(t_k)  =  
  t_k^{\,4}  & +  \frac{1}{s} 
                                \left( a_k + 2\,a_{k-2} + 2\, a_{k-4} + \ldots\;  \right) \cdot t_k^{\,2}
               +  \frac{1}{s^2}
	                        \left( a_{k-1} + a_{k-3} + \ldots\; \right)^2 
	       =  0.
\end{split}
\end{equation}
Assume that~$s$ has already been specialized to an element of~$\Fq^\times$.
Then the above expression is a biquadratic polynomial in~$t_k$ with coefficients in~$\Fq$.
We abbreviate these coefficients by $u$ and $v$ so that the equation reads as follows.
\[
 h(t_k) = t_k^4 + \frac{u}{s} \cdot t_k^{2} + \left(\frac{v}{s}\right)^2 = 0.
\]
This is a quadratic equation in $t_k^2$ with discriminant $(u^2-4v^2)/s^2$ which we claim is a square in $\Fq$,
say, $u^2-4v^2 = c^2$ with $c \in \Fq$.
Then we have the solutions $t_k^2=\frac{-u \pm c}{2s}$, and the right hand side becomes a square after a suitable choice of $s \in \Fq^\times$ so that the equation is solvable for~$t_k$.

In order to prove that the discriminant is a square it suffices to show that~$(-u-2v)$ and~$(-u+2v)$ are either both squares or non-squares in~$\Fq$, or equivalently that their product is a square.
Let $\genname$ be one of the strong generators of~$\SO_{2k}^+(q)$ from Table~\ref{table:generators} and let $\charpoly{\genname}$ be its characteristic polynomial.
Denote the first $k$ roots of $\charpoly{\genname}$ by $\alpha_1, \ldots, \alpha_k$ so that the remaining $k$ roots are their inverses.
Then $\charpoly{\genname}$ can be written in two ways, as follows.
\begin{subequations}
\begin{align}
\charpoly{\genname}(x) \;&=\; x^{2k} \;\;+\;\; \sum_{i=1}^{k-1} a_i\, x^{2k-i} \;\;+\;\; a_k\, x^k \;\;+\;\; 
                             \sum_{i=1}^{k-1} a_i\, x^i \;\;+\;\;1,
			     \label{eqn:characteristic polynomial of t, first version} \\
\charpoly{\genname}(x) \;&=\; \textstyle \prod_{i=1}^{k} \; \big(x-\alpha_i\big) \cdot \big(x-\alpha_i^{\,-1}\big)\;.
			     \label{eqn:characteristic polynomial of t, second version}
\end{align}
\end{subequations}
Using~(\ref{eqn:characteristic polynomial of t, first version}) we obtain the following identities
\begin{alignat*}{2}
  -u + 2v \;&=\; -a_k + 2 \cdot \sum_{i=1}^{k} (-1)^{i+1} \cdot a_{k-i} &
                \;\;&=\;\; (-1)^{k+1} \cdot \charpoly{\genname}(-1), \\
  -u - 2v
          \;&=\; - a_k - 2\cdot\sum_{i=1}^{k} a_{k-i}&
        \;\;&=\;\; (-1) \cdot \charpoly{\genname}(1).
\end{alignat*}%
Using equation~(\ref{eqn:characteristic polynomial of t, second version}) we can further transform~$\charpoly{\genname}(1)$ and~$\charpoly{\genname}(-1)$ as follows:
\begin{equation*}
  \charpoly{\genname}(-1) \;=\; \prod_{i=1}^k \frac{\big( 1+\alpha_i \big)^2}{\alpha_i},
  \qquad
  \charpoly{\genname}(1)  \;=\; (-1)^k \cdot \prod_{i=1}^k \frac{\big( 1-\alpha_i \big)^2}{\alpha_i}.
\end{equation*}
Putting everything together yields
\begin{equation}
  (-u+2v)\cdot(-u-2v) \;=\; (-1)^{k+2} \cdot \charpoly{\genname}(1) \cdot \charpoly{\genname}(-1)
                      \;=\; \prod_{i=1}^k \frac{\big(1+\alpha_i\big)^2 \cdot \big(1-\alpha_i\big)^2}
			                                {(\alpha_i)^2}. \label{eqn:lalelu}
\end{equation}
Note that the square root of the right hand side belongs to $\Fq$ since the Frobenius endomorphism simply permutes the elements~$\alpha_i$.

\medskip\noindent
b) To derive a polynomial with Galois group $\SO_{2k}^+(q)$ we solve the defining equation~$D\cdot\phiq(x) = x$ for the solution space of~$(M,\Phi)$, where $D=\pcs_k$ is the cross section matrix from~(\ref{eqn:even orthogonal cross section}).
We write $x=(x_1,\ldots,x_k,y_1,\ldots,y_k)\tr$.
Then the equation is equivalent to the following system.
\begin{subequations}
\begin{align}
  -\sum_{i=1}^{k-1} b_i \cdot x_i^{\;q} 
                                    \;-\; s\,t_k \cdot x_k^{\:q}
                                      +   s\,t_{k-1} \cdot y_1^{\:q}
                                      -   s \cdot y_2^{\:q} \; &= \; x_1,
               \label{eqn:even orthogonal frobenius equation for x_1} \\[-2ex]
  x_{i-1}\toq \; &= \; x_i \quad\textrm{for}\;\;{i\in\{2,\ldots,k\minus1\}},
	       \label{eqn:even orthogonal frobenius equation for x_i} \\
  -t_k \cdot x_{k-1}\toq  +  y_1^{\;q} \; &= \; x_k,
	       \label{eqn:even orthogonal frobenius equation for x_k} \\[1em]
  t_{k-1} \cdot x_{k-1}\toq  +   x_{k}^{\;q} \; &= \; y_1,
               \label{eqn:even orthogonal frobenius equation for y_1} \\
  -c_{k-i} \cdot x_{k-1}\toq  +  y_{i+1}\toq
               \; &= \; y_i \quad\textrm{for}\;\;{i\in\{2,\ldots,k\minus 1\}},
               \label{eqn:even orthogonal frobenius equation for y_i} \\
  -c_0 \cdot x_{k-1}\toq \; &= \; y_k.
               \label{eqn:even orthogonal frobenius equation for y_k}
\end{align}%
\end{subequations}%
Let~$X \coloneqq x_1$. Equation~(\ref{eqn:even orthogonal frobenius equation for x_i}) implies that~$x_i=X^{q^{i-1}}$, for~$i\in\{1,\ldots,k\minus1\}$.
Hence,~(\ref{eqn:even orthogonal frobenius equation for y_k}) becomes~$-c_0 \cdot X^{q^{k-1}} = y_k$, which we resubstitute into~(\ref{eqn:even orthogonal frobenius equation for y_i}).
By inductive substitution, we obtain the following formula for the~$y_i$.
\begin{equation}
  \label{eqn:even orthogonal equation closed expression for y_i}
  y_{k-j} = -\sum_{i=0}^j c_{j-i}\toq[i] \cdot X^{q^{(k-1)+i}}, \qquad\textrm{for}\;\;{j\in\{0,\ldots,k\minus2\}}.
\end{equation}
In particular, the case~$j=k\minus2$ yields
$\displaystyle y_{2} = -\sum_{i=0}^{k-2} c_{k-2-i}^{\;q^i} \cdot X^{q^{(k-1)+i}}$.
Using these identities, we transform equations~(\ref{eqn:even orthogonal frobenius equation for x_1}),~(\ref{eqn:even orthogonal frobenius equation for x_k}),~(\ref{eqn:even orthogonal frobenius equation for y_1}) in the following way (note the comments below):
\renewcommand{\eqdist}{\;}
\begin{subequations}
\begin{equation}
  \underbrace{%
  -\sum_{i=0}^{k-1} b_i \cdot X^{q^i} 
          \eqdist + \eqdist \sum_{i=0}^{k-2} s \cdot c_{k-2-i}^{\;q^{i+1}} \cdot X^{q^{k+i}}%
	  }_{g(X)}
          \eqdist + \eqdist s\,t_{k-1} \cdot y_1^{\:q} 
          \eqdist - \eqdist s\, t_k \cdot x_k^{\:q} \quad = \quad 0,
          \label{eqn:even orthogonal linear system, equation 1}
\end{equation}
\vspace*{-1.5em}
\begin{alignat}{2}
  \qquad\qquad y_1^{\;q} &\eqdist=\eqdist x_k \eqdist+\eqdist &t_k \:\cdot\: &X^{q^{k-1}},
                    \label{eqn:even orthogonal blabla1} \\
  \qquad\qquad x_k^{\;q} &\eqdist=\eqdist y_1 \eqdist-\eqdist & t_{k-1}\:\cdot\: &X^{q^{k-1}}.
                    \label{eqn:even orthogonal blabla2}
\end{alignat}
\end{subequations}%
In the first equation we set~$b_0 = 1$, which accounts for the single term~$x_1$ originally found on the right hand side.
We also used the identity~$x_i = X^{q^{i-1}}$ from above.
We now abbreviate the two sums by~$g(X)$ as indicated, substitute the right hand sides of the other two equations for~$y_1\q$ and~$x_k\q$ and raise the whole equation to the $q$-th power.
\begin{equation}
  g(X)^q + (s t_{k-1})^q \; x_k^q - (s t_k)^{\,q} \; y_1^{\,q} + 2 (s t_{k-1} t_k)^q X^{q^k} = 0
  \label{eqn:even orthogonal linear system, equation 2}
\end{equation}
We have now arrived at a system~(\ref{eqn:even orthogonal linear system, equation 1}),~(\ref{eqn:even orthogonal linear system, equation 2}) of two linear equations in the terms~$y_1\q$ and~$x_k\q$.
It has the solution
\begin{equation}
\begin{split}
  y_1\q &= 1/\delta \cdot \big( s^{q-1}\, t_{k-1}^{\,q} \cdot g(X) + t_k \cdot g(X)^q + 2(s\,t_{k-1})^q\, t_k^{\,q+1} X^{q^k} \big)\\
  x_k\q &= 1/\delta \cdot \big( s^{q-1}\, t_k^{\,q} \cdot g(X) + t_{k-1} \cdot g(X)^q + 2(s\,t_k)^q\, t_{k-1}^{\,q+1} X^{q^k} \big)
  \label{eqn:solution of 2x2-system}
\end{split}
\end{equation}
where we set~$\delta = s^q \, \big( t_k^{\,q+1} - t_{k-1}^{\, q+1} \big)$ (which is the negative determinant of the system divided by~$s$).
Equipped with these solutions, we can eliminate~$y_1$ and~$x_k$ altogether by raising equation~(\ref{eqn:even orthogonal blabla1}) to the $q$-th power, substituting according to~(\ref{eqn:solution of 2x2-system}) and multiplying by~$\delta^q$:
\begin{multline}
t_k\q \cdot g(X)^{q^2} + \big( s^{q^2-q} \cdot t_{k-1}^{\,q^2} - \delta^{q-1} \cdot t_{k-1} \big) \cdot g(X)^q - (\delta s)^{q-1} t_k\q \cdot g(X) \\
   + 2 s^{q^2} t_{k-1}^{q^2} t_k^{q^2+q} \cdot X^{q^{k+1}}
   - \big( 2 \delta^{q-1} s^q \, t_{k-1}^{\;q+1} \, t_k^{\;q} + \delta^q t_k^{\;q} \big) X^{q^k} = 0
\end{multline}%
After resubstituting~$g(X)$ and collecting terms (including index-shifts, where special care must be taken with respect to the summands involving the powers~$X^{q^k}$ and~$X^{q^{k+1}}$) we end up with the polynomial $f(X)$ in the statement of the theorem.
By construction, every first coordinate $x_1$ of a solution to $D\cdot \phiq(x) = x$ is a root of $f$.
Vice versa, every choice of $X=x_1$ determines $x_2,\ldots,x_{k-1}$ and $y_2,\ldots,y_k$ through~(\ref{eqn:even orthogonal frobenius equation for x_i}), (\ref{eqn:even orthogonal equation closed expression for y_i});
$x_k$ and $y_1$ are then determined via~(\ref{eqn:even orthogonal blabla1}), (\ref{eqn:even orthogonal blabla2}) and~(\ref{eqn:even orthogonal blabla2}).
Lemma~\ref{lem:galoisgroup} asserts that $\Gal(f) = \GalPhi(M) = \SO_{2k}^+(q)$.
\end{proof}

\medskip
\noindent
Note that the proof that the biquadratic polynomial $h$~always possesses a nonzero root in~$\Fq$ (for a suitable choice of~$s$) only works in odd characteristic.
As a matter of fact,\vspace{0.5ex} if $q$ is even there exist biquadratic polynomials of the form~$h=t^4 + \frac{a}{s} \cdot t^2 + \big(\frac{b}{s}\big)^2 \;\; (a,b \in \Fq)$\vspace{0.5ex} such that~$h$ does not have a root in~$\Fq$ for \emph{any} choice of~$s\in\Fq^\times$.
For example, take~$q=8$ and~$h=t^4+\frac{1}{s} \cdot t^2 + (\frac{1}{s})^2$ (in fact, the same polynomial seems to be a counterexample when~$q$ is any odd power of~2).
However, we still have the possibility of choosing a different semisimple generator (i.e., different values for~$\alpha$ and~$\beta$ in Table~\ref{table:generators}), resulting in different coefficients of its characteristic polynomial~$\charpoly{\genname}$ and hence also in a different biquadratic polynomial~$h$.
One might hope that for an appropriate choice of $\alpha, \beta \in \F_{q^k}$, $h$~has a nonzero root in~$\Fq$.
We conjecture that this is in fact true for \emph{every} choice of~$\alpha$ and~$\beta$.
Experiments with~$\textsc{Magma}$~\cite{Magma} seem to support this.
If this is indeed the case, our method can also be used to realize the groups~$\SO_{2k}^+(q)$ in even characteristic.

\bigskip\noindent
Instead of resubstituting the placeholders by the original variables in the general polynomial, we give an example of~$f$ for the case~$k=2$ (not normalized).
\begin{multline*}
  f(X) = \frac{t_2^q}{s^{q^3-q^2-q}} \cdot X^{q^4}
     \;+\; \big(\frac{t_1^{q^2}}{s^{q^2-q}} - s^q t_1^{q^2} t_2^{q^2+q} + 2 s^q t_1^{q^2} t_2^{q^2+q} - \delta^{q-1}\frac{t_1}{s^{q^2-q}}\big) \cdot X^{q^3} \\
     \;-\; t_2^q \cdot \big(t_1^{q^2+q} + s^q + \delta^{q-1} s + \delta^{q-1} t_1^{q+1} + \delta^q\big) \cdot X^{q^2} \\
     \;+\; \big(\delta^{q-1} s^q t_1 t_2^{q+1} + \delta^{q-1} t_1 - t_1^{q^2}\big) \cdot X^q
     \;+\; \delta^{q-1} s^q t_2^q \cdot X.
\end{multline*}
Here,
$\delta \;=\; \big( (s t_2)^{\,q+1} - t_1^{\;q+1} \big)^{-1}$, and~$f$ has coefficients in the field~$\Fq(t_1, t_2, s)$.

\section{Polynomials for the Twisted Classical Groups}
\label{sec:polynomials for the twisted classical groups}
\subsection{The Special Unitary Groups $\SU_n(q)$}

\noindent In this section we realize the special unitary group $\SU_n(q)$ for $n\geq 3$ and $(n,q)\neq (3,2)$ over $K=\F_{q^2}(t_1,\dots,t_k)$, $k=\left\lfloor \frac{n}{2}\right\rfloor$. 
Let $J$ denote $\left(\begin{smallmatrix} & & 1 \\[-1ex] & \iddots & \\ 1 & & \end{smallmatrix}\right) \in \GL_n$ and define $F: \SL_{n} \rightarrow \SL_{n}$, $\ A\mapsto J \cdot \phi_{q}(A)^{tr,-1} \cdot J$.
Then $F$ is a Frobenius map with $F^2=\phi_{q^2}$ and the special unitary group is defined as $\SU_{n}(q)=\SL_{n}^F\leq \SL_{n}(\mathbb{F}_{q^2})$.
We choose $\root{i}{t_i}$ as the $i$-th root subgroup and $\weylneg_i = \weylneg_{i,k}$ as the corresponding reflection.
Then $F\big(\root{i}{t_i}\big)=\root{n-i}{t_i^{-q}}$ and $F(\weylneg_i)=-\weylneg_{n-i}$.
We would like to construct a Frobenius module whose representing matrix $D$ is the product of some of the elements $\root{i}{t_i}$ and~$\weylneg_i$.
Depending on the parity of $n$, however, $F$ might fix the simple root in the middle.
We therefore distinguish between odd and even~$n$.

\bigskip
\noindent
Let first $n=2k+1$.
We would like to construct a matrix $D=\zwmat \cdot F(\zwmat)$ such that $D$ lies in Steinberg's cross section $\prod_{i=1}^k X_i w_i$.
Therefore, we define
$$\zwmattilde = \prod \limits_{i=1}^{k} \root{i}{t_i} \weylneg_i = \idblock{\mathbf{(k+1)},k}{\companion{-t_1, \ldots, -t_k, 1}}.$$
To make subsequent computations easier, we make the following modification:
\[
  \zwmat=\idblock{k,\mathbf{(k+1)}}{\companionrev{(-1)^k,(-1)^k t_1, \ldots, (-1)^k t_k}} \in \SL_{2k+1}(\mathbb{F}_{q^2}(t_1,\dots,t_k)).
\]
In order to compute $F(\zwmat)=J \cdot \phi_q(\zwmat)^{tr,-1} \cdot J$, we first compute
\[
  \zwmat^{-1} = \idblock{k,\mathbf{(k+1)}}{\companiontwo{-t_1, \ldots, -t_k, (-1)^k}}.
\]
Thus
$\phi_q(\zwmat)^{tr,-1} = \idblock{k,\mathbf{(k+1)}}{\companionrevtwo{-t_1^q, \ldots, -t_k^q, (-1)^k}}$ and conjugating by $J$ yields $F(\zwmat) = \idblock{\mathbf{(k+1)},k}{\companion{-t_1^q, \ldots, -t_k^q, (-1)^k}}$.
Finally, we obtain
\begin{displaymath}
D = \zwmat \cdot F(\zwmat)
  = \left( \begin{array}{cccc|c|cccc}
	-t_1^q&\dots&-t_{k-1}^q&-t_k^q &(-1)^k&&&\\ 
	1&&&&&& \\
	&\ddots& &&&&\\ 
	&&1&0&&&    \\ \hline
	&&&(-1)^kt_{k}&0&1&&    \\ \hline
	&&&(-1)^kt_{k-1}&&0&1& \\
	&&&\vdots &&& \ddots &\ddots &\\
	&&&(-1)^kt_1&&&&0&1  \\
	&&&(-1)^k&&&&&0
\end{array} \right).
\end{displaymath}
The characteristic polynomial $h$ of $D$ is the determinant of $B=(X\cdot \id_{2k+1}-D)$.
Note that $B$ has precisely the form of (\ref{eqn:almost bitriangular matrix}), where $r=k$ and there is one extra row and column.
The entries are $b_i=t_i^q$, $c_0=(-1)^{k-1}$, $c_i=(-1)^{k-1}t_i$.
Applying Corollary~\ref{cor:det of bitriangular matrix} yields 
\[
h(X)=\det
\begin{pmatrix}
	h_k& (-1)^{k-1}&0 \\
	(-1)^{k-1}t_{k}&X&-1 \\
	h_{k-1}&0&X^k \\
\end{pmatrix}=h_k X^{k+1}-X^k t_k + (-1)^k h_{k-1}.
\]
Substituting $g_k$ and $h_{k-1}$, we conclude 
$$h(X)=X^{2k+1}+\sum \limits _{i=1}^{k}t_i^q X^{2k+1-i} - X^kt_k - \sum \limits_{i=1}^{k-1}t_i X^i -1.$$

\begin{thm}\label{galois_su_odd}
Let $n=2k\plus 1$ be odd, $(n,q) \neq (3,2)$, and let $(M,\Phi)$ be the $n$-dimensional Frobenius module over $K=(\mathbb{F}_{q^2}(t_1,\dots,t_k), \phi_{q^2})$ such that the representing matrix of $\Phi$ equals $D$ for some basis $B$ of $M$.
Then
\begin{enumerate}
  \item $\Gal^\Phi(M)\cong \SU_{2k+1}(q)$.
  \item The solution field of $M$ is generated by the roots of the additive polynomial
    $$f(X)=X^{(q^2)^n}+ \sum \limits_{i=1}^{k}t_i^{(q^2)^{k+1-i}}X^{(q^2)^{n-i}}- \sum \limits_{i=1}^{k}t_i^q X^{(q^2)^i} -X \in \Fq(t_1,\dots,t_k)[X].$$ 
    In particular, $\Gal_{K}(f)\cong \SU_{2k+1}(q).$
\end{enumerate}
\end{thm}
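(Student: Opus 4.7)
The plan is to follow the unified template of Section~\ref{sec:overview of the method}. For the upper bound in part~(a), Theorem~\ref{thm:twupperbound} applies with $s=2$ and $\mathcal{G}=\SL_{2k+1}$: by construction $D=\widetilde{D}_0\cdot F(\widetilde{D}_0)$ with $\widetilde{D}_0\in\SL_{2k+1}(K)$, so $\GalPhi(M)\le \SL_{2k+1}^F=\SU_{2k+1}(q)$.

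For the lower bound I would feed Theorem~\ref{thm:lowerbound}(ii) one specialization per strong generator $\genname$ of $\SU_{2k+1}(q)$ from Table~\ref{table:generators}. The key input is the characteristic polynomial $h$ of $D$ already computed above. Its coefficients satisfy the Frobenius identity $a_{n-i}=-a_i^q$ (with $n=2k+1$) as an equality in $\F_{q^2}(t_1,\dots,t_k)[X]$: indeed $a_i=-t_i$ for $1\le i\le k$ while $a_{n-i}=t_i^q=-(-t_i)^q$. Meanwhile, Lemma~\ref{lem:symmetry SU} shows that $\charpoly{\genname}$ enjoys exactly the same symmetry $c_i=-c_{n-i}^q$, because the diagonalization $\bdiagtilde{n}{\alpha}$ has its entries closed under $\beta\mapsto\beta^{-q}$. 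Writing $\charpoly{\genname}(X)=X^n+\sum_{i=1}^{n-1}c_i X^i-1$, the specialization $t_i\mapsto -c_i\in\F_{q^2}$ for $1\le i\le k$ then matches the lower half of the coefficients directly, and the upper half automatically through the shared symmetry. The specialized matrix $\widehat D$ therefore has the same separable characteristic polynomial as $\genname$ and is thus $\SL_n(\Fqbar)$-conjugate to it; Theorem~\ref{thm:lowerbound}(ii) places such a conjugate inside $\GalPhi(M)$, and the strong-generator property of Table~\ref{table:generators} yields $\GalPhi(M)=\SU_{2k+1}(q)$.

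For part~(b), I would expand $D\cdot\phi_{q^2}(x)=x$ with $x=(x_1,\dots,x_{2k+1})\tr$ into $2k+1$ scalar equations and eliminate all coordinates in favour of $X\coloneqq x_1$. The shift rows of the upper block give $x_i=X^{q^{2(i-1)}}$ for $1\le i\le k$, the bottom row gives $x_{2k+1}=(-1)^k X^{q^{2k}}$, and backward substitution through the lower block (from $i=k-1$ down to $1$) inductively produces
\[
  x_{2k+1-j} \;=\; (-1)^k\sum_{l=0}^{j} t_{j-l}^{q^{2l}}\,X^{q^{2(k+l)}}\qquad(t_0\coloneqq 1),
\]
with the middle row delivering $x_{k+1}$ by exactly the same pattern. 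Substituting these closed expressions back into the first-row equation and performing a single shift of the summation index in the resulting double sum produces precisely the polynomial $f(X)$ displayed in the theorem. Since $X$ determines all other coordinates uniquely through the formulas above and $\deg f=(q^2)^n$, Lemma~\ref{lem:galoisgroup} gives $\Gal_K(f)\cong\GalPhi(M)\cong\SU_{2k+1}(q)$.

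The main obstacle is the specialization argument in~(a): one must recognise that $h$ has been built so that the $t_i$ appear freely in the lower-degree coefficients and as their Frobenius twists $t_i^q$ in the higher-degree coefficients, so that $k$ choices $t_i=-c_i\in\F_{q^2}$ simultaneously pin down both halves in a manner exactly compatible with the Frobenius symmetry of~$\charpoly{\genname}$. Once this observation is in hand, everything else reduces to routine index bookkeeping and an appeal to Lemma~\ref{lem:galoisgroup}.
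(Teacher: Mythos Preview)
Your proposal is correct and follows essentially the same route as the paper: the upper bound via Theorem~\ref{thm:twupperbound}, the lower bound by matching the characteristic polynomial $h$ of $D$ against $\charpoly{\genname}$ through the Frobenius symmetry of Lemma~\ref{lem:symmetry SU} (the paper phrases the specialization as $t_j\mapsto a_j$ where $\charpoly{\genname}=X^n+\sum a_j^qX^{n-j}-\sum a_jX^j-1$, which is your $t_i\mapsto -c_i$), and the same row-by-row elimination ending with Lemma~\ref{lem:galoisgroup}. The only cosmetic slip is that the factorization is $D=D_0\cdot F(D_0)$ with the sign-adjusted matrix $D_0$, not the preliminary $\widetilde{D}_0$.
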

\begin{proof}
a) By construction of $D$ and Theorem~\ref{thm:twupperbound}, we have $\Gal^\Phi(M) \leq \SU_{2k+1}(q)$.
By Lemma~\ref{lem:symmetry SU}, the generators of $\SU_{2k+1}(q)$ given in Table~\ref{table:generators} have separable characteristic polynomials of the form
$\displaystyle X^n+ \sum \limits_{j=1}^{k}a_j^qX^{n-j}-\sum_{j=1}^k a_jX^j -1$
for suitable $a_j \in \mathbb{F}_{q^2}$.
Thus the characteristic polynomial of $D$ specializes to the characteristic polynomials of the generators under the specializations $\psi_i: \mathbb{F}_{q^2}[t_1,\dots,t_k] \to \mathbb{F}_{q^2}$, $t_j \mapsto a_j$.

\medskip\noindent
b) We solve the equation $D \cdot \phi_{q^2}(x)=x$, where $x=(x_1,\dots,x_{2k+1})$.
Letting $X=x_1$, we evaluate the second to $k$-th row successively:
$x_i = x_{i-1}^{q^2} = X^{(q^2)^{i-1}}, \ 2\leq i \leq k$.
Plugging this equation for $i=k$ in the last row yields $x_n=(-1)^k x_{k}^{q^2}=(-1)^k X^{(q^2)^{k}}$.
Now we can evaluate the $(2k+1)$-th through $(k+1)$-th row, consecutively:
\begin{eqnarray*}
	x_{n-1}&=&(-1)^k t_1 x_k^{q^2}+x_n^{q^2}=(-1)^kt_1 X^{(q^2)^k} + (-1)^k X^{(q^2)^{k+1}}, \\
	&\vdots&\\
	x_{n-i}&=&(-1)^k t_i x_k^{(q^2)}+x_{n-i+1}^{(q^2)}\\
	&=&(-1)^k \big(t_i X^{(q^2)^k}+t_{i-1}^{q^2} X^{(q^2)^{k+1}}+\dots+t_1^{(q^2)^{i-1}}X^{(q^2)^{k+i-1}}+X^{(q^2)^{k+i}}\big),\\
	&\vdots&\\ 
	x_{k+1}	&=&(-1)^k \big(t_k X^{(q^2)^k}+t_{k-1}^{q^2}X^{(q^2)^{k+1}}+\dots+t_1^{(q^2)^{k-1}}X^{(q^2)^{2k-1}}+X^{(q^2)^{2k}}\big).
\end{eqnarray*}
This proves that the coordinates $x_2,\dots,x_n$ are uniquely and polynomially determined by $X=x_1$.
Plugging all these equations into the first row yields the polynomial $f(X)$ from the statement of the theorem.
The claim follows from Lemma~\ref{lem:galoisgroup}.
\end{proof}

\noindent
To remark the special $q$-palindromic shape of the obtained polynomials, we give some examples for small $n$.
\begin{align*}
  n=3: \quad &X^{(q^2)^3}+t^{q^2}X^{(q^2)^2}-t^qX^{q^2}-X \qquad\in \F_{q^2}(t),\\
  n=5: \quad &X^{(q^2)^5}+t^{(q^2)^2}X^{(q^2)^4}+s^{q^2}X^{(q^2)^3}-s^qX^{(q^2)^2}-t^qX^{q^2}-X \quad\in \F_{q^2}(t,s),\\
  n=7: \quad &X^{(q^2)^7}+t^{(q^2)^3}X^{(q^2)^6}+s^{(q^2)^2}X^{(q^2)^5}+
	r^{q^2}X^{(q^2)^4}-r^qX^{(q^2)^3}\\
             &\hspace{12em} -s^qX^{(q^2)^2}-t^qX^{q^2}-X \quad\in \F_{q^2}(t,s,r).
\end{align*}

\bigskip
\noindent
Let now $n=2k$.
Motivated by the case $n=2k \plus 1$, we define
\[
 \zwmat=\idblock{(k-1),\mathbf{(k+1)}}{%
    \companionrev{(-1)^k,(-1)^k t_1, \ldots, (-1)^k t_k}
     } \in \SL_{2k}(\mathbb{F}_{q^2}(t_1,\dots,t_k)).
\]
As above, we get $F(\zwmat)=\idblock{\mathbf{(k+1)},(k-1)}{\companion{-t_1^q, \ldots, -t_k^q, (-1)^k}}$.
We define $D=\zwmat \cdot F(\zwmat)$ and compute:
\[
D=
\left( \begin{array}{ccccc|ccccc} 
	-t_1^q&\dots&-t_{k-2}^q&-t_{k-1}^q&-t_k^q &(-1)^k&&\\ 
	1&&&&&& \\
	&\ddots&&&&&\\ 
	&&1&&&&&& \\
	&&&(-1)^kt_{k}&1&&&& \\ \hline
	&&&(-1)^kt_{k-1}&0&0&1&&& \\
	&&&(-1)^kt_{k-2}&&&0&1&& \\
	&&&\vdots &&&&\ddots& \ddots \\
	&&&(-1)^kt_1&&&&&&1  \\
	&&&(-1)^k&&&&&&0
\end{array} \right).
\]
The characteristic polynomial $h$ of $D$ is the determinant of $B=(X\cdot \id_{2k+1}-D)$.
Note that $B$ has precisely the form of (\ref{eqn:almost bitriangular matrix}), where $r=k-1$ and there are two extra rows and columns.
The entries are $b_i=t_i^q$, $c_0=(-1)^{k-1}$, $c_i=(-1)^{k-1}t_i$.
Applying Corollary~\ref{cor:det of bitriangular matrix} yields
\[
h(X)= \det
\begin{pmatrix}
	h_{k-1}& t_k^q&(-1)^{k-1}&0 \\
	(-1)^{k-1}t_{k}&X-1&0&0 \\
	(-1)^{k-1}t_{k-1}&0&X&-1 \\
	h_{k-2}&0&0&X^{k-1} 
\end{pmatrix}.
\]
Computing this determinant and substituting~$g_{k-1}$ and~$h_{k-2}$ leads to the following polynomial (note that in the last equation we set $t_0\coloneqq 1$).
\begin{align}
h(X) &=  h_{k-2}\cdot(X-1)(-1)^k + X^{k-1}\cdot\big(g_{k-1}\cdot X(X-1)-t_{k-1}(X-1)+(-1)^k X\, t_k^{q+1}\big) \nonumber \\
     &= X^n \;+\; \sum_{i=1}^{k-1} (t_i^q-t_{i-1}^q)\cdot X^{n-i}
            \;+\; \big(-t_{k-1}-t_{k-1}^q+(-1)^k \cdot t_{k+1}^{q+1}\big) \cdot X^k \nonumber \\
     &\qquad\;\;\;\;+\; \sum \limits _{i=1}^{k-1} (t_i-t_{i-1})\cdot X^{i} \;\;+\; 1 .
\label{eqn:cross section characteristic polynomial for SU_2k}
\end{align}

\begin{thm}
Let $n=2k$ be even and let $(M,\Phi)$ be the $n$-dimensional Frobenius module over $K=(\F_{q^2}(t_1,\dots,t_k), \phi_{q^2})$ such that the representing matrix of $\Phi$ equals $D$ for some basis $B$ of $M$.
Then
\begin{enumerate}
  \item $\Gal^\Phi(M) \cong \SU_{2k}(q)$.
  \item The solution field of $M$ is generated by the roots of the additive polynomial
    \begin{eqnarray*}
      f(X) &=& X^{(q^2)^n}+\sum\limits_{i=1}^{k-1}(t_i^{(q^2)^{k+1-i}}-t_k^{q^3-q}t_{i-1}^{(q^2)^{k+1-i}})X^{(q^2)^{n-i}}\\
           && +(-t_{k-1}^{q^2}t_k^{q^3-q}-t_{k-1}^{q^3} +(-1)^k t_k^{q^2(q+1)})X^{(q^2)^k} \\
           && + \sum\limits_{i=1}^{k-1}(-t_{i-1}^{(q^3)}+t_k^{q^3-q}t_{i}^{q})X^{(q^2)^{i}}+t_k^{q^3-q}X \in \Fq(t_1,\dots,t_k)[X],
    \end{eqnarray*}
    where $t_0=1$.
    In particular, $\Gal_{K}(f)\cong \SU_{2k}(q).$
\end{enumerate}
\end{thm}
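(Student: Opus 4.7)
The plan is to mirror the proof of Theorem~\ref{galois_su_odd} for the odd case. Part~(a) rests on the cross-section style factorisation $D=\zwmat\cdot F(\zwmat)$ with $\zwmat\in\SL_{2k}(K)$, so Theorem~\ref{thm:twupperbound} immediately yields $\Gal^\Phi(M)\leq \SU_{2k}(q)$. For the reverse inclusion I would exploit the explicit formula~(\ref{eqn:cross section characteristic polynomial for SU_2k}) for $\charpoly{D}$: its coefficient of $X^i$ equals $t_i-t_{i-1}$, its coefficient of $X^{n-i}$ equals $(t_i-t_{i-1})^q$, and its middle coefficient is $(-1)^k t_k^{q+1}-t_{k-1}-t_{k-1}^q$. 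After any specialisation of the $t_i$ to values in $\F_{q^2}$ this middle coefficient automatically lies in $\F_q$, as the difference of a norm and a trace, so $h$ has precisely the symmetry demanded by Lemma~\ref{lem:symmetry SU} (with $n$ even), which is shared by the characteristic polynomials of the strong generators $\genname_{1/2}$ from Table~\ref{table:generators}. To match $\charpoly{D}$ to the characteristic polynomial of such a generator, say with coefficients $a_1,\ldots,a_k$ (and $a_k\in\F_q$), I would solve $t_i-t_{i-1}=a_i$ recursively to obtain $t_i=1+a_1+\cdots+a_i$, and then read the remaining middle condition as $(-1)^k t_k^{q+1}=a_k+t_{k-1}+t_{k-1}^q\in\F_q$; this is always solvable because the norm map $\F_{q^2}^\times\to\F_q^\times$, $x\mapsto x^{q+1}$, is surjective. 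Theorem~\ref{thm:lowerbound} then forces $\Gal^\Phi(M)\supseteq\SU_{2k}(q)$, completing~(a).

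For part~(b) I would expand $D\cdot\phi_{q^2}(x)=x$ with $x=(x_1,\ldots,x_{2k})\tr$, set $X\coloneqq x_1$, and proceed exactly as in Theorem~\ref{galois_su_odd}: the shift rows $2,\ldots,k-1$ give $x_i=X^{q^{2(i-1)}}$; the last row gives $x_{2k}=(-1)^k X^{q^{2(k-1)}}$; and an inductive backward pass through rows $2k-1,\ldots,k+1$ produces a closed polynomial expression for every $x_{k+j}$ in terms of $X$. Row~$1$ then becomes a single relation involving $X$, the known polynomials in $X$, and the unknown $x_k^{q^2}$.

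The main obstacle is row~$k$, which unlike every other row is not a direct substitution: it reads $x_k-x_k^{q^2}=(-1)^k t_k\,X^{q^{2(k-1)}}$, an Artin--Schreier-style relation that does not on its own determine $x_k$ from $X$. I would eliminate $x_k$ by combining rows~$1$ and~$k$: substituting $x_k^{q^2}=x_k-(-1)^k t_k X^{q^{2(k-1)}}$ into row~$1$ isolates $t_k^q\cdot x_k$ as a known polynomial in $X$; raising this identity to the $q^2$-power and applying the same substitution once more yields a second expression, this time for $t_k^{q^3}\cdot x_k$; multiplying the first by $t_k^{q^3-q}$ and subtracting cancels $x_k$ and, after collecting monomials in $X$, produces precisely the additive polynomial $f(X)$ of the statement. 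The isolated identity $x_k=t_k^{-q}(\cdots)$ also certifies that $X$ determines every coordinate of a solution, so Lemma~\ref{lem:galoisgroup} applies and yields $\Gal_K(f)=\Gal^\Phi(M)=\SU_{2k}(q)$.
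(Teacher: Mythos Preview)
Your proposal is correct and follows essentially the same route as the paper: part~(a) uses Theorem~\ref{thm:twupperbound} for the upper bound and matches $\charpoly{D}$ to the characteristic polynomials of the strong generators via the same recursive specialisation $u_i=1+a_1+\cdots+a_i$ and the $(q+1)$-th-root argument for the middle coefficient; part~(b) carries out the same elimination of $x_k$ from rows~$1$ and~$k$. The only cosmetic difference is that after raising to the $q^2$-th power the paper combines directly with the \emph{original} row~$1$ (both sides carrying $x_k^{q^2}$), whereas you substitute row~$k$ once more to bring everything to $x_k$ before cancelling---the extra terms introduced by your second substitution cancel in the final subtraction, so the resulting polynomial is the same.
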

\begin{proof}
a) By Theorem~\ref{thm:twupperbound}, we have $\Gal^\Phi(M) \leq \SU_n(q)$.
Using the lower bound criterion Theorem~\ref{thm:lowerbound}, it is sufficient to prove that there are specializations $\psi_i: \mathbb{F}_{q^2}[t_1,\dots,t_k] \to \mathbb{F}_{q^2}$, $t_j \mapsto u_{j}, \ i=1,2,$ such that the specialized polynomial $\psi_i(h)$ equals the characteristic polynomial of the $i$-th generator given in Table~\ref{table:generators}.
By Lemma~\ref{lem:symmetry SU}, these characteristic polynomials have the form
$\displaystyle X^n + \sum_{j=1}^{k-1}a_j^q X^{n-j} + a_k X^k + \sum_{j=1}^{k-1} a_jX^j +1$,
where $a_1,\dots a_{k-1} \in \mathbb{F}_{q^2}$, $a_k \in \Fq$.
From~(\ref{eqn:cross section characteristic polynomial for SU_2k}) we infer by successive substitution that a specialization $\psi_i$ would have to satisfy:
\begin{align*}
u_1 = a_1+1, \qquad u_2 &= a_2+a_1+1, \ldots, \qquad u_{k-1} = a_{k-1}+\dots +a_1+1, \\[1ex]
  u_k^{q+1} &= (-1)^k(a_k + u_{k-1} + u_{k-1}^q).
\end{align*}
We thus define $u_i=a_i+\dots+a_1+1 \in \mathbb{F}_{q^2}$ for $1 \leq i \leq k-1$. Then $(-1)^k(a_k + u_{k-1} + u_{k-1}^q)$ is contained in $\Fq$ and therefore has a $(q+1)$-th root $u_k$ in $\mathbb{F}_{q^2}$. This proves the existence of $\psi_i, i=1,2.$ 

\medskip\noindent
b) We need to solve the equation $D \cdot \phi_{q^2}(x)=x$, where $x=(x_1,\dots,x_{2k})$. 
\noindent This leads to $2k$ equations in $x_1,\dots,x_{2k}$.
We let $X=x_1$ and start by solving the second to $(k\minus 1)$-st equation, successively:
\begin{equation}\label{2bisk}
x_i=x_{i-1}^{q^2}=X^{(q^2)^{i-1}}, \ 2\leq i \leq k-1.
\end{equation}
Plugging this for $i=k-1$ into the last equation yields $x_n=(-1)^k x_{k-1}^{q^2}=(-1)^k X^{(q^2)^{k-1}}$.
Now we can consecutively solve the $(2k)$-th through $(k\plus 1)$-th equation:
\begin{eqnarray*}
x_{n-1}&=&(-1)^k t_1 x_{k-1}^{q^2}+x_n^{q^2} \;=\; (-1)^kt_1 X^{(q^2)^{k-1}} + (-1)^k X^{(q^2)^{k}}, \\
&\vdots& \\
x_{n-i}&=&(-1)^k t_i x_k^{(q^2)}+x_{n-i+1}^{(q^2)} \\
&=&(-1)^k \big(t_i X^{(q^2)^{k-1}}+t_{i-1}^{q^2} X^{(q^2)^{k}}+\dots+t_1^{(q^2)^{i-1}}X^{(q^2)^{k+i-2}}+X^{(q^2)^{k+i-1}}\big),\\
&\vdots&  \\ 
x_{k+1}&=&(-1)^k \big(t_{k-1} X^{(q^2)^{k-1}}+t_{k-2}^{q^2}X^{(q^2)^{k}}+\dots+t_1^{(q^2)^{k-2}}X^{(q^2)^{2k-3}}+X^{(q^2)^{2k-2}}\big)
\label{letztesuek}. 
\end{eqnarray*}

\noindent From these equations, we see that $x_2,\dots,x_{k-1},x_{k+1},\dots x_{2k}$ are uniquely determined by $x_1$.
For the sake of convenience, we do not yet replace them in the remaining two equations involving $x_1$ and $x_k$:
\begin{subequations}
\begin{eqnarray}
x_1&=&-\sum \limits_{i=1}^{k-1} t_i^q x_i^{q^2} + (-1)^k x_{k+1}^{q^2} -t_k^q x_k^{q^2}  \label{zeile1su}\\
x_{k}&=&x_k^{q^2}+(-1)^k t_k x_{k-1}^{q^2}. \label{zeileksu}
\end{eqnarray}
\end{subequations}
Our aim is to eliminate $x_k$. We insert (\ref{zeileksu}) into (\ref{zeile1su}) and raise the resulting equation to its $q^2$-th power:
\begin{equation}\label{zeile1su.2}
0=-x_1^{q^2}-\sum \limits_{i=1}^{k-1} t_i^{q^3} x_i^{q^4} + (-1)^k x_{k+1}^{q^4} -t_k^{q^3} x_k^{q^2}+(-1)^k t_k^{q^3+q^2} x_{k-1}^{q^4}.
\end{equation}
Finally, we multiply~(\ref{zeile1su}) by $(-t_k^{q^3-q})$ and add it to (\ref{zeile1su.2}):
\begin{multline*}
 t_k^{q^3-q}x_1-x_1^{q^2}+\sum \limits_{i=1}^{k-1} t_i^q t_k^{q^3-q} x_i^{q^2}-\sum \limits_{i=1}^{k-1} t_i^{q^3} x_i^{q^4}+(-1)^k t_k^{q^3+q^2} x_{k-1}^{q^4} \\
 - (-1)^kt_k^{q^3-q} x_{k+1}^{q^2} + (-1)^k x_{k+1}^{q^4}=0.
\end{multline*}
We substitute $x_2,\dots,x_{k-1},x_{k+1}$ and apply Lemma~\ref{lem:galoisgroup} to obtain the polynomial $f(X)$ from the statement of the theorem.
\end{proof}

\noindent
We remark that using similar methods as in Section~\ref{subsect:2D_k} we can also construct a $2n$-dimensional Frobenius module with Galois group $\SU_n(q)$ over $\mathbb{F}_q(t_1,\dots,t_{2k+1})$ instead of $\mathbb{F}_{q^2}(t_1,\dots,t_{k})$, where $n$ is either $2k\plus1$ or $2k$; see~\cite{Maier} for details.
So far, we haven't been able to deduce series of additive polynomials for this extension.

\subsection{The Twisted Orthogonal Groups $\SO_{2k}^-(q)$}
\label{subsect:2D_k}

\noindent In this section we realize $\SO_{2k}^-(q)$ as a Galois group over $\Fq(t_1,\dots,t_k, s)$ for odd~$q$ by using the linear algebraic group $\mathcal{H} \leq \GL_{4k}$	whose $\Fq$-rational points are $\SO_{2k}^{-}(q)$.

\subsubsection*{Embedding $\SO_n^{-}(q)$ into $\GL_{2n}$}

Throughout this section, let $n=2k \geq 8$ be even and let $q$ be a power of an odd prime. Let further $\G=\SO_n^{+}$ be the special orthogonal group over $\overline{\Fq(t_1,\dots,t_k,s)}$ with respect to 
$J = \left(\begin{smallmatrix} & & 1 \\[-1ex] & \iddots & \\ 1 & & \end{smallmatrix}\right) \in \GL_n$. Define $F \colon \G \to \G$, $A \mapsto N^{-1}\phi_q(A)N$, where 
$N=\idblock{(k-1),\mathbf{2},(k-1)}{\big(\begin{smallmatrix} 0&1\\ 1&0 \end{smallmatrix}\big)}$. 
Then $F^2=\phi_{q^2}$, hence $F$ is a Frobenius map. The corresponding finite group of Lie type $\G^F\leq \G(\mathbb{F}_{q^2})$ is the group $\SO_n^{-}(q)$.

Since it worked quite well in case $\G^F=\SU_n(q)$, one might hope to realize $\G^F$ over $\mathbb{F}_{q^2}(\underline t)$ using an $n$-dimensional Frobenius module with representing matrix of the form $\zwmat\cdot F(\zwmat)$, $\zwmat \in \G(\mathbb{F}_{q^2}(\underline t))$. Therefore, we have a look at the action of $F$ on the root subgroups.  The Frobenius map $F$ is the composition of $\phi_q$ with the map which permutes $k$-th and $(k+1)$-th rows and columns of a matrix $A \in \G$. Therefore, $F$ fixes the root subgroups $X_1,\dots X_{k-2}$ and permutes $X_{k-1}$ and $X_k$. Following the approach used for the special unitary groups, we would therefore define $\zwmat=E_1(t_1)w_1^{-}\dots E_{k-1}(t_{k-1})w_{k-1}^{-}$.
We then compute $\zwmat\cdot F(\zwmat)=E_1(t_1)w^{-}_1 \cdot\ldots\cdot  E_{k-1}(t_{k-1})w^{-}_{k-1} \cdot E_1(t_1^q)w^{-}_1\cdot \ldots\cdot E_{k-2}(t_{k-2}^q) w^{-}_{k-2} \cdot E_{k}(t_{k-1}^q)w^{-}_k$.
The terms $X_iw_i$ for $1 \leq i \leq k-2$ occur twice in the product $\zwmat\cdot F(\zwmat)$ which gives the matrix a rather complicated shape. Therefore, we work with another approach using a linear algebraic group $\mathcal H \leq \GL_{2n}$ such that $\mathcal H(\Fq) \cong \SO_n^{-}(q)$.

\bigskip\noindent
Since $q$ is odd, we can fix an element $x$ that generates the extension $\mathbb{F}_{q^2}/ \Fq$ and satisfies $x^q=-x$. Then $\tau: \mathbb{F}_{q^2}(\underline t) \to \mathbb{F}_{q^2}(\underline t)$, $\ a(\underline t)+xb(\underline t) \mapsto a(\underline t)-xb(\underline t)$ extends $\phi_q: \mathbb{F}_{q^2} \to \mathbb{F}_{q^2}.$ We can now define a group extending $\SO_n^{-}(q)$:
$$\SO_n^-(\mathbb{F}_{q^2}(\underline t))=\left\{ A \in \G(\mathbb{F}_{q^2}(\underline t)) \ | \ N^{-1} \cdot \tau(A) \cdot N = A \right\}.$$
Embed $\GL_n(\mathbb{F}_{q^2}(\underline t))$ into $\GL_{2n}(\Fq(\underline t))$ by extending the natural map
$$\phi: \mathbb{F}_{q^2}(\underline t) \to M_{2\times2}(\mathbb{F}_{q}(\underline t)),\ a(\underline{t})+xb(\underline t) \mapsto 
\left(\! \! \! \! \begin{array}{cc} 
	a(\underline t) & x^2b(\underline t)\\ 
	b(\underline t) & a(\underline t) \\
\end{array} \! \! \! \! \right).$$
The image $\tilde{\mathcal{H}}$ is a linear algebraic group over $\mathbb{F}_{q}$. The action of $\tau$ on $\mathbb{F}_{q^2}(\underline t)$ translates to
$$\tilde \tau: \tilde{\mathcal{H}}(\mathbb{F}_{q}(\underline t)) \to \tilde{\mathcal{H}}(\mathbb{F}_{q}(\underline t)), \
\left(\! \! \! \! \begin{array}{cc} 
	a_{ij}(\underline t) & x^2b_{ij}(\underline t)\\
	b_{ij}(\underline t) & a_{ij}(\underline t) \\
\end{array} \! \! \! \! \right)_{i,j \leq n} \mapsto
\left(\! \! \! \! \begin{array}{cc}
	a_{ij}(\underline t) &- x^2b_{ij}(\underline t)\\
	-b_{ij}(\underline t) & a_{ij}(\underline t) \\
\end{array} \! \! \! \! \right)_{i,j \leq n}.$$

\noindent Let $A \in \GL_n(\mathbb{F}_{q^2}(\underline t))$. Then $N^{-1} \cdot \tau(A) \cdot N=A$ is equivalent to 
$\phi(N)^{-1}\cdot \tilde \tau (\phi(A)) \cdot \phi(N)=(\phi(A)).$ Since the determinant is given polynomially, there are polynomials $f_1, f_2 \in \Fq[X_{ij}^{11}, X_{ij}^{21}]$ such that 
$\det((X_{ij}^{11}+ x\cdot X_{ij}^{21})_{ij})=f_1(X_{ij}^{11}, X_{ij}^{21}) + x\cdot f_2 (X_{ij}^{11}, X_{ij}^{21}).$ 
We have now collected all necessary conditions to define $\mathcal H$:
\small $$\mathcal H = \left\{B \in \tilde{\mathcal H}(\overline{\Fq(\underline t)}) \ | \ f_1(B)=1, f_2(B)=0, \ \tilde B\phi(J)B=\phi(J), \ \phi(N)^{-1} \tilde \tau (B)  \phi(N)=B \right\},$$ 
 
 \normalsize
\noindent where $\tilde B$ denotes the matrix obtained by transposing its $(2 \times 2)$-blocks. It is easy to see that $\mathcal{H}$ is a linear algebraic group defined over $\F_q$ and by construction, $\SO_n^-(q) \cong \mathcal{H}(\mathbb{F}_{q}) , \ \SO_n^-(\mathbb{F}_{q^2}(\underline t)) \cong \mathcal{H}(\mathbb{F}_{q}(\underline t)),$ via $\phi$. Moreover, there is a natural isomorphism
$\mathcal{H} \cong \left\{(A_1,A_2) \in \SO_n^{+} \times \SO_n^{+} \ | \ A_1=N^{-1}A_2N  \right\}\cong \SO_n^{+},$ over $\overline{\Fq(\underline t)}$ hence $\mathcal{H}$ is connected and we can apply Theorem~\ref{thm:upperbound}.

\subsubsection*{A Frobenius module with Galois group $\SO_n^-(q)$}
We would like to construct a $2n$-dimensional Frobenius module over $(\Fq(\underline t),\phi_q)$ with representing matrix $\tilde D=\phi(D)$, for a $D \in \SO_n^{-}(\mathbb{F}_{q^2}(\underline t))$ such that $\Gal^\Phi(M)\cong \mathcal H(\Fq)$.
Since $\SO_n^{+}(\Fq(\underline t)) \subseteq \SO_n^{-}(\mathbb{F}_{q^2}(\underline t))$, we start from the generalized Steinberg cross section $\pcs_k(t_1,\dots,t_k,s)$ as defined in section~\ref{sec:Dk}.  

Conjugating a matrix by $N$ permutes the two middle rows and columns. Therefore, $\SO_n^{-}(\mathbb{F}_{q^2}(\underline t))=\left\{ A \in \G(\mathbb{F}_{q^2}(\underline t)) \ | \ N^{-1} \cdot \tau(A) \cdot N = A \right\}$ consists of all special orthogonal matrices of the following form. 
$$\footnotesize \left(\! \!  \begin{array}{ccc|ccc} 
A_1&&a_1&\tau(a_1)&&A_2 \\
&&\vdots&\vdots& \\
&&a_{k-1}&\tau(a_{k-1})&& \\
b_1&\dots&b_{k}&c_{k}&\dots&c_1 \\ \hline
\tau(b_1)&\dots&\tau(c_{k})&\tau(b_{k})&\dots&\tau(c_1) \\
&&d_1&\tau(d_1)&&\\
&&\vdots&\vdots& \\
A_3&&d_{k-1}&\tau(d_{k-1})&&A_4 \\
\end{array} \! \!   \right),$$ \normalsize where $A_1,A_2,A_3,A_4$ denote $(n-1) \times (n-1)$-matrices with entries in $\Fq(\underline t)$ and $a_i,b_i,c_i,d_i$ denote elements in $\mathbb{F}_{q^2}(\underline t)$.
Therefore, the natural way to define $D$ is $D=\pcs_k(t_1,\dots,t_{k-2},t_{k-1}+xt_{k}, -t_{k-1}+xt_k, s)$, so $D$ is a special orthogonal matrix and the elements  are chosen in a way such that $D \in \SO_n^{-}(\mathbb{F}_{q^2}(\underline t))$, thus $\phi(D) \in \mathcal H(\Fq(\underline t))$.
Note that $\underline t$ is now understood to be $(t_1,\dots,t_k,s)$.

\begin{thm}
Let $q$ be odd and let $(M,\Phi)$ be the $2n$-dimensional Frobenius module over $K=\big(\Fq(\underline t), \phi_q\big) = \big(\Fq(t_1,\dots,t_k,s), \phi_q\big)$ such that the representing matrix of $\Phi$ is $\vphantom{\Big(}\tilde D=\phi(D)$ with respect to some basis of $M$.
Then
\begin{enumerate}
  \item $\Gal^\Phi(M)\cong \mathcal{H}(\Fq) \cong \SO_n^-(q)$.
  \item Define an additive polynomial $f \in K[X]$ as follows, where we abbreviate $\alpha=(t_{k-1}^{q-1}+t_k^{q-1})^{q-1}$.
    If $k\geq 5$,
    \begin{eqnarray*}
      f(X)&=&s^{q^2-1}(t_{k-1}t_k)^{q-1}\alpha X+(-s^{q^2-1}(t_{k-1}t_k)^{q-1}\alpha t_1+s^{q^2-q}t_{k-1}^{q-1}\alpha - s^{q^2-q}t_k^{q^2-q})X^q \\
      &+&(-s^{q^2}(t_{k-1}t_k)^{q-1}\alpha t_2-s^{q^2-q}t_{k-1}^{q-1} \alpha t_1^q +s^{q^2-q}t_k^{q^2-q}t_1^q-1)X^{q^2} \\
      &+&(-s^{q^2}(t_{k-1}t_k)^{q-1}\alpha t_3-s^{q^2}t_{k-1}^{q-1} \alpha t_{2}^q +s^{q^2}t_k^{q^2-q}t_{2}^q+t_{1}^{q^2})X^{q^3} \\
      &+&\sum\limits_{i=4}^{k-2}(-s^{q^2}(t_{k-1}t_k)^{q-1}\alpha t_i-s^{q^2}t_{k-1}^{q-1} \alpha t_{i-1}^q +s^{q^2}t_k^{q^2-q}t_{i-1}^q+s^{q^2}t_{i-2}^{q^2})X^{q^i} \\
      &+&(-s^{q^2}(t_{k-1}^2-x^2t_k^2)(t_{k-1}t_k)^{q-1}\alpha -s^{q^2}t_{k-1}^{q-1}\alpha t_{k-2}^q+s^{q^2}t_k^{q^2-q}t_{k-2}^q+s^{q^2}t_{k-3}^{q^2})X^{q^{k-1}} \\
      &+&(s^{q^2}(t_{k-1}^2-x^2t_k^2)^qt_{k-1}^{q-1}\alpha +s^{q^2}(-t_{k-1}^2-x^2t_k^2)^qt_k^{q^2-q}+s^{q^2}t_{k-2}^{q^2}-2s^{q^2}t_{k-1}^{q^2+q} \\
      &&+s^{q^2}(t_{k-1}t_k)^{q-1}\alpha t_{k-2}^q) X^{q^{k}} \\
      &+&((t_{k-1}t_k)^{q-1}\alpha t_{k-3}^{q^2} 
      -(t_{k-1}^2-x^2t_k^2)^{q^2}+t_{k-2}^{q^2}(t_{k-1}^{q-1}\alpha -t_k^{q^2-q}))s^{q^2}X^{q^{k+1}} \\
      &+&\sum\limits_{i=2}^{k-4}((t_{k-1}t_k)^{q-1}\alpha t_i^{q^{k-1-i}}+(t_{k-1}^{q-1}\alpha -t_k^{q^2-q})t_{i+1}^{q^{k-1-i}}-t_{i+2}^{q^{k-1-i}})s^{q^2}X^{q^{2k-2-i}} \\
      &+&(\frac{1}{s^{q^{k-2}}}t_1^{q^{k-2}}(t_kt_{k-1})^{q-1}\alpha+(t_{k-1}^{q-1}\alpha -t_k^{q^2-q})t_2^{q^{k-2}}-t_3^{q^{k-2}})s^{q^2}X^{q^{2k-3}} \\
      &+&(-\frac{1}{s^{q^{k-1}}}(t_{k-1}t_k)^{q-1}\alpha+\frac{1}{s^{q^{k-1}}}t_1^{q^{k-1}}(t_{k-1}^{q-1}\alpha 
      -t_k^{q^2-q})-t_2^{q^{k-1}})s^{q^2}X^{q^{2k-2}} \\
      &+&(\frac{1}{s^{q^k}}(-t_{k-1}^{q-1}\alpha+t_k^{q^2-q})-\frac{1}{s^{q^k}}t_1^{q^k})s^{q^2}X^{q^{2k-1}} + \frac{s^{q^2}}{s^{q^{k+1}}}X^{q^{2k}}.
    \end{eqnarray*}
    In case $k=4$, let
    \begin{eqnarray*}
      f(X)&=&s^{q^2-1}(t_{3}t_4)^{q-1}\alpha X+(-s^{q^2-1}(t_{3}t_4)^{q-1}\alpha t_1+s^{q^2-q}t_{3}^{q-1}\alpha - s^{q^2-q}t_4^{q^2-q})X^q \\
      &+&(-s^{q^2}(t_{3}t_4)^{q-1}\alpha t_2-s^{q^2-q}t_{3}^{q-1} \alpha t_1^q +s^{q^2-q}t_4^{q^2-q}t_1^q-1)X^{q^2} \\
      &+&(-s^{q^2}(t_{3}^2-x^2t_4^2)(t_{3}t_4)^{q-1}\alpha -s^{q^2}t_{3}^{q-1}\alpha t_{2}^q+s^{q^2}t_4^{q^2-q}t_{2}^q+t_{1}^{q^2})X^{q^{3}} \\
      &+&(s^{q^2}(t_{3}^2-x^2t_4^2)^qt_{3}^{q-1}\alpha +s^{q^2}(-t_{3}^2-x^2t_4^2)^qt_4^{q^2-q}+s^{q^2}t_{2}^{q^2}-2s^{q^2}t_{3}^{q^2+q} \\
      &&+s^{q^2}(t_{3}t_4)^{q-1}\alpha t_{2}^q) X^{q^{4}} \\
      &+&(\frac{1}{s^{q^2}}(t_{3}t_4)^{q-1}\alpha t_{1}^{q^2} 
      -(t_{3}^2-x^2t_4^2)^{q^2}+t_{2}^{q^2}(t_{3}^{q-1}\alpha -t_4^{q^2-q}))s^{q^2}X^{q^{5}} \\
      &+&(-\frac{1}{s^{q^{3}}}(t_{3}t_4)^{q-1}\alpha+\frac{1}{s^{q^{3}}}t_1^{q^{3}}(t_{3}^{q-1}\alpha 
      -t_4^{q^2-q})-t_2^{q^{3}})s^{q^2}X^{q^{6}} \\
      &+&(\frac{1}{s^{q^4}}(-t_{3}^{q-1}\alpha+t_4^{q^2-q})-\frac{1}{s^{q^4}}t_1^{q^4})s^{q^2}X^{q^{7}} + \frac{s^{q^2}}{s^{q^{5}}}X^{q^{8}}.
    \end{eqnarray*}
    Then the solution field of $M$ is generated by the roots of $f$.
    In particular, $\Gal_{\Fq(t_1,\dots,t_k,s)}(f) \cong \SO_n^{-}(q)$.
\end{enumerate}
\end{thm}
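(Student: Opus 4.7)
The substitution $(t_{k-1},t_k)\mapsto(t_{k-1}+x\,t_k,\,-t_{k-1}+x\,t_k)$ defining $D$ from the $\SO^+_{2k}$ cross section is engineered so that $\tau$ has the same effect on $D$ as conjugation by $N$ (which swaps rows/columns $k$ and $k+1$), forcing $N^{-1}\tau(D)N=D$. Thus $D\in\SO_n^-(\mathbb{F}_{q^2}(\underline t))$ and $\tilde D\in\mathcal H(\Fq(\underline t))$. Since $\mathcal H$ is defined over $\Fq$ and connected (being isomorphic to $\SO_n^+$ over $\overline{\Fq(\underline t)}$), Theorem~\ref{thm:upperbound} yields $\Gal^\Phi(M)\le\mathcal H(\Fq)\cong\SO_n^-(q)$.

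\medskip
\noindent\textbf{Lower bound.} The characteristic polynomial of $D$ over $\mathbb{F}_{q^2}(\underline t)$ takes the symmetric form~\eqref{eqn:characteristic polynomial of even-dimensional special orthogonal cross section} with the substituted entries, and by Lemma~\ref{lem:symmetric coefficients} so do the characteristic polynomials of the two strong generators of $\SO_n^-(q)$ from Table~\ref{table:generators}. Matching coefficients reduces, as in Section~\ref{sec:Dk}, to the biquadratic equation~\eqref{eqn:even orthogonal biquadratic equation for t_k} in $t_k$; the discriminant computed in~\eqref{eqn:lalelu} is a square in $\Fq$, hence automatically a square in $\mathbb{F}_{q^2}$ (since for odd $q$ every element of $\Fq^\times$ is a square in $\mathbb{F}_{q^2}^\times$), so the obstruction that restricted the choice of $s$ in the $\SO_n^+$-argument disappears and suitable specialisations $\psi_i(\underline t)\in\mathbb{F}_{q^2}$ exist for \emph{both} generators. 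Regular semisimplicity of each generator now gives $\mathcal H(\overline{\Fq})$-conjugacy of $\psi_i(\tilde D)$ to the $\phi$-image of the $i$-th generator, and Theorem~\ref{thm:lowerbound}(ii) produces the reverse inclusion.

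\medskip
\noindent\textbf{Solving the system.} Over $\mathbb{F}_{q^2}(\underline t)$ the $2\times2$-blocks $\bigl(\begin{smallmatrix} a & x^2 b \\ b & a \end{smallmatrix}\bigr)$ produced by $\phi$ simultaneously diagonalise to $\diag(a+xb,\,a-xb)$, so $M\otimes_{\Fq(\underline t)}\mathbb{F}_{q^2}(\underline t)$ splits as $M_D\oplus M_{\tau(D)}$. The relation $\tau(D)=NDN^{-1}$ makes these summands isomorphic Frobenius modules, and therefore the solution field of $M$ coincides, after adjoining $\mathbb{F}_{q^2}$, with that of the $n$-dimensional module $M_D$ over $(\mathbb{F}_{q^2}(\underline t),\phi_q)$. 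One now solves $D\cdot\phi_q(y)=y$ verbatim as in the proof of the $\SO^+_{2k}(q)$-result from Section~\ref{sec:Dk}: the companion-block structure of $D$ forces $y_i=y_1^{q^{i-1}}$ for $2\le i\le k-1$ together with closed forms for $y_{k+2},\ldots,y_n$; the remaining pair $(y_k,y_{k+1})$ is eliminated via the $2\times2$-linear system obtained by raising one of the two middle equations to the $q$-th power, yielding a single additive polynomial $f$ of degree $q^{2k}$ in $y_1$. Lemma~\ref{lem:galoisgroup} applied to $M_D$ then identifies $\Gal_{\mathbb{F}_{q^2}(\underline t)}(f)$ with $\Gal^\Phi(M)$, and since $f\in\Fq(\underline t)[X]$ the Galois group over $\Fq(\underline t)$ coincides. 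The main obstacle is pure bookkeeping: after the substitution the quantity $\delta$ from Section~\ref{sec:Dk} simplifies so that $\delta^{q-1}$ equals the abbreviation $\alpha=(t_{k-1}^{q-1}+t_k^{q-1})^{q-1}$ that appears throughout $f$, and the $k=4$ case has to be recorded separately because one of the summation ranges of the general formula becomes empty.
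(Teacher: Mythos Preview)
Your upper bound is fine, but both the lower bound and the derivation of $f$ contain genuine gaps.

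\textbf{Lower bound.} The Frobenius module $M$ lives over $K=\Fq(\underline t)$, so the specialisations feeding into Theorem~\ref{thm:lowerbound} must send $t_1,\dots,t_k,s$ to elements of $\Fq$; otherwise the residue field is larger than $\Fq$ and the conclusion only concerns the product $D_{\mathcal P}\cdot\phi_q(D_{\mathcal P})$, not $D_{\mathcal P}$ itself. You therefore cannot appeal to ``everything is a square in $\mathbb F_{q^2}$''. Moreover, after the substitution $(t_{k-1},t_k)\mapsto(t_{k-1}+xt_k,-t_{k-1}+xt_k)$ the biquadratic of Section~\ref{sec:Dk} does not persist: comparing coefficients now gives the \emph{pair} of conditions
\[
4s\,t_{k-1}^2=(-1)^{k-1}\charpolyorth(-1),\qquad 4s\,t_k^2=-\charpolyorth(1)/x^2,
\]
and one needs a \emph{single} $s\in\Fq^\times$ making both right-hand sides squares in $\Fq$. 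The paper proves this by showing that $z:=x\prod_i(\alpha_i^{-1}-\alpha_i)$ lies in $\Fq$, using $x^q=-x$ together with the specific relation $\alpha^{q^k}=\alpha^{-1}$ (resp.\ its analogue) satisfied by the eigenvalues of the $\SO_n^-$ generators. This is precisely where the twisted case differs from $\SO_n^+$, and it is absent from your argument.

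\textbf{Derivation of $f$.} The claimed splitting $M\otimes_{\Fq(\underline t)}\mathbb F_{q^2}(\underline t)\cong M_D\oplus M_{\tau(D)}$ as \emph{Frobenius} modules is false. The change-of-basis matrix $C$ that simultaneously diagonalises the $2\times2$ blocks of $\phi$ has entries involving $x$, and since $\phi_q(x)=-x$ one gets $\phi_q(C)\neq C$; the new representing matrix is $C^{-1}\tilde D\,\phi_q(C)$, which is not block-diagonal. Consequently you cannot reduce to solving $D\cdot\phi_q(y)=y$ over $\mathbb F_{q^2}(\underline t)$ and then descend. The paper works directly with the $2n$-dimensional system $\tilde D\cdot\phi_q(x)=x$ over $\Fq(\underline t)$: because all entries of $D$ outside the two middle columns lie in $\Fq(\underline t)$, the equations for the $x_i$'s and the $y_i$'s run in parallel, and the elimination (which uses combinations like $x_k\pm x_{k+1}$ and $y_k\pm y_{k+1}$ rather than the $\SO_n^+$ $2\times2$ system) produces a single $f\in\Fq(\underline t)[X]$ of degree $q^{2k}$ with $f(x_1)=f(y_1)=0$. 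A counting argument --- $q^{4k}$ solution vectors, each determined by the pair $(x_1,y_1)$, versus $(q^{2k})^2$ pairs of roots of $f$ --- then identifies the solution field of $M$ with the splitting field of $f$. Your claim that the abbreviation $\alpha$ is $\delta^{q-1}$ from Section~\ref{sec:Dk} after substitution is also not how $\alpha$ arises; it appears naturally in this parallel elimination, not via the $\SO_n^+$ formulas.
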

\begin{proof}
a) Theorem~\ref{thm:upperbound} asserts that $\Gal^\Phi(M)\leq \mathcal H(\Fq) \cong \SO_n^{-}(q)$.
For the lower bound, we prove that there are specializations $\psi: \Fq[t_1,\dots,t_k,s] \to \Fq$, $t_i \mapsto a_i \in \Fq$, $s \mapsto a_s \in \Fq^{\times}$ such that $\psi(D)$ is $\GL_n(\Fqbar)$-conjugate to the generators ${\genname}_1$ and ${\genname}_2$ given in Table~\ref{table:generators}.
Since the characteristic polynomials of ${\genname}_1$ and ${\genname}_2$ are separable, it suffices to show that there are specializations such that the characteristic polynomial $h$ specializes to the characteristic polynomials of ${\genname}_1$ and ${\genname}_2$. 
By Lemma~\ref{lem:symmetric coefficients}, the latter have the form
$\displaystyle \charpolyorth(X)=X^n+\sum \limits _{i=1}^{k-1}a_iX^{n-i}+a_kX^k+\sum \limits _{i=1}^{k-1}a_iX^{i}+1$ ($a_1,\ldots,a_k \in \Fq$).
The characteristic polynomial of $\pcs_k(t_1,\dots,t_k,s)$ was already computed in~\ref{sec:Dk}.
It follows that the characteristic polynomial of $D$ is
\begin{eqnarray*}
	h(X)&=&X^n-t_1X^{n-1}-s(t_2+1)X^{n-2}+(-s t_3+t_1)X^{n-3} \\
	&&+\sum_{i=4}^{k-2}(-s t_i+s t_{i-2})X^{n-i}+(s(t_{k-1}^2-x^2t_k^2)+s t_{k-3})X^{k+1} \\
	&&+s(2t_{k-2}-(t_{k-1}+xt_k)^2-(t_{k-1}-xt_k)^2)X^k \\
	&&+(s(t_{k-1}^2-x^2t_k^2)+s t_{k-3})X^{k-1} \\
	&&+ \sum_{i=4}^{k-2}(-s t_i+s t_{i-2})X^{i}+(-s t_3+t_1)X^{3}-s(t_2+1)X^{2}-t_1X+1
.\end{eqnarray*}
Therefore, we specialize 
\begin{align*}
t_1 &\mapsto -a_1, \qquad
t_2  \mapsto \frac{1}{s}(-a_2-1), \\
t_3 &\mapsto \frac{1}{s}(-a_3+t_1)=\frac{1}{s}(-a_3-a_1), \\
t_i &\mapsto \frac{1}{s}(-a_i-a_{i-2}-a_{i-4}-\dots), \quad 3\leq i \leq k-2, \\
 s(t_{k-1}^2-x^2t_k^2) &\mapsto a_{k-1}-s t_{k-3}, \\
2s(t_{k-1}^2+x^2t_k^2) &\mapsto 2s t_{k-2}-a_{k}.
\end{align*}
We have to show that the last two equations are solvable for $t_{k-1},t_k \in \Fq$ and $s \in \Fq^{\times}$, where $t_{k-2}$ and $t_{k-3}$ are given by the preceding equation.
We multiply the last but one equation by 2 and -2, respectively, and add it to the last equation.
Plugging in the definition of $t_{k-2}, t_{k-3}$ yields
\begin{subequations}
\begin{align}
4 s t_{k-1}^2 &\;=\; 2\sum_{i=1}^{k-2}(-1)^{k-1+i}a_i+2(-1)^{k-1} +2a_{k-1}-a_k=(-1)^{k-1}\charpolyorth(-1), \label{00020} \\
4 s t_k^2     &\;=\; \frac{1}{x^2}(-2 \sum_{i=1}^{k-2} a_i \;- 2 -2a_{k-1}-a_k)=\frac{-\charpolyorth(1)}{x^2}. \label{00021}
\end{align} 
\end{subequations}
We need to show that we can choose $s \in \Fq^{\times}$ such that when we divide the right hand sides of equations (\ref{00020}) and (\ref{00021}) by $s$ they both become squares in~$\Fq$.
This means that we have to show that $(-1)^{k-1}\charpolyorth(-1)$ and $-\charpolyorth(1)/x^2$ are both either squares or non-squares in $\Fq$.
Thus it suffices to show that $(-1)^{k}\charpolyorth(-1) \charpolyorth(1)\,x^2$ is a square in $\Fq$.
Recall that $x^2$ is a non-square in $\Fq$ because $x^q=-x$.
Since $\charpolyorth$ is of the palindromic form 
$\charpolyorth(X)=\prod \limits _{i=1}^{k}(X-\alpha_i)(X-\alpha_i^{-1})$, we have
$$\charpolyorth(1)\charpolyorth(-1)=\prod \limits_{i=1}^{k} \frac{(1-\alpha_i)^2(-1)^k(\alpha_i+1)^2}{\alpha_i^2}.$$
Therefore, $(-1)^{k}\charpolyorth(-1) \charpolyorth(1)\,x^2$ is a square in $\Fq$ if and only if the element
$$z \coloneqq x\cdot\prod \limits_{i=1}^{k} \frac{(1-\alpha_i)(\alpha_i+1)}{\alpha_i} = x \cdot\prod \limits_{i=1}^{k} (\frac{1}{\alpha_i}-\alpha_i)$$ lies in~$\Fq$.
We must check this for both generators ${\genname}_1$ and ${\genname}_2$.
In the case of ${\genname}_1$, the roots $\alpha_1,\dots\alpha_k$ are equal to $\alpha,\alpha^q,\dots,\alpha^{q^{k-1}}$, where $\alpha^{q^k}=\alpha^{-1}$.
We calculate
\begin{eqnarray*}
z^q &=&  -x\cdot \big(\frac{1}{\alpha^{q^k}}-\alpha^{q^k}\big)
           \cdot \prod_{i=1}^{k-1} \big(\frac{1}{\alpha^{q^i}}-\alpha^{q^i}\big)
\;\;=\;\;-x\cdot \big(\alpha-\frac{1}{\alpha}\big)
           \cdot \prod_{i=1}^{k-1} \big(\frac{1}{\alpha^{q^i}}-\alpha^{q^i}\big) \\
&=&\phantom{-}x\cdot\prod \limits_{i=0}^{k-1} \big(\frac{1}{\alpha^{q^i}}-\alpha^{q^i}\big)
 \;\;=\;\;z.
\end{eqnarray*}
For ${\genname}_2$, the computation is quite similar, hence $z\in\Fq$ in both cases.
We conclude $\psi_i(D)={\genname}_i$ so that $\Gal^{\Phi}(M)$ contains $\GL_{2n}(\Fqbar)$-conjugates to $\phi({\genname}_i)$.
More precisely, $\Gal^{\Phi}(M)$ contains $\mathcal{H}(\Fqbar)$-conjugates to $\phi({\genname}_i)$, say $\phi({\genname}_i)^{u_i}$, by the second part of Theorem~\ref{thm:lowerbound}.
We extend $\phi: \SO_n^{-}(q) \to \mathcal H (\Fq)$ to a morphism of linear algebraic groups 
$\phi: \GL_n(\Fqbar) \to \GL_{2n}(\Fqbar)$,
$$(a_{ij}) \mapsto 
\left(
\begin{pmatrix}
\frac{a_{ij}+a_{ij}^q}{2} & x \frac{a_{ij}-a_{ij}^q}{2}  \\
 \frac{a_{ij}-a_{ij}^q}{2x} & \frac{a_{ij}+a_{ij}^q}{2}\\
\end{pmatrix}_{ij} \right).$$
Now we extend $\phi^{-1}: \mathcal H (\Fq) \to \SU_n(q)$ to a morphism of linear algebraic groups 
$\psi: \mathcal H (\Fqbar) \to \GL_n(\Fqbar)$, 
$$
\left(
\begin{pmatrix}
a_{ij} & x^2b_{ij}  \\
b_{ij} & a_{ij}\\
\end{pmatrix}_{ij} \right) \mapsto (a_{ij}+xb_{ij})_{ij}.
$$ Define $\gentilde_i=\psi(\phi({\genname}_i)^{u_i})$. Now $\psi$ is defined on all of $\mathcal{H}(\Fqbar)$ thus $\gentilde_i={\genname}_i^{\psi_i(u_i)}$ is $\GL_n(\Fqbar)$-conjugate to ${\genname}_i$. Since ${\genname}_1$ and ${\genname}_2$ form a strong pair of generators, $\gentilde_1$ and $\gentilde_2$ generate $\SO_n^{-}(q)$ and thus $\phi({\genname}_1)^{u_1}=\phi(\gentilde_1)$ and $\phi({\genname}_2)^{u_2}=\phi(\gentilde_2)$ generate $\mathcal{H}(\mathbb{F}_q)$. 

\bigskip\noindent
b) We compute the solutions to $\phi(D)\cdot \phi_q(x)=x$ with $\phi(D)$ consisting of $n^2$ $(2\times 2)$-matrices.
For $2 \leq i \leq k-1$, the only nonzero entry in the $i$-th row of $D$ is in the $(i+1)$-th column and equals~$1$.
Therefore, $\phi(D)$ contains the identity matrix $I_2$ at this position and zeros elsewhere in these rows.
The rows and columns of $\phi(D)$ are understood to be rows and columns of $2 \times 2$-matrices.
Let $x=(x_1,y_1,\dots,x_n,y_n)^{tr} \in \overline{\Fq(\underline t, s)}^{2n}$ be a solution to $\phi(D)\cdot \phi_q(x)=x$, where $n=2k$.
Then 
\begin{eqnarray*}
 x_i&=&x_{i-1}^q=x_1^{q^{i-1}}, \ 2\leq i \leq {k-1}, \label{00040} \\
 y_i&=&y_{i-1}^q=y_1^{q^{i-1}}, \ 2\leq i \leq {k-1}.
\end{eqnarray*}

\noindent Recall that the entries 
$-\frac{1}{s}, \frac{1}{s}t_1,t_2,\dots,t_{k-2}$ of $D$ all lie in $\Fq(\underline{t})$ and therefore correspond to the entries $-\frac{1}{s}I_2, \frac{1}{s}t_1I_2,t_2I_2,\dots,t_{k-2}I_2$ in $\phi(D)$.
We can now evaluate the $2k$-th through $(k+2)$-nd row of $\phi(D)$, successively.
\begin{alignat}{2}
x_{2k}&= -\frac{1}{s}x_1^{q^{k-1}}, \qquad \qquad&
y_{2k}&= -\frac{1}{s}y_1^{q^{k-1}}, \nonumber \\
x_{2k-1}&= \frac{1}{s}t_1x_1^{q^{k-1}}-\frac{1}{s^q}x_1^{q^k}, \qquad \qquad&
y_{2k-1}&= \frac{1}{s}t_1y_1^{q^{k-1}}-\frac{1}{s^q}y_1^{q^k}, \nonumber \\
&\;\vdots &&\;\vdots \nonumber \\
x_{k+2}&= t_{k-2}x_1^{q^{k-1}}+t_{k-3}^qx_1^{q^k}+\dots+t_2^{q^{k-4}}&x_1^{q^{2k-5}}&+\frac{1}{s^{q^{k-3}}}t_1^{q^{k-3}}x_1^{q^{2k-4}}-\frac{1}{s^{q^{k-2}}}x_1^{q^{2k-3}}, \nonumber \\
y_{k+2}&= t_{k-2}y_1^{q^{k-1}}+t_{k-3}^qy_1^{q^k}+\dots+t_2^{q^{k-4}}&y_1^{q^{2k-5}}&+\frac{1}{s^{q^{k-3}}}t_1^{q^{k-3}}y_1^{q^{2k-4}}-\frac{1}{s^{q^{k-2}}}y_1^{q^{2k-3}}. \label{00052} 
\end{alignat}
Note that $x_2,y_2,\dots,x_{k-1},y_{k-1},x_{k+2},y_{k+2}\dots,x_n,y_n$ are uniquely and polynomially determined by $x_1$ and $y_1$.
What is left is to evaluate rows $k+1, k, 1$.
The entries in $\phi(D)$ corresponding to $s(t_{k-1}^2-x^2t_k^2),s(t_{k-1}-xt_k),s(t_{k-1}+xt_k)$ are 
$$
\begin{pmatrix}
s(t_{k-1}^2-x^2t_k^2)&0 \\
0&s(t_{k-1}^2-x^2t_k^2)
\end{pmatrix},
\begin{pmatrix}
s t_{k-1}&-x^2s t_k \\
-s t_k&s t_{k-1}
\end{pmatrix},
\begin{pmatrix}
s t_{k-1}&x^2s t_k \\
s t_k&s t_{k-1}
\end{pmatrix}.$$
Therefore,
\begin{subequations}
\begin{eqnarray}
x_{k+1}&=&t_{k-1}x_{k-1}^q+x^2t_ky_{k-1}^q+x_k^q, \label{00043}\\
y_{k+1}&=&t_{k}x_{k-1}^q+t_{k-1}y_{k-1}^q+y_k^q, \label{00044}\\
x_k&=&t_{k-1}x_{k-1}^q-x^2t_ky_{k-1}^q+x_{k+1}^q, \label{00045}\\
y_k&=&-t_{k}x_{k-1}^q+t_{k-1}y_{k-1}^q+y_{k+1}^q, \label{00046}\\
x_1&=&t_1x_1^q+s t_2 x_1^{q^2}+\dots+s t_{k-2}x_1^{q^{k-2}}+s(t_{k-1}^2-x^2t_k^2)x_1^{q^{k-1}}, \nonumber \\
&&+ s t_{k-1}(x_k^q+x_{k+1}^q)+x^2s t_k(-y_k^q+y_{k+1}^q)-s x_{k+2}^q, \label{00041}\\
y_1&=&t_1y_1^q+s t_2 y_1^{q^2}+\dots+s t_{k-2}y_1^{q^{k-2}}+s(t_{k-1}^2-x^2t_k^2)y_1^{q^{k-1}}, \nonumber \\
&&+s t_{k}(-x_k^q+x_{k+1}^q)+s t_{k-1}(y_k^q+y_{k+1}^q)-s y_{k+2}^q. \label{00042}
\end{eqnarray} 
\end{subequations}
For the sake of convenience, we don't replace $x_{k-1},y_{k-1},x_{k+2}$ and $y_{k+2}$, yet.
Note that the variables $x_i$ and $y_i$ are separated in equation (\ref{00041}) and (\ref{00042}), apart from the occurrence of $x_k,x_{k+1},y_k,y_{k+1}$.
These will be eliminated below.
We will get one polynomial in $x_1$ and one in $y_1$.
This is possible because all of the elements in the first row of $D$, except those in the two middle columns, lie in $\Fq(\underline t)$, since $F$ fixes all but the last two simple roots.
Note that the fact that $F$ fixes all but two simple roots makes it hard to realize $\SO_n^{-}(q)$ over $\mathbb{F}_{q^2}(\underline t)$, but on the other hand makes it easy to compute polynomials over $\Fq(\underline t)$.
We return to the elimination process. Equations (\ref{00043}), (\ref{00044}), (\ref{00045}), and (\ref{00046}) yield:
\begin{subequations}
\begin{eqnarray}
-x_k^q+x_{k+1}^q&=&x_k-x_{k+1}+2x^2t_k y_{k-1}^q, \label{00055} \\
x_k^q+x_{k+1}^q&=&x_k+x_{k+1}-2t_{k-1} x_{k-1}^q, \label{00050}\\
-y_k^q+y_{k+1}^q&=&y_k-y_{k+1}+2t_k x_{k-1}^q, \label{00056} \\
y_k^q+y_{k+1}^q&=&y_k+y_{k+1}-2t_{k-1} y_{k-1}^q. \label{00057}
\end{eqnarray}
\end{subequations}
Plugging this into (\ref{00041}) and (\ref{00042}) yields:
\begin{subequations}
\begin{eqnarray}
0&=&-x_1+t_1x_1^q+s t_2 x_1^{q^2}+\dots+s t_{k-2}x_1^{q^{k-2}}-s(t_{k-1}^2-x^2t_k^2)x_1^{q^{k-1}} \nonumber \\
&&+ s t_{k-1}(x_k+x_{k+1})-x^2s t_k(-y_k+y_{k+1})-s x_{k+2}^q, \label{00047}\\
0&=&-y_1+t_1y_1^q+s t_2 y_1^{q^2}+\dots+s t_{k-2}y_1^{q^{k-2}}-s(t_{k-1}^2-x^2t_k^2)y_1^{q^{k-1}} \nonumber \\
&&-s t_{k}(-x_k+x_{k+1})+s t_{k-1}(y_k+y_{k+1})-s y_{k+2}^q. \label{00048}
\end{eqnarray}
\end{subequations}
In order to eliminate $y_k$ and $y_{k+1}$, we raise~(\ref{00047}) to its $q$-th power and add it to~(\ref{00041}) multiplied by $s^{q-1}t_{k}^{q-1}$.
Similarly, to eliminate $x_k$ and $x_{k+1}$ we raise~(\ref{00048}) to its $q$-th power and add it to~(\ref{00042}) multiplied by $s^{q-1}t_{k}^{q-1}$.
This yields:
\begin{subequations}
\begin{eqnarray}
0&=&-s^{q-1}t_k^{q-1}x_1+(s^{q-1}t_k^{q-1}t_1-1)x_1^q+(s^qt_k^{q-1}t_2+t_1^q)x_1^{q^2}+(s^qt_k^{q-1}t_3+s^q t_2^q)x_1^{q^3} \nonumber \\
&&+\dots+(s^qt_k^{q-1}t_{k-2}+s^q t_{k-3}^q)x_1^{q^{k-2}}+(s^q(t_{k-1}^2-x^2t_k^2)t_k^{q-1}+s^qt_{k-2}^q)x_1^{q^{k-1}} \nonumber \\
&&-s^q(t_{k-1}^2-x^2t_k^2)^qx_1^{q^k}-s^qt_k^{q-1}x_{k+2}^q-s^qx_{k+2}^{q^2}+(s^q t_{k-1}t_k^{q-1}+s^q t_{k-1}^q)(x_k^q+x_{k+1}^q), \nonumber \\ &&\label{00049} \\
0&=&-s^{q-1}t_k^{q-1}y_1+(s^{q-1}t_k^{q-1}t_1-1)y_1^q+(s^qt_k^{q-1}t_2+t_1^q)y_1^{q^2}+(s^qt_k^{q-1}t_3+s^q t_2^q)y_1^{q^3} \nonumber \\
&&+\dots+(s^qt_k^{q-1}t_{k-2}+s^q t_{k-3}^q)y_1^{q^{k-2}}+(s^q(t_{k-1}^2-x^2t_k^2)t_k^{q-1}+s^qt_{k-2}^q)y_1^{q^{k-1}} \nonumber \\
&&-s^q(t_{k-1}^2-x^2t_k^2)^qy_1^{q^k}-s^qt_k^{q-1}y_{k+2}^q-s^qy_{k+2}^{q^2}+(s^q t_{k-1}t_k^{q-1}+s^q t_{k-1}^q)(y_k^q+y_{k+1}^q).\nonumber\\
~
\end{eqnarray}
\end{subequations}
These equations are identical (considered as polynomials in $x_1$ and $y_1$).
Moreover, $x_{k+2}$ and $y_{k+2}$ depend in the same way on $x_1$ and $y_1$, respectively.
Similarly, $x_k^q+x_{k+1}^q$ and $y_k^q+y_{k+1}^q$ depend in the same way on $x_k+x_{k+1}, x_{1}$ and $y_k+y_{k+1}, y_{1}$.
Therefore, the polynomial~$f$ derived below with root $x_1$ has $y_1$ as root as well.

\bigskip
\noindent
Plugging (\ref{00050}) into~(\ref{00049}) yields
\begin{equation}
\begin{split}
0 &= -s^{q-1}t_k^{q-1}x_1+(s^{q-1}t_k^{q-1}t_1-1)x_1^q+(s^qt_k^{q-1}t_2+t_1^q)x_1^{q^2}+(s^qt_k^{q-1}t_3+s^q t_2^q)x_1^{q^3}\\
&+\dots+(s^qt_k^{q-1}t_{k-2}+s^q t_{k-3}^q)x_1^{q^{k-2}}+(s^q(-t_{k-1}^2-x^2t_k^2)t_k^{q-1}+s^qt_{k-2}^q-2s^q t_{k-1}^{q+1})x_1^{q^{k-1}} \\
&-s^q(t_{k-1}^2-x^2t_k^2)^qx_1^{q^k}-s^qt_k^{q-1}x_{k+2}^q-s^qx_{k+2}^{q^2}+s^qt_{k-1}( t_k^{q-1}+t_{k-1}^{q-1})(x_k+x_{k+1}). \label{00051} 
\end{split}
\end{equation}
Again, we raise (\ref{00051}) to its $q$-th power and add it to equation (\ref{00049}) multiplied by $(-s^{q^2-q}t_{k-1}^{q-1}(t_{k-1}^{q-1}+t_k^{q-1})^{q-1})$.
While doing so, we substitute $x_{k+2}$ using~(\ref{00052}).
We abbreviate $\alpha=(t_{k-1}^{q-1}+t_k^{q-1})^{q-1}$ and obtain the polynomial $f$ from the statement of the theorem, which therefore satisfies $f(x_1)=f(y_1)=0$.

It remains to show that the splitting field of $f$ equals the solution field of $(M,\Phi)$.
For any element $(x_1,y_1,\dots,x_{2k},y_{2k})^{tr}$ in the solution space of $M$, we computed above that $x_i, y_i$ are uniquely and polynomially determined by $x_1$ and $y_1$ for $2 \leq i \leq k-2, \ k+2\leq i\leq2k$.
From (\ref{00051}) we see that $(x_{k}+x_{k+1})$ depends uniquely and polynomially on $x_1,y_1$.
Similarly, $(y_{k}+y_{k+1})$ depends uniquely and polynomially on $x_1, y_1$.
Using (\ref{00041}) and (\ref{00042}) together with (\ref{00055}) and (\ref{00056}) we also get a unique and polynomial dependence of $(y_k-y_{k+1})$ and $(x_k-x_{k+1})$ on $x_1,y_1$.
Using $x_k=\frac{1}{2}(x_{k}+x_{k+1})+\frac{1}{2}(x_{k}-x_{k+1})$ and similar equalities yields that $x_k,x_{k+1},y_k,y_{k+1}$ are contained in the field extension generated by $x_1,y_1$.
Furthermore, $x_1$ and $y_1$ are both roots of $f$, hence the solution field of $M$ is contained in the splitting field of $f$.
On the other hand, $f$ has degree $q^3$ and there are $q^6$ solutions to $\phi(D)\phi_q(x)=x$, so every combination $(x_1,y_1)$ of roots of $f$ occurs as first two coordinates in a solution vector.
The solution field is generated by the coordinates of the solution vectors, so the splitting field of $f$ is contained in the solution field of $M$.
\end{proof}

\section{Polynomials for Exceptional Groups of Lie Type}
\label{sec:polynomials for exceptional groups of lie type}
\subsection{The Suzuki Groups $\Suz(q)$}

Throughout this section, $q$ denotes an odd power of 2, say $q=2^{2m+1}, \ m\geq0$. We prove that the Suzuki group $\Suz(q)$ occurs as a Galois group over $\mathbb{F}_{q}(t)$, the function field in one variable over $\Fq$.
Let $\G=\operatorname{Sp}_{4}(K)$ be the $4$-dimensional symplectic group over an algebraic closure $K$ of $\mathbb{F}_{q}(t)$. There is a Frobenius map $F:\G\rightarrow \G$ with $F^{2}=\phi_{q}$. The map $F$ is given explicitly in \cite[4.6.]{Geck} The group $\G^{F}\leq \operatorname{Sp}_{4}(\mathbb{F}_{q})$ is of type $\Suz(q)$ and there is only one isomorphism class of groups of type $\Suz(q)$. The order of $\Suz(q)$ is $q^2(q-1)(q^2+1)$.

The simple roots of $\operatorname{Sp}_4(K)$ are $\alpha=e_1-e_2$ and $\beta=2e_2$ with corresponding root subgroups $X_\alpha$ and 
$X_\beta$ and corresponding reflections $\omega_\alpha$ and $\omega_\beta$, respectively.
These are defined as
\begin{alignat*}{2}
x_\alpha(t) &=
\idblock{\mathbf{2},\mathbf{2}}{%
    \begin{pmatrix}
    		1&t\\
    		0&1 
    \end{pmatrix},
    \begin{pmatrix}
    		1&t\\
    		0&1 
    \end{pmatrix}
     }
,&\qquad
x_\beta(t) &=
\idblock{\mathbf{1},\mathbf{2}, \mathbf{1}}{%
    \begin{pmatrix}
    		1&t\\
    		0&1 
    \end{pmatrix}
     },\\
\omega_\alpha &=
\idblock{\mathbf{2},\mathbf{2}}{%
    \begin{pmatrix}
    		0&1\\
    		1&0 
    \end{pmatrix},
    \begin{pmatrix}
    		0&1\\
    		1&0 
    \end{pmatrix}
     },
&\omega_\beta &=
\idblock{\mathbf{1},\mathbf{2}, \mathbf{1}}{%
    \begin{pmatrix}
    		0&1\\
    		1&0 
    \end{pmatrix}
     }.
\end{alignat*}
\noindent
The Frobenius map $F$ permutes the root subgroups in the following way: 
$$F(x_\alpha(t))=x_\beta(t^{2^{m+1}}), \ F(\omega_\alpha)=\omega_\beta, \ F(x_\beta(t))=x_\beta(t^{2^{m}}), \ F(\omega_\beta)=\omega_\alpha. $$

\noindent Define $\zwmat \coloneqq x_\alpha(t) \cdot \omega_\alpha \ \in \G(\mathbb{F}_{q^2}(t))$ and $D \coloneqq \zwmat \cdot F(\zwmat)$.
Then
$$D= x_\alpha(t) \cdot \omega_\alpha \cdot x_\beta(t^{2^{m+1}}) \cdot \omega_\beta =
\begin{pmatrix} 
	t &t^{2^{m+1}} &1&0 \\  
	1 &0 &0 &0 \\ 
	0& t & 0 &1 \\  
	0&1 &0 & 0 
\end{pmatrix}.$$

\begin{lemma}\label{uschrsuz}
There are specializations $\mathbb{F}_{q}[t] \rightarrow  \mathbb{F}_{q}$, $t\mapsto u_{1/2} \in \mathbb{F}_{q}$ such that the specialized matrices $D_{u_{1/2}}$ have order $(q\pm 2^{m+1}+1)$.
\end{lemma}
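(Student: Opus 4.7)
The plan is threefold: first compute $\charpoly{D}$ explicitly, then show that the characteristic polynomial of any generator of the Suzuki torus of order $r_\pm \coloneqq q \pm 2^{m+1} + 1$ has the same shape, and finally conclude by a conjugacy argument in $\Sp_4(\Fqbar)$.

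For the first step I would expand $\charpoly{D}(X) = \det(X I_4 - D)$ by cofactor expansion along the last row, which has only two nonzero entries. Simplifying in characteristic~$2$ gives the palindromic polynomial
$$\charpoly{D}(X) \;=\; X^4 \;+\; t\,X^3 \;+\; t^{2^{m+1}} X^2 \;+\; t\,X \;+\; 1 \;\in\; \Fq(t)[X].$$
The salient feature is that the coefficient of $X^2$ is the $2^{m+1}$-th power of the coefficient of $X^3$. This is precisely the algebraic footprint of the Suzuki Frobenius and already hints that $D$ should specialize to elements of $\Suz(q)$.

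For the second step I would match $\charpoly{D}$ with the characteristic polynomial of a generator $\genname$ of the cyclic maximal torus of $\Suz(q)$ of order $r_\pm$. Since $r_\pm$ divides $q^2 + 1$, a generator $\zeta \in \F_{q^4}^\times$ of this cyclic group satisfies $\zeta^{q^2} = \zeta^{-1}$; hence the four eigenvalues of $\genname$ on the natural module are the Frobenius orbit $\{\zeta, \zeta^{-1}, \zeta^q, \zeta^{-q}\}$. Expanding the product in characteristic~$2$ gives
$$\charpoly{\genname}(X) \;=\; X^4 \;+\; u\,X^3 \;+\; v\,X^2 \;+\; u\,X \;+\; 1,$$
where $u \coloneqq \zeta + \zeta^{-1} + \zeta^q + \zeta^{-q}$ and $v \coloneqq (\zeta + \zeta^{-1})(\zeta^q + \zeta^{-q})$; note $u \in \Fq$ because $\zeta^{q^2} = \zeta^{-1}$ forces $u^q = u$. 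The crucial step is to verify $v = u^{2^{m+1}}$. Expanding $u^{2^{m+1}}$ produces four powers of $\zeta$ whose exponents, after reduction modulo $r_\pm$ using the defining congruence $2^{m+1} \equiv \mp(q+1) \pmod{r_\pm}$ and $\zeta^{q^2} = \zeta^{-1}$, match exactly the four terms of $v$.

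Finally, setting $u_{1/2} \in \Fq$ to be the value of $u$ coming from a generator of the torus of order $r_\pm$, the specialization $t \mapsto u_{1/2}$ sends $\charpoly{D}$ to $\charpoly{\genname_{1/2}}$. Since $\genname_{1/2}$ is regular semisimple, this polynomial is separable, so $D_{u_{1/2}}$ and $\genname_{1/2}$ share the same Jordan form and are therefore conjugate in $\Sp_4(\Fqbar)$; in particular they have the same order, namely $r_\pm$. The main obstacle will be the modular arithmetic identity $v = u^{2^{m+1}}$, which ties the elementary symmetric functions of the Frobenius orbit of $\zeta$ to the Suzuki parameter $2^{m+1}$; everything else is either a direct $4\times 4$ determinant computation or a standard conjugacy argument in $\Sp_4$.
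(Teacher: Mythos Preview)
Your proposal is correct and follows essentially the same approach as the paper's proof: compute the characteristic polynomial $X^4 + tX^3 + t^{2^{m+1}}X^2 + tX + 1$, take a primitive $r_\pm$-th root of unity $\zeta$, verify that its $\Fq$-minimal polynomial has the same palindromic shape via the key identity $u^{2^{m+1}} = v$ (using $\zeta^{2^{m+1}} = \zeta^{\mp(q+1)}$ and $\zeta^{q^2} = \zeta^{-1}$), and conclude by matching Jordan forms. The only cosmetic difference is that the paper writes the middle coefficient as the sum $\zeta^{1+q} + \zeta^{-1-q} + \zeta^{q-1} + \zeta^{1-q}$ rather than your factored form $(\zeta+\zeta^{-1})(\zeta^q+\zeta^{-q})$.
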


\begin{proof}
The characteristic polynomial of $D$ is
$$h(X)=X^4+t\cdot X^3 + t^{2^{m+1}}\cdot X^2 + t\cdot X +1.$$
Let $\zeta_{1}$, $\zeta_{2} \in \overline{\F}_{2}$ be roots of unity of order $(q+2^{m+1}+1)$, $(q-2^{m+1}+1)$, respectively. Then $\zeta_{i}^{(q^2+1)}=\zeta_{i}^{(q+2^{m+1}+1)\cdot(q-2^{m+1}+1)}=1$, hence $\zeta_{i}^{q^2}=\zeta_{i}^{-1}$ and $\zeta_{i}^{q^4}=\zeta_{i}.$ 
Therefore, the minimal polynomial $g_{i}$ of $\zeta_i$ over $\mathbb{F}_{q}$ is
\begin{center}
	$\left(X-\zeta_{i}\right) \left(X-\zeta_{i}^q\right) (X-\zeta_{i}^{q^2})  (X-\zeta_{i}^{q^3})=\left(X-\zeta_{i}\right) 				\left(X-\zeta_{i}^q\right)  \left(X-\zeta_{i}^{-1}\right)  \left(X-\zeta_{i}^{-q}\right)$.
\end{center}
Note that the inverse of any root of $g_{i}$ is also a root of $g_{i}$.
By Lemma~\ref{lem:symmetric coefficients} the coefficients of $g_{i}$ have the following symmetry:
$g_{i}(X)=X^4+a_{i} X^3 + b_{i} X^2 + a_{i} X +1$,
where $a_{i} \coloneqq \zeta_{i}+\zeta_{i}^{-1} +\zeta_{i}^q +\zeta_{i}^{-q}$, 
\ \ $b_{i} \coloneqq \zeta_{i}^{1+q}+\zeta_{i}^{-1-q} +\zeta_{i}^{-1+q} +\zeta_{i}^{1-q}$.
Using $\zeta_{i}^{2^{m+1}}=\zeta_{i}^{\pm (q+1)}$, we get 
$a_{i}^{2^{(m+1)}}=\zeta_{i}^{1+q}+\zeta_{i}^{-1-q} +\zeta_{i}^{q+q^2} +\zeta_{i}^{-q-q^2}=\zeta_{i}^{1+q}+\zeta_{i}^{-1-q} +\zeta_{i}^{q-1} +\zeta_{i}^{-q+1}=b_{i}.$
Thus the characteristic polynomial of $D_{a_{i}}$ equals $g_i$, i.e. the Jordan canonical form of $D_{a_{i}}$ over $\mathbb{F}_{q^4}$ is ${\diag}(\zeta_{i},\zeta_{i}^{-1},\zeta_{i}^{q},\zeta_{i}^{-q})$.
Therefore, $D_{a_{1}}$ and $D_{a_{2}}$ are of order $(q+2^{m+1}+1)$ and $(q-2^{m+1}+1)$, respectively.
\end{proof}

\begin{thm}\label{frobmodsuz}
Let $(M,\Phi)$ be the $4$-dimensional Frobenius module over $(\mathbb{F}_{q}(t), \phi_{q})$ such that the representing matrix of~$\Phi$ with respect to a fixed basis of~$M$ equals~$D$.
Then
\begin{enumerate}
  \item $\Gal^{\Phi}(M) \cong {\Suz(q)}$.
  \item The solution field of $M$ is generated by the roots of the additive polynomial
    $$f(X) \coloneqq X^{q^4}+t^q\cdot X^{q^3} + t^{2^{m+1}}\cdot X^{q^2}+t\cdot X^q +X \in  \mathbb{F}_q(t)[X].$$
    In particular, $\Suz(q) \cong \Gal_{\mathbb{F}_q(t)}(f)$.
\end{enumerate}
\end{thm}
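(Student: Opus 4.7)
The plan is to apply the standard two-step method: an upper bound via Theorem~\ref{thm:twupperbound}, a lower bound via Theorem~\ref{thm:lowerbound} combined with Lemma~\ref{uschrsuz}, and then to obtain part~(b) by solving the defining system $D\cdot\phi_q(x)=x$ and eliminating all coordinates but one.

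For the upper bound in~(a), observe that by construction $D = \zwmat \cdot F(\zwmat)$ with $\zwmat \in \G(\Fq(t))$, where $\G = \Sp_4$ carries the Frobenius $F$ with $F^2 = \phi_q$ and $\G^F \cong \Suz(q)$. Theorem~\ref{thm:twupperbound} therefore yields $\GalPhi(M) \leq \Suz(q)$. For the reverse inclusion, I would apply Theorem~\ref{thm:lowerbound} at the localization of $\Fq[t]$ at the maximal ideal $(t-u_i)$, where the residue field is already $\Fq$, so $f=1$ and $\widehat{D} = D_{u_i}$. Lemma~\ref{uschrsuz} then produces two elements $\gentilde_1,\gentilde_2 \in \GalPhi(M)$ which are $\G(\Fqbar)$-conjugate to matrices of orders $q + 2^{m+1} + 1$ and $q - 2^{m+1} + 1$, respectively.

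The main obstacle, and the step that replaces the use of Table~\ref{table:generators} for this exceptional case, is to verify that any pair of elements of these two orders generates $\Suz(q)$. The plan is to invoke Suzuki's classification of maximal subgroups of $\Suz(q)$: besides the Borel subgroup and the subfield subgroups $\Suz(q_0)$ (with $q=q_0^r$, $r$ prime), the only maximal subgroups are the normalizers of the three cyclic maximal tori of orders $q-1$, $q+2^{m+1}+1$ and $q-2^{m+1}+1$. Since $q+2^{m+1}+1$ and $q-2^{m+1}+1$ are coprime, the two generators cannot both lie in the normalizer of a single torus, and a primitive-prime-divisor (Zsigmondy) argument rules out the subfield subgroups for $r>1$. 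Hence $\langle \gentilde_1,\gentilde_2\rangle = \Suz(q)$, so that $\GalPhi(M) \geq \Suz(q)$.

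For part~(b), write $x = (x_1,x_2,x_3,x_4)\tr$ and set $X \coloneqq x_1$. Reading $D\cdot\phi_q(x)=x$ row by row gives immediately $x_2 = X^q$ (row~2), $x_4 = X^{q^2}$ (row~4) and $x_3 = t X^{q^2} + X^{q^3}$ (row~3). Raising the last identity to the $q$-th power, substituting into row~1 and using characteristic~$2$ collapses everything to
\begin{equation*}
  X^{q^4} + t^q X^{q^3} + t^{2^{m+1}} X^{q^2} + t X^q + X = 0,
\end{equation*}
which is precisely $f(X) = 0$. Since each of $x_2,x_3,x_4$ is determined polynomially by $X=x_1$ and $\deg f = q^4$, Lemma~\ref{lem:galoisgroup} applies and yields $\Gal_{\Fq(t)}(f) \cong \GalPhi(M) \cong \Suz(q)$.
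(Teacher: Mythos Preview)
Your argument for part~(b) is correct and is exactly what the paper does. Your argument for part~(a) is also essentially the paper's argument when $m>0$: upper bound via Theorem~\ref{thm:twupperbound}, lower bound via Lemma~\ref{uschrsuz} and Theorem~\ref{thm:lowerbound}, and then an appeal to Suzuki's list of maximal subgroups to conclude that no proper subgroup can contain elements of both orders $q\pm 2^{m+1}+1$.

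There is, however, a genuine gap at $m=0$, i.e.\ $q=2$. In that case $q-2^{m+1}+1=1$, so Lemma~\ref{uschrsuz} only furnishes a single nontrivial specialization, of order $q+2^{m+1}+1=5$; the second ``generator'' is the identity and gives no information. Your coprimality/Zsigmondy argument therefore collapses for $q=2$, and an element of order~$5$ alone does not force $\GalPhi(M)$ to be all of $\Suz(2)$ (which has order~$20$). The paper treats this case separately: it keeps the order-$5$ element, and in addition specializes $t\mapsto 0$ to obtain an element of order~$4$ inside $\GalPhi(M)$; since $|\Suz(2)|=20$, these two orders together force $\GalPhi(M)=\Suz(2)$. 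You should add this case distinction.
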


\begin{proof}
a) We have $\Gal^{\Phi}(M) \leq \G^F\cong {\Suz(q)}$ by Theorem~\ref{thm:twupperbound}.
In case $m=0$, we know by Lemma~\ref{uschrsuz} that $\Gal^{\Phi}(M)$ contains an element of order $5$ and is contained in $\G^F$, which is of cardinality~$20$.
The specialization $\mathbb{F}_{2}(t) \rightarrow  \mathbb{F}_{2}$, $t\mapsto 0$ maps $D$ to an element of order $4$, which, again by Theorem~\ref{thm:lowerbound}, is a conjugate of an element in $\Gal^{\Phi}(M)$. 
Now in case $m>0$, Theorem~\ref{thm:lowerbound} together with Lemma~\ref{uschrsuz} yield two elements contained in $\Gal^{\Phi}(M)$ of orders $(q+2^{m+1}+1)$, $(q-2^{m+1}+1)$, respectively.
From the list of maximal subgroups of $\Suz(q)$, given in \cite[Thm.\;9]{Suzuki}, it can be easily seen that no proper subgroup contains elements of these orders.

\medskip\noindent
b) The solution field $E \geq \mathbb{F}_{q}$ of $(M,\Phi)$ is generated by the coordinates of the $q^4$ solutions to $D\cdot \phi_{q}({x}) ={x}$, $x=(x_1,x_2,x_3,x_4) \in \Fqbar^{\;4}$.
This equation is equivalent to the system
\begin{center} 
	$\left( \! \! \begin{array}{cccc}   
			t &t^{2^{m+1}} &1&0 \\  
			1 &0 &0 &0 \\ 
			0& t & 0 &1 \\  
			0&1 &0 & 0  
	\end{array} \! \!	\right) \cdot 
	\left( \! \! \begin{array}{cccc}   
			x_{1}^q \\  
			x_{2}^q \\ 
			x_{3}^q \\  
			x_{4}^q  
	\end{array} \! \! \right)=
	\left( \! \! \begin{array}{cccc}   
			x_{1} \\  
			x_{2} \\ 
			x_{3} \\  
			x_{4}  
	\end{array} \! \! \right)$.
\end{center}
Abbreviating $X=x_1$, the second and fourth rows yield $x_{2}=X^q$ and $x_{4}=x_{2}^q=X^{q^2}$.
Next, we get $x_{3}=t\cdot x_2^q+x_4^q=t\cdot X^{q^2}+X^{q^3}$ out of the third row.
Plugging everything into the first row yields $t\cdot X^q + t^{2^{m+1}}\cdot X^{q^2}+t^{q}\cdot X^{q^3}+X^{q^4}=X.$
The theorem now follows from Lemma~\ref{lem:galoisgroup}.
\end{proof}

\subsection{The Dickson Groups $\Dickson(q)$}

In this section, we realize the Dickson groups $\Dickson(q)$ as Galois groups over $\Fq(t,s)$.
Such a realization is already given in~\cite{Malle} but we would like to apply the unified approach by constructing a Frobenius module whose representing matrix lies in the Steinberg cross section.
We obtain slightly simpler polynomials than those in~\cite{Malle}.

\bigskip\noindent
Let first $q=2^r$ be even. The order of $\Dickson(q)$ is $$q^6(q^2-1)(q^6-1)=q^6 (q-1)^2 (q+1)^2 (q^2+q+1) (q^2-q+1).$$
We use the well-known $6$-dimensional representation of $\Dickson$ as linear algebraic group over $\overline{\Fq(t,s)}$. Then $\Dickson(q)=\Dickson(\Fq)$. Details can be found in \cite[§\,2]{Malle}. The group $\Dickson(q)$ has two simple roots $\alpha$ and $\beta$.
The root subgroups are determined in \cite{Malle} and it is easy to compute the corresponding reflections.
\footnotesize
\begin{alignat*}{2}
x_\alpha(t) \! &= \! \! 
\idblock{\mathbf{2},\mathbf{2},\mathbf{2}}{%
    \begin{pmatrix}
    		1&t\\
    		0&1 
    \end{pmatrix},
    \begin{pmatrix}
    		1&t^2\\
    		0&1 
    \end{pmatrix},
    \begin{pmatrix}
    		1&t\\
    		0&1 
    \end{pmatrix}
     },&\qquad
x_\beta(t) \! &= \! \!
\idblock{\mathbf{1},\mathbf{2},\mathbf{2},\mathbf{1}}{%
    \begin{pmatrix}
    		1&t\\
    		0&1 
    \end{pmatrix},
    \begin{pmatrix}
    		1&t\\
    		0&1 
    \end{pmatrix}
     },\\
w_\alpha \! &= \! \!
\idblock{\mathbf{2},\mathbf{2},\mathbf{2}}{%
    \begin{pmatrix}
    		0&1\\
    		1&0 
    \end{pmatrix},
    \begin{pmatrix}
    		0&1\\
    		1&0
    \end{pmatrix},
    \begin{pmatrix}
    		0&1\\
    		1&0
    \end{pmatrix}
     },
&w_\beta \! &= \! \!
\idblock{\mathbf{1},\mathbf{2},\mathbf{2},\mathbf{1}}{%
    \begin{pmatrix}
    		0&1\\
    		1&0 
    \end{pmatrix},
    \begin{pmatrix}
    		0&1\\
    		1&0
    \end{pmatrix}
     }.
\end{alignat*}
\normalsize
\noindent
We now define $D=x_\alpha(t)w_\alpha x_\beta(s) w_\beta$ and calculate
$$D=
\begin{pmatrix}
	t&s&1&0&0&0 \\
	1&0&0&0&0&0 \\
	0&t^2&0&s&1&0 \\
	0&1&0&0&0&0 \\
	0&0&0&t&0&1 \\
	0&0&0&1&0&0 \\
\end{pmatrix}. $$ 

\begin{thm}
Let $q=2^r$ be even and let $(M, \Phi)$ be a $6$-dimensional Frobenius module over $\Fq(t,s)$ such that the representing matrix of $\Phi$ equals $D$ for some fixed basis $B$ of $M$.
Then
\begin{enumerate}
  \item $\Gal^\Phi(M)\cong \Dickson(q)$.
  \item The solution field of $M$ is generated by the roots of the additive polynomial
    $$f(X)=X^{q^6}+t^{q^2}X^{q^5}+s^qX^{q^4}+t^{2q}X^{q^3}+sX^{q^2}+tX^q+X \;\;\in \mathbb{F}_q(t,s)[X].$$
    In particular, $\Gal_{\mathbb{F}_q(t,s)}(f) \cong \Dickson(q)$.
\end{enumerate}
\end{thm}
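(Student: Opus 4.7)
The plan is to follow the template established by Theorem~\ref{frobmodsuz} for the Suzuki groups. For part~(a), the upper bound is immediate: since $D = x_\alpha(t) \, w_\alpha \, x_\beta(s) \, w_\beta$ lies in $\Dickson(\Fq(t,s))$ by construction and $\Dickson$ is connected, Theorem~\ref{thm:upperbound} gives $\GalPhi(M) \leq \Dickson(q)$.

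For the lower bound I would apply Theorem~\ref{thm:lowerbound}(ii). First I compute the characteristic polynomial $\charpoly{D} \in \Fq(t,s)[X]$ of $D$ and then, in analogy with Lemma~\ref{uschrsuz}, exhibit specializations $(t,s) \mapsto (u_i, v_i) \in \Fq^2$ such that the specialized matrices are regular semisimple of orders $q^2 \plus q \plus 1$ and $q^2 \minus q \plus 1$, respectively. These are the orders of the two Coxeter-like cyclic maximal tori of $\Dickson(q)$, and the corresponding characteristic polynomials are products of Galois-conjugate linear factors determined by a root of unity of that order, so the symmetry between the coefficients of $\charpoly{D}$ and those characteristic polynomials makes it possible to solve for $(u_i, v_i)$. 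By Theorem~\ref{thm:lowerbound}(ii), $\GalPhi(M)$ then contains $\Dickson(\Fqbar)$-conjugates of both specialized matrices. Consulting the classification of maximal subgroups of $\Dickson(q)$ (as used by Malle in~\cite{Malle}), one verifies that no proper subgroup simultaneously contains elements of both orders, forcing $\GalPhi(M) = \Dickson(q)$.

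For part~(b), I would solve the defining system $D \cdot \phiq(x) = x$ for $x = (x_1,\ldots,x_6)\tr$ by direct back-substitution. Setting $X = x_1$, rows~2, 4 and 6 of $D$ immediately give
\begin{equation*}
  x_2 = X^{q}, \qquad x_4 = X^{q^2}, \qquad x_6 = X^{q^3}.
\end{equation*}
Substitution into rows~5 and 3 then yields $x_5 = t X^{q^3} + X^{q^4}$ and $x_3 = t^2 X^{q^2} + s X^{q^3} + t^q X^{q^4} + X^{q^5}$. Plugging these expressions into row~1 and using that we are in characteristic~$2$ to collapse $\minus X = X$ produces precisely the additive polynomial $f$ of the statement. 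Since each coordinate $x_2,\ldots,x_6$ is uniquely and polynomially determined by~$X$, Lemma~\ref{lem:galoisgroup} gives $\Gal_{\Fq(t,s)}(f) \cong \GalPhi(M)$, which equals $\Dickson(q)$ by part~(a).

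The main obstacle lies in the lower bound. It is not enough to produce specializations whose orders each individually avoid the maximal subgroups: subgroups such as $\SL_2(q^3) \cdot 3$ contain cyclic subgroups of each of the orders $q^2 \pm q \plus 1$ separately, so one cannot conclude solely from the orders of the two specializations that they do not lie in a common maximal subgroup. A careful traversal of Kleidman's list is therefore required to check that no maximal subgroup contains elements of \emph{both} orders at once; alternatively, one might add a third specialization producing a unipotent element of order~$q$ (for instance via $t \mapsto 0$, $s \mapsto 0$) to rule out all almost-simple and reducible maximal subgroups in one stroke.
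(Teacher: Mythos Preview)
Your proposal is correct and follows the same route as the paper's proof. The paper computes $\charpoly{D}(X)=X^6+tX^5+sX^4+t^2X^3+sX^2+tX+1$, takes primitive roots of unity $\zeta_i$ of orders $q^2\pm q+1$, and uses the characteristic-$2$ identity $\delta_i=\beta_i^2$ (where $\beta_i,\gamma_i,\delta_i$ are the symmetric coefficients of $\prod(X-\zeta_i^{\pm q^j})$) to solve for the specializations $t\mapsto\beta_i$, $s\mapsto\gamma_i$; part~(b) is identical to yours.

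Two remarks on your lower-bound discussion. First, the relevant maximal-subgroup list for even $q$ is Cooperstein~\cite{Cooperstein}, not Kleidman (who treats odd $q$); your worry about $\SL_2(q^3)\cdot 3$ is misplaced, as no such subgroup occurs among the maximal subgroups of $\Dickson(2^r)$. Second, the paper does need extra specializations, but only for small $q$: for $q=4$ an element of order~$5$ is produced, and for $q=2$ one of order~$12$, since in those cases the pair of orders $q^2\pm q+1$ alone does not suffice. Your suggested ``third unipotent specialization'' is thus unnecessary for $q\geq 8$ and would not by itself resolve the small cases.
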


\begin{proof}
a) Theorem~\ref{thm:upperbound} yields $\Gal^\Phi(M)\leq \Dickson(2^r)$.
For the lower bound, note that the characteristic polynomial of $D$ is
$$h(X)=X^6+tX^5+sX^4+t^2X^3+sX^2+tX+1.$$ 
We construct elements of order $q^2 \pm q+1$ contained in $\Gal^\Phi(M)$.
Let $\zeta_1$ and $\zeta_2$ be roots of unity of order $q^2+q+1$ and $q^2-q+1$, respectively.
Note that $\zeta_1^{q^3}=\zeta_1$ and $\zeta_2^{q^3}=\zeta_2^{-1}$.
Define $h_i(X)=(X-\zeta_i)(X-\zeta_i^q)(X-\zeta_i^{q^2})(X-\zeta_i^{-1})(X-\zeta_i^{-q})(X-\zeta_i^{-q^2}) \in \Fq[X],$ for $i=1,2$.
The inverse of any root of $h_i$ is also a root, so $h_i$ has the following symmetry by Lemma~\ref{lem:symmetric coefficients}.
$$h_i(X)=X^6+\beta_iX^5+\gamma_iX^4+\delta_iX^3+\gamma_iX^2+\beta_iX+1,$$ 
where
\begin{eqnarray*}
	\beta_i&=&\zeta_i+\zeta_i^q+\zeta_i^{q^2}+\zeta_i^{-1}+\zeta_i^{-q}+\zeta_i^{-q^2}, \\
	\gamma_i&=&\zeta_i^{1+q}+\zeta_i^{1+q^2}+\zeta_i^{1-q}+\zeta_i^{1-q^2}+\zeta_i^{q+q^2}+\zeta_i^{q-1}+\zeta_i^{q-q^2}+
			\zeta_i^{q^2-1}+\zeta_i^{q^2-q} \\
			&&+\zeta_i^{-1-q}+\zeta_i^{-1-q^2}+\zeta_i^{-q-q^2}+1, \\
	\delta_i&=&\zeta_i^2+\zeta_i^{2q}+\zeta_i^{2q^2}+\zeta_i^{-2}+\zeta_i^{-2q}+\zeta_i^{-2q^2}=\beta_i^2.
\end{eqnarray*}
We can now define the specializations $\psi_i \colon \Fq[t,s] \to \Fq$, $t \mapsto \beta_i$, $s \mapsto \gamma_i$ for $i=1,2$.
Then $\psi_i(D)$ has the characteristic polynomial $h_i$.
Therefore, the order of $\zeta_i$ divides the order of $\psi_i(D)$.
Applying Theorem~\ref{thm:lowerbound}, we obtain that $\Gal^\Phi(M)$ contains elements of the orders $q^2 \pm q+1$.
For $q\geq8$, no proper subgroup of $\Dickson(2^r)$ contains such elements, which is be easily seen from the list of maximal subgroups in \cite[2.3.-2.5.]{Cooperstein}.
In case $q=4$, it suffices to construct an additional element contained in $\Gal^\Phi(M)$ of order $5$.
Let $\alpha \in \mathbb{F}_{16}$ be an element of order $5$ such that $a=\alpha+\alpha^{-1}\neq 1$.
Then $a \in \mathbb{F}_{4}^{\times}$ and so $a^2+a+1=0$.
Therefore,
\begin{align*}
	&(X^2+aX+1)(X^4+X^3+X^2+X+1) \\
	=\;\;&X^6+(1+a)X^5+aX^4+(1+a)^2X^3+aX^2+(1+a)X+1.
\end{align*}
Define $\psi \colon \Fq[t,s] \to \Fq$, $t \mapsto 1+a$, $s \mapsto a.$
Then the characteristic polynomial of $\psi(D)$ is $(X-\alpha)(X-\alpha^q) \cdot h(X)$, where $h(X)=X^4+X^3+X^2+X+1.$
We compute $h(\alpha)=\alpha^4+\alpha^3+\alpha^2+1 
=(a\alpha+1)^2+\alpha(a \alpha+1)+(a \alpha+1)+1 =\alpha\neq 0.$
Therefore, $(X-\alpha)(X-\alpha^q)$ divides the characteristic polynomial of $\psi(D)$ and its minimal polynomial both exactly once.
Consequently, the Jordan canonical form of $\psi(D)$ over $\mathbb{F}_{16}$ contains the $2 \times 2$ block $\diag(\alpha, \alpha^q)$.
Thus $5$ divides the order of $\psi(D)$.
Again by Theorem~\ref{thm:lowerbound}, $\Gal^\Phi(M)$ contains an element of $5$. 
For $q=2$, the specialization $\psi \colon \Fq[t,s] \to \Fq$, $t \mapsto 1$, $s \mapsto 0$ yields an additional element in $\Gal^\Phi(M)$ conjugate to $\psi(D)$ which is of order $12$, and no proper subgroup of $\Dickson(2)$ contains elements of order $2^2 \pm 2+1$ and $12$.  

\medskip\noindent
b) For $x=(x_1,\dots,x_6) \in \overline{\mathbb{F}_2(t,s)}$, we solve the equation $D \cdot \phi_q(x)=x$.
Letting $X=x_1$ and evaluating the rows successively, we get the following equivalent set of equations:
$x_2=X^q, \ x_4=X^{q^2}, \ x_6=X^{q^3}, \ x_5=tX^{q^3}+X^{q^4}, \ x_3=t^2X^{q^2}+sX^{q^3}+t^qX^{q^4}+X^{q^5}, \ X=tX^q+sX^{q^2}+t^{2q}X^{q^3}+s^qX^{q^4}+t^{q^2}X^{q^5}+X^{q^6}$
Hence the coordinates of a solution to $D\cdot\phi_q(x)=x$ all lie in the field extension generated by the first coordinate.
Since the first coordinate of any solution is a root of $f$, the theorem follows from Lemma~\ref{lem:galoisgroup}
\end{proof}

\begin{rem}
Compare the obtained polynomial with the one computed in \cite[Thm~4.1]{Malle}:
$\Gal_{\mathbb{F}_q(r,u)}(\tilde f) \cong \Dickson(q),$ where $\tilde f$ is defined as follows.
\begin{eqnarray*}
	\tilde f(X)&=&X^{q^6}+u^{e_2}r^{e_4}X^{q^5}+(u^{e_1}r^{e_1}+u^{e_3}r^{e_1}+r^{e_1}+r^{e_3}+1)X^{q^4} \\
					&&u^{e_2}r^{e_4}(r^{q^3+q^2}+r^{q^3-q^2}+1)X^{q^3}+r^{e_1}(u^{e_1}r^{q^2-1}+u^{e_1}r^{q^2+q}+u^{e_1} \\
					&&+u^{e_3}+1)X^{q^2} u^{e_2}r^{q^4+2q^2-q}X^q+u^{e_1}r^{q^4-1}X
\end{eqnarray*}
with $e_1 \coloneqq q^4-q^2, \ e_2 \coloneqq q^4-q^3, \ e_3 \coloneqq q^4+q^3, \ e_4 \coloneqq q^4-q^3+2q^2$. 
\end{rem}

\bigskip\noindent
Let now $q$ be odd.
We use the well-known $7$-dimensional representation of $\Dickson$ as linear algebraic group over $\overline{\Fq(t,s)}$.
Again, details can be found in \cite[§\,2]{Malle}.
Then $\Dickson(q)=\Dickson(\Fq)$ and the root subgroups corresponding to the two simple roots of $\Dickson$ are computed in \cite{Malle}.
It is easy to compute corresponding reflections.

\smallskip\footnotesize \noindent $ x_\alpha(t) \! = \! \! 
\idblock{\mathbf{2},\mathbf{3},\mathbf{2}}{%
    \begin{pmatrix}
    		1&t\\
    		0&1 
    \end{pmatrix},
    \begin{pmatrix}
    		1&t&-t^2\\
    		0&1&-2t\\
    		0&0&1 
    \end{pmatrix},
    \begin{pmatrix}
    		1&-t\\
    		0&1 
    \end{pmatrix}
     },$  \
    $\ x_\beta(t) \! = \! \!
\idblock{\mathbf{1},\mathbf{2},\mathbf{1},\mathbf{2},\mathbf{1}}{%
    \begin{pmatrix}
    		1&t\\
    		0&1 
    \end{pmatrix},
    \begin{pmatrix}
    		1&-t\\
    		0&1 
    \end{pmatrix}
     },$
$w_\alpha \! = \! \!
\idblock{\mathbf{2},\mathbf{3},\mathbf{2}}{%
    \begin{pmatrix}
    		0&1\\
    		-1&0 
    \end{pmatrix},
    \begin{pmatrix}
    		0&0&-1\\
    		0&-1&0\\
    		-1&0&0
    \end{pmatrix},
    \begin{pmatrix}
    		0&-1\\
    		1&0
    \end{pmatrix}
     },$  
  \  $w_\beta \! = \! \!
\idblock{\mathbf{1},\mathbf{2},\mathbf{1},\mathbf{2},\mathbf{1}}{%
    \begin{pmatrix}
    		0&1\\
    		-1&0 
    \end{pmatrix},
    \begin{pmatrix}
    		0&-1\\
    		1&0
    \end{pmatrix}
     }$.
\normalsize We define $D=x_\alpha(t)w_\alpha x_\beta(s) w_\beta$ and calculate
$$D=
\begin{pmatrix}
	-t&-s&1&0&0&0&0 \\
	-1&0&0&0&0&0&0 \\
	0&-t^2&0&-t&s&1&0 \\
	0&-2t&0&-1&0&0&0 \\
	0&1&0&0&0&0&0 \\
	0&0&0&0&-t&0&-1 \\
	0&0&0&0&1&0&0 \\
\end{pmatrix}. $$ 

\begin{thm}
\label{thm:polynomg2oddq}
Let $q$ be odd and $(M, \Phi)$ be a $7$-dimensional Frobenius module over $\Fq(t,s)$ such that the representing matrix of $\Phi$ equals $D$ for some fixed basis $B$ of $M$.
Then
\begin{enumerate}
  \item $\Gal^\Phi(M) \cong \Dickson(q)$.
  \item The solution field of $M$ is generated by the roots of the additive polynomial
    \begin{align*}
      f(X)=\;\;   &X^{q^7}+(t^{q^2-q}+t^{q^3})X^{q^6}+(t^{2q^2-q}-s^{q^2})X^{q^5}+(-s^qt^{q^2-q}-t^{2q^2})X^{q^4} \\
            +&(t^{q^2+q}+s^q)X^{q^3}+(st^{q^2-q}-t^q)X^{q^2}+(-t^{q^2-q+1}-1)X^q-t^{q^2-q}X.
    \end{align*}
    In particular, $\Gal_{\Fq(t,s)}(f) \cong \Dickson(q)$.
\end{enumerate}
\end{thm}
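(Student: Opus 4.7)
The plan is to mirror the strategy used for $\Dickson(q)$ in even characteristic: first apply the upper bound theorem, then find two strong-type specializations via the characteristic polynomial, and finally solve the defining system to extract~$f$.

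\textbf{Upper bound.} By construction $D = x_\alpha(t)\,w_\alpha\,x_\beta(s)\,w_\beta$ lies in $\Dickson(\Fq(t,s))$, so Theorem~\ref{thm:upperbound} gives $\GalPhi(M) \le \Dickson(q)$.

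\textbf{Lower bound.} First I would compute $\charpoly{D}(X)$; since $D$ stabilizes the defining quadratic form, its roots come in inverse pairs, so by Lemma~\ref{lem:symmetric coefficients} the characteristic polynomial is of palindromic shape
$$h(X) = X^7 + a_1 X^6 + a_2 X^5 + a_3 X^4 + a_3 X^3 + a_2 X^2 + a_1 X + 1,$$
(after possibly dividing out the fixed root $X\minus1$ or $X\plus1$ that every odd-dimensional orthogonal element carries) with the coefficients $a_i$ explicit polynomials in $t,s$. The key point is that the two coefficients $t,s$ give enough freedom so that $a_1$ and $a_2$ can be specialized independently in $\Fq$; comparison with Lemma~\ref{lem:symmetric coefficients} then lets me match $h$ to the characteristic polynomial of any element of a maximal torus of order $q^2 \pm q + 1$ in $\Dickson(q)$. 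Concretely, I pick roots of unity $\zeta_i$ of order $q^2 + q + 1$ and $q^2 - q + 1$, form the palindromic product $\prod (X-\zeta_i^{q^j})(X-\zeta_i^{-q^j})$ times $(X-1)$, read off the corresponding values of $t,s$, and invoke Theorem~\ref{thm:lowerbound} to obtain two elements in $\GalPhi(M)$ of those orders. Since no maximal subgroup of $\Dickson(q)$ contains elements of both orders $q^2 + q + 1$ and $q^2 - q + 1$ for $q$ odd (by Kleidman's classification, cf.\ the list used in the even characteristic case), equality $\GalPhi(M)=\Dickson(q)$ follows. I anticipate small-$q$ exceptions (notably $q=3$) will need a separate specialization, analogous to the $q=4$ case handled above, producing a third element of a suitable prime order so as to exclude the remaining subgroup candidates.

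\textbf{Solving for~$f$.} For part~b) I would write $x = (x_1,\ldots,x_7)^{\mathrm{tr}}$ and expand $D \cdot \phiq(x) = x$ into seven scalar equations. Reading the rows with a single nonzero superdiagonal entry (rows 2, 4, 5, 7) immediately yields
$x_2 = X^q$, $x_5 = X^{q^2}$, $x_4 = -2t X^{q^2} - X^{q^3}$, $x_7 = X^{q^3}$,
where $X \coloneqq x_1$. Row~6 then determines $x_6$ and row~3 determines $x_3$ as explicit $\phiq$-polynomials in $X$ with coefficients in $\Fq(t,s)$. Substituting all six derived expressions into the first row gives one additive equation for $X$ alone.

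\textbf{Eliminating $q$-powers to reach the stated~$f$.} The raw first-row equation will involve powers of $X$ up to $X^{q^5}$ and will be of total degree lower than $q^7$. To reach the polynomial~$f$ of degree~$q^7$ claimed in the statement, I expect to have to raise the first-row equation (or the intermediate equation for $x_3$ or $x_6$) to the $q$-th power once or twice and take $\Fq(t,s)$-linear combinations so as to eliminate the remaining intermediate terms. This bookkeeping step — keeping track of which $q$-power of each $t$ and $s$ lands where, and confirming that the coefficients collapse to the precise shape $t^{q^2-q}+t^{q^3}$, $t^{2q^2-q}-s^{q^2}$, etc.\ given in the statement — is the main obstacle and the only part requiring genuine care. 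Once $f$ has been derived, each coordinate of a solution to $D \cdot \phiq(x) = x$ is polynomial in $X$, so Lemma~\ref{lem:galoisgroup} finishes the proof with $\Gal_{\Fq(t,s)}(f) \cong \GalPhi(M) \cong \Dickson(q)$.
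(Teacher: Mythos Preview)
Your overall strategy matches the paper's, but there are two genuine gaps.

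\textbf{Lower bound.} A palindromic degree-$6$ factor has \emph{three} free coefficients $\beta_i,\gamma_i,\delta_i$, not two, so matching $a_1$ and $a_2$ via $t,s$ is not enough: you must also check that $a_3$ lands on $\delta_i$. In the paper this is the nontrivial computation: one writes out $\beta_i,\gamma_i,\delta_i$ explicitly in terms of $\zeta_i$, observes $\beta_i^2 = (\text{power sums}) + 2\gamma_i$, and deduces the identity $\delta_i = 2\beta_i - 2 + 2\gamma_i - \beta_i^2$. Only because this relation holds for these particular torus elements does the two-parameter cross section specialize correctly. Your proposal treats this as automatic.

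\textbf{Part b).} Your reading of row~4 is wrong. The matrix has its $(-1)$ in position $(4,4)$, not $(4,5)$, so row~4 of $D\cdot\phi_q(x)=x$ reads
\[
-2t\,x_2^{\,q} - x_4^{\,q} = x_4, \qquad\text{i.e.}\qquad x_4 + x_4^{\,q} = 2t\,X^{q^2}.
\]
This is an \emph{implicit} equation in $x_4$; there is no direct formula $x_4 = -2tX^{q^2} - X^{q^3}$ as you wrote. Consequently $x_3$ and the first-row equation both still involve $x_4$ (or $x_4^{\,q}$), and the $q$-power manipulation you anticipated is not cosmetic degree-raising but the actual elimination of $x_4$. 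The paper does this as follows: substitute $x_4^{\,q} = 2tX^{q^2} - x_4$ into the first-row equation to get an equation linear in $x_4$; raise that to the $q$-th power (getting an equation linear in $x_4^{\,q}$); then add $t^{q^2-q}$ times the original first-row equation so that the $x_4^{\,q}$-terms cancel. That combination is exactly what produces the coefficients $t^{q^2-q}+t^{q^3}$, $t^{2q^2-q}-s^{q^2}$, etc., and explains why $f$ has degree $q^7$ rather than $q^6$. Once you correct the row-4 reading, the rest of your plan goes through and Lemma~\ref{lem:galoisgroup} applies (with $x_4$ recovered from the linear-in-$x_4$ equation above).
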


\begin{proof}
a) Theorem~\ref{thm:upperbound} asserts $\Gal^\Phi(M)\leq \Dickson(q)$.
For the lower bound, we first compute the characteristic polynomial $h$ of $D$, obtaining $h(X)=(X-1)(X^6+(t+2)X^5+(2+2t-s)X^4 +(2+2t-2s-t^2)X^3+(2+2t-s)X^2+(t+2)X+1).$
Let $\zeta_1$ and $\zeta_2$ be roots of unity of orders $(q^2+q+1)$ and $(q^2-q+1)$, respectively.
Note that $\zeta_1^{q^3}=\zeta_1$ and $\zeta_2^{q^3}=\zeta_2^{-1}$.
Define 
$h_i(X)=(X-\zeta_i)(X-\zeta_i^q)(X-\zeta_i^{q^2})(X-\zeta_i^{-1})(X-\zeta_i^{-q})(X-\zeta_i^{-q^2}) \in \Fq[X],$ for $i=1,2$.
The inverse of any root of $h_i$ is a root, too, so $h_i$ has the following symmetry by Lemma~\ref{lem:symmetric coefficients}:
$h_i(X)=X^6+\beta_iX^5+\gamma_iX^4+\delta_iX^3+\gamma_iX^2+\beta_iX+1.$
We compute 
\begin{eqnarray*}
  \beta_i  &=& -\zeta_i-\zeta_i^q-\zeta_i^{q^2}-\zeta_i^{-1}-\zeta_i^{-q}-\zeta_i^{-q^2}, \\
  \gamma_i &=&  \zeta_i^{1+q}+\zeta_i^{1+q^2}+\zeta_i^{1-q}+\zeta_i^{1-q^2}+\zeta_i^{q+q^2}+\zeta_i^{q-1}+\zeta_i^{q-q^2}+
                \zeta_i^{q^2-1}+\zeta_i^{q^2-q} \\
           & & +\zeta_i^{-1-q}+\zeta_i^{-1-q^2}+\zeta_i^{-q-q^2}+3, \\
  \delta_i &=& 2\beta_i-2-\zeta_i^2-\zeta_i^{2q}-\zeta_i^{2q^2}-\zeta_i^{-2}-\zeta_i^{-2q}-\zeta_i^{-2q^2}. 
\end{eqnarray*}
Furthermore, $\beta_i^2=\zeta_i^2+\zeta_i^{2q}+\zeta_i^{2q^2}+\zeta_i^{-2}+\zeta_i^{-2q}+\zeta_i^{-2q^2}+2\gamma_i$, so we conclude $\delta_i=2\beta_i-2+2\gamma_i-\beta_i^2.$ Define specializations $\psi_i \colon \Fq[t,s] \to \Fq$, $t\mapsto \beta_i-2$, $s \mapsto 2\beta_i-\gamma_i-2$. 
It follows from the above computations that the characteristic polynomial of $\psi_i(D)$ is $(X-1)h_i(X)$, which is easily seen to be separable.
Applying Theorem~\ref{thm:lowerbound} yields elements $h_1$, $h_2$ contained in $\Gal^\Phi(M)$ such that $h_i$ is conjugate to $\psi_i(D)$.
Therefore, the Jordan canonical form of $h_i$ over $\mathbb{F}_{q^6}$ equals $\diag(\zeta_i,\zeta_i^q,\zeta_i^{q^2},1,\zeta_i^{-q^2},\zeta_i^{-q},\zeta_i^{-1})$ and its order is $q^2 \pm q+1$.
We use the list of maximal subgroups in \cite[Thm.~A]{Kleidman} to see that for $q > 3$, any pair of elements in $\Dickson(q)$ of orders $q^2+q+1$ and $q^2-q+1$ generates $\Dickson(q)$.
The group $\Dickson(3)$ is generated by any such pair and an additional element of order $8$.
The specialization $\psi: \mathbb{F}_3[t,s] \to \mathbb{F}_3, \ t \to 0, s \to 1$ yields such an element.

\medskip\noindent
b) For $x=(x_1,\dots,x_7) \in \overline{\Fq(t,s)}^7$, consider the equation $D\cdot\phi_q(x)=x$ which is equivalent to the following system of equations (with $X=x_1$).
\begin{subequations}
\begin{align}
x_2 &= -X^q, \hspace*{12ex}
x_5  = -X^{q^2}, \hspace*{12ex}
x_7  = -X^{q^3}, \\
x_6 &= tX^{q^3}+X^{q^4}, \hspace*{6ex}
x_4  = 2tX^{q^2}-x_4^q, \label{00071} \\
x_3 &= t^2X^{q^2}-sX^{q^3}+t^qX^{q^4}+X^{q^5}-tx_4^q \nonumber \\
    &= -t^2X^{q^2}-sX^{q^3}+t^qX^{q^4}+X^{q^5}+tx_4, \\
X &= -tX^q+sX^{q^2}-t^{2q}X^{q^3}-s^qX^{q^4}+t^{q^2}X^{q^5}+X^{q^6}+t^qx_4^q. \label{00072}
\end{align}
\end{subequations}

\noindent
Note that all coordinates lie in the field extension generated by $X$ and $x_4$.
Moreover, even $x_4$ lies in the extension generated by $X$.
This can be seen after plugging the expression for~$x_4^q$ obtained from~(\ref{00071}) into~(\ref{00072}):
$-X-tX^q+(s+2t^{q+1})X^{q^2}-t^{2q}X^{q^3}-s^qX^{q^4}+t^{q^2}X^{q^5}+X^{q^6}-t^qx_4=0$.
We raise this equation to its $q$-th power and add it to equation (\ref{00072}) multiplied by $t^{q^2-q}$ to get $f(X)=0$. 
The rest follows from Lemma~\ref{lem:galoisgroup}.
\end{proof}

\begin{rem}
Compare the obtained polynomial with the one given in \cite[Thm~4.3]{Malle}: 
$$\Gal_{\Fq(r,u)}(\tilde f) \cong \Dickson(q),$$ where $\tilde f$ is defined as follows:
\begin{multline}
 \tilde f(X) =
    X^{q^7}
 +  u^{e_1}r^{e_4}(r^{e_6}+1) \cdot X^{q^6}
 - (r^{e_2}u^{e3}+(r^{q^5+q^2}+r^{e_2})u^{e_2}+r^{e_3}+r^{e_2}+1) \cdot X^{q^5}\\
 -  u^{e_1}r^{e_4} \cdot \big(r^{e_5}(u^{q^4+q^3}+u^{e_5})+(r^{e_6}+1)(r^{q^4+q^3}+r^{e_5}+1)\big) \cdot X^{q^4} \\	
 +  r^{e_2} \cdot \big(u^{e_3}+(r^{e_6}+1)(r^{e_6}+r^{q^3-q}+1)u^{e_2}+1 \big) \cdot X^{q^3} \\
 +  u^{e_1}r^{q^5+q^3-2q^2} \cdot \big(u^{q^4+q^3}+(r^{q^2+q}+r^{q^2-1}+1)u^{e_5}+r^{e_6}+1\big) \cdot X^{q^2} \\
 -  u^{e_2}r^{q^5-q}(r^{e_6}+1) \cdot X^q
 -  u^{q^5-q^2}r^{q^5+q^3-q^2-1} \cdot X.
\end{multline}
Here the $e_i$ are defined as $e_1=q^5-q^4, \ e_2=q^5-q^3, \ e_3=q^5+q^4, \ e_4=q^5-q^4+q^3-q^2, \ e_5=q^4-q^2, \ e_6=q^3+q^2.$
\end{rem}

\subsection{The Ree Groups $\Ree(q)$}
We realize the Ree groups of type $\Ree$ over $\Fq(t)$ and obtain polynomials that are specializations of the polynomials for $\Dickson(q)$ derived before.
The groups $\Ree(q)$ only exist for $q=3^{2m+1}$, $m\geq 0$. Let $\G=\Dickson$.
There exists a Frobenius map $F\colon \G \to \G$ such that $F^2=\phi_q$.
The corresponding twisted group $\G^F$ is of type $\Ree$ and there is only one isomorphism class of groups of this type.
The cardinality of $\Ree(q)$ is $q^3(q-1)(q^3+1)=q^3(q-1)(q+1)(q+3^{m+1}+1)(q-3^{m+1}+1).$
We use the seven-dimensional matrix representation of $\Ree(q)$ given in \cite{KLM}.
In this article, the authors use a Frobenius map $F: \Dickson(\Fqbar) \to \Dickson(\Fqbar)$ such that $\Dickson^F\cong {\Ree(q)}$.
They compute the images of all root subgroups of $\Dickson$ under $F$ and give explicit matrix generators.
The same arguments carry over to the case $F: \Dickson(\overline {\Fq(t)}) \to \Dickson(\overline {\Fq(t)})$.
Let $\alpha$ and $\beta$ denote the simple roots of $\Dickson(\overline {\Fq(t)})$.
The root subgroups consist of elements of the following form, where $s$ denotes an element in $\overline {\Fq(t)}$.
\footnotesize
\begin{align*}
x_\alpha(s) \! &= \! \! 
\idblock{\mathbf{2},\mathbf{3},\mathbf{2}}{%
    \begin{pmatrix}
    		1&s\\
    		0&1 
    \end{pmatrix},
    \begin{pmatrix}
    		1&s&-s^2\\
    		0&1&-2s\\
    		0&0&1 
    \end{pmatrix},
    \begin{pmatrix}
    		1&-s\\
    		0&1 
    \end{pmatrix}
     }, \quad
\ x_\beta(s) \! &= \! \!
\idblock{\mathbf{1},\mathbf{2},\mathbf{1},\mathbf{2},\mathbf{1}}{%
    \begin{pmatrix}
    		1&s\\
    		0&1 
    \end{pmatrix},
    \begin{pmatrix}
    		1&-s\\
    		0&1 
    \end{pmatrix}
     },  \\
x_{-\alpha}(s) \! &= \! \! 
\idblock{\mathbf{2},\mathbf{3},\mathbf{2}}{%
    \begin{pmatrix}
    		1&0\\
    		s&1 
    \end{pmatrix},
    \begin{pmatrix}
    		1&0&0\\
    		-s&1&0\\
    		-s^2&-s&1 
    \end{pmatrix},
    \begin{pmatrix}
    		1&0\\
    		-s&1 
    \end{pmatrix}
     },
\ x_{-\beta}(s) \! &= \! \!
\idblock{\mathbf{1},\mathbf{2},\mathbf{1},\mathbf{2},\mathbf{1}}{%
    \begin{pmatrix}
    		1&0\\
    		s&1 
    \end{pmatrix},
    \begin{pmatrix}
    		1&0\\
    		-s&1 
    \end{pmatrix}
     }.
\end{align*}
\normalsize
The Frobenius map $F$ permutes the root subgroups as follows.
$$F(x_\alpha(s))=x_\beta(s^{3^{m+1}}), \ F(x_\beta(s))=x_\alpha(s^{3^{m}}),$$ $$F(x_{\text{-}\alpha}(s))=x_{\text{-}\beta}(s^{3^{m+1}}), \ F(x_{\text{-}\beta}(s))=x_{\text{-}\alpha}(s^{3^{m}}).$$

\noindent Therefore, $F^2=\phi_{3^{2m+1}}=\phi_q$, so $F$ is indeed a Frobenius map. We compute the corresponding reflections as $w_\alpha=x_\alpha(1)x_{-\alpha}(-1)x_\alpha(1)$ and $w_\beta=F(w_\alpha)=x_\beta(1)x_{-\beta}(-1)x_\beta(1)$:
\footnotesize 
$$w_\alpha \! = \! \!
\idblock{\mathbf{2},\mathbf{3},\mathbf{2}}{%
    \begin{pmatrix}
    		0&1\\
    		-1&0 
    \end{pmatrix},
    \begin{pmatrix}
    		0&0&-1\\
    		0&-1&0\\
    		-1&0&0
    \end{pmatrix},
    \begin{pmatrix}
    		0&-1\\
    		1&0
    \end{pmatrix}
     },
  w_\beta \! = \! \!
\idblock{\mathbf{1},\mathbf{2},\mathbf{1},\mathbf{2},\mathbf{1}}{%
    \begin{pmatrix}
    		0&1\\
    		-1&0 
    \end{pmatrix},
    \begin{pmatrix}
    		0&-1\\
    		1&0
    \end{pmatrix}
     }.$$
\normalsize
Define $\zwmat=x_\alpha(t)w_\alpha$ and $\widetilde{D}=\zwmat \cdot F(\zwmat) = x_\alpha(t) \; w_\alpha \; x_\beta(t^{3^{m+1}}) \; w_\beta$.
Note that $\widetilde{D}$ is contained in the
Steinberg cross section $X_\alpha w_\alpha X_\beta w_\beta$ of $\Dickson$.
Moreover, we defined $D=x_\alpha(t)w_\alpha x_\beta(s)w_\beta$ in the previous section.
Therefore, $D$ transforms to $\widetilde{D}$ under the specialization $\Fq[t,s]\to \Fq[t]$, $s\mapsto t^{3^{m+1}}$.
We conclude
$$\widetilde{D}=
\begin{pmatrix}
	\text{-}t&\text{-}t^{3^{m+1}}&1&0&0&0&0 \\
	\text{-}1&0&0&0&0&0&0 \\
	0&\text{-}t^2&0&\text{-}t&t^{3^{m+1}}&1&0 \\
	0&t&0&\text{-}1&0&0&0 \\
	0&1&0&0&0&0&0 \\
	0&0&0&0&\text{-}t&0&\text{-}1 \\
	0&0&0&0&1&0&0 \\
\end{pmatrix}. $$

\begin{thm}
For $q=3^{2m+1}, m\geq 0$, let $(M,\Phi)$ be a $7$-dimensional Frobenius module over $(\Fq(t),\phi_q)$ such that the representing matrix of $\Phi$ equals $\widetilde{D}$ with respect to some basis of $M$.
Then
\begin{enumerate}
  \item $\Gal^\Phi(M)\cong {\Ree(q)}$.
  \item The solution field of $M$ is generated by the roots of the additive polynomial
    \begin{eqnarray*} 
      f(X) &=&    X^{q^7}+(t^{q^2-q}+t^{q^3})X^{q^6}+(t^{2q^2-q}-t^{3^{m+1}q^2})X^{q^5} \\
           & & +(-t^{3^{m+1}q+q^2-q}-t^{2q^2})X^{q^4}+(t^{q^2+q}+t^{3^{m+1}q})X^{q^3} \\
           & & +(t^{3^{m+1}+q^2-q}-t^q)X^{q^2}+(-t^{q^2-q+1}-1)X^q-t^{q^2-q}X.
    \end{eqnarray*}
    In particular, $\Gal_{\Fq(t)}(f) \cong {\Ree(q)}$.
\end{enumerate}
\end{thm}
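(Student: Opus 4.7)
The argument follows the template established for the other twisted groups. First, by construction $\widetilde{D} = \zwmat \cdot F(\zwmat)$ with $\zwmat = x_\alpha(t) \, w_\alpha \in \G(\Fq(t))$ and $F^2 = \phiq$, so Theorem~\ref{thm:twupperbound} immediately gives the upper bound $\GalPhi(M) \leq \G^F \cong \Ree(q)$. The key observation for everything that follows is that $\widetilde{D}$ is obtained from the matrix $D$ of Theorem~\ref{thm:polynomg2oddq} by the specialization $s \mapsto t^{3^{m+1}}$; indeed $x_\beta(t^{3^{m+1}}) = F(x_\alpha(t))$ and the remaining factors are unchanged.

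For the lower bound I intend to produce, via Theorem~\ref{thm:lowerbound}, two conjugates of specializations of $\widetilde D$ of orders $q + 3^{m+1} + 1$ and $q - 3^{m+1} + 1$ respectively, noting that these two numbers are the orders of the two twisted maximal tori of $\Ree(q)$ whose product is $q^3+1$. The characteristic polynomial $\charpoly{\widetilde D}$ arises from the polynomial $\charpoly{D}$ computed in the proof of Theorem~\ref{thm:polynomg2oddq} under the same substitution, so it factors as $(X-1) \cdot \tilde h(t; X)$ with $\tilde h$ a palindromic degree-$6$ polynomial in $X$ whose coefficients depend only on $t$. For a primitive root of unity $\zeta_i$ of the appropriate order one then checks — using $\zeta_i^{q^3} = \zeta_i^{\pm 1}$ and $\zeta_i^{3^{m+1}} = \zeta_i^{\pm(q+1)/\cdot}$ — that the minimal polynomial of $\zeta_i$ over $\Fq$ already satisfies the relation $\gamma_i = \beta_i^{3^{m+1}} - \text{(correction)}$ between the two coefficients that collapse when $s = t^{3^{m+1}}$. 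Thus there exist $a_i \in \Fq^\times$ with $\widetilde D_{a_i}$ having characteristic polynomial $(X-1)\cdot h_i(X)$ separable and carrying the desired eigenvalue. Applying Theorem~\ref{thm:lowerbound} gives elements of the required orders, and for $q > 3$ Kleidman's list of maximal subgroups of $\Ree(q)$ (cf.~\cite{Kleidman}) shows that any pair of such elements generates $\Ree(q)$. The small case $q = 3$ is dispatched by an additional specialization producing an element of order incompatible with all proper overgroups, mirroring the argument at the end of the proofs of the Suzuki and Dickson cases.

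For part~b, because the matrix~$\widetilde D$ is the image of the matrix $D$ of Theorem~\ref{thm:polynomg2oddq} under $s \mapsto t^{3^{m+1}}$, the elimination performed row-by-row in that proof transports verbatim: writing $X = x_1$, the entries $x_2, x_5, x_7, x_6, x_3$ are read off successively, while $x_4$ is eliminated by raising the equation for $x_4$ to the $q$-th power and combining with the first-row equation as before. The resulting univariate additive equation in $X$ is precisely the polynomial $f$ given in the statement, and the formulas also show that every coordinate of a solution is polynomially determined by $X$. Lemma~\ref{lem:galoisgroup} then yields $\Gal_{\Fq(t)}(f) \cong \GalPhi(M) \cong \Ree(q)$.

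The main obstacle is the one-parameter lower-bound step: unlike in the proof of Theorem~\ref{thm:polynomg2oddq}, where two indeterminates $t,s$ allowed us to match any palindromic characteristic polynomial of the right shape, here the coefficients of $\tilde h(t;X)$ all depend on the single variable $t$, so the constraint $s = t^{3^{m+1}}$ must be compatible with the $\phiq$-orbit structure of the eigenvalues of a regular semisimple element lying in a Singer-type maximal torus of $\Ree(q)$. Verifying this compatibility is a direct but careful computation using the action of $F$ on the weights of the $7$-dimensional representation of $\Dickson$; once established, the remainder of the argument is routine.
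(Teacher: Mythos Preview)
Your plan is correct and follows the paper's proof essentially verbatim: upper bound via Theorem~\ref{thm:twupperbound}, lower bound by specializing the characteristic polynomial (obtained from that of $D$ under $s\mapsto t^{3^{m+1}}$) to match the minimal polynomials of primitive $(q\pm 3^{m+1}+1)$-th roots of unity, then invoking Kleidman's list (with an ad hoc extra specialization at $q=3$), and part~(b) by transporting the elimination of Theorem~\ref{thm:polynomg2oddq} through the same substitution and applying Lemma~\ref{lem:galoisgroup}. A couple of minor inaccuracies to clean up when you write it out: the product $(q+3^{m+1}+1)(q-3^{m+1}+1)$ equals $q^2-q+1$, not $q^3+1$; the precise relations the paper uses are $\gamma_i=-\beta_i^{3^{m+1}}-\beta_i$ and $\delta_i=1+\beta_i^{3^{m+1}}-\beta_i^2$, and the explicit specialization is $t\mapsto \beta_i+1$.
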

\begin{proof}
a) Theorem~\ref{thm:twupperbound} yields $\Gal^\Phi(M) \leq {\Ree(q)}$ so we only need to verify the lower bound.
If $q=3$, define specializations $\psi_i \colon \mathbb{F}_3[t] \to \mathbb{F}_3$, $t \mapsto i$ for $i=0,1,-1$.
Then $\psi_0(D)$ has order $6$, $\psi_1(D)$ has order $9$ and $\psi_{-1}(D)$ has order $7$.
By Theorem~\ref{thm:lowerbound}, $\Gal^\Phi(M)$ contains elements of order $6, \ 7$ and $9$.
Thus $\Gal^\Phi(M) \cong {\Ree(3)}$, by the list of maximal subgroups in $\Ree(q)$ given in \cite[Thm.~C]{Kleidman}.
Using the same reference for $q>3$, we conclude that it suffices to show that there are specializations $\psi_i: \Fq[t] \to \Fq$, $i=1,2$ such that $\psi_1(D)$ and $\psi_2(D)$ are of orders $q+3^{m+1}+1$ and $q-3^{m+1}+1$, respectively.
We first compute the characteristic polynomial of $\widetilde{D}$.
It equals the characteristic polynomial of $D$ under the specialization $s \mapsto t^{3^{m+1}}$: $h(X)=(X-1)(X^6+(t-1)X^5+(-1-t-t^{3^{m+1}})X^4 +(-1-t+t^{3^{m+1}}-t^2)X^3+(-1-t-t^{3^{m+1}})X^2+(t-1)X+1).$
Let $\zeta_1$ and $\zeta_2$ be roots of unity of order $q+3^{m+1}+1$ and $q-3^{m+1}+1$, respectively. 
Note that $\zeta_i^{q^3}=\zeta_i^{-1}$, since $(q+3^{m+1}+1)(q-3^{m+1}+1)(q+1)=q^3+1$.
Therefore, the minimal polynomial $f_i$ of $\zeta_i$ is $h_i(X)=(X-\zeta_i)(X-\zeta_i^q)(X-\zeta_i^{q^2})(X-\zeta_i^{-1})(X-\zeta_i^{-q})(X-\zeta_i^{-q^2}) \in \Fq[X],$ for $i=1,2$.
The inverse of any root of $h_i$ is a root, too, so $h_i$ has the following symmetry by Lemma~\ref{lem:symmetric coefficients}.
$$h_i(X)=X^6+\beta_iX^5+\gamma_iX^4+\delta_iX^3+\gamma_iX^2+\beta_iX+1.$$ 
Using $\zeta_i^{q^2-q+1}=1$, we compute 
\begin{eqnarray*}
 \beta_i  &=& -\zeta_i-\zeta_i^q-\zeta_i^{q^2}-\zeta_i^{-1}-\zeta_i^{-q}-\zeta_i^{-q^2}, \\
 \gamma_i &=& -\beta_i+\zeta_i^{1+q}+\zeta_i^{1-q^2}+\zeta_i^{q+q^2}+\zeta_i^{q^2-1}+\zeta_i^{-1-q}+\zeta_i^{-q-q^2}, \\
 \delta_i &=& -\beta_i+1-\zeta_i^2-\zeta_i^{2q}-\zeta_i^{2q^2}-\zeta_i^{-2}-\zeta_i^{-2q}-\zeta_i^{-2q^2}. 
\end{eqnarray*}
Furthermore, $\beta_i^2=\zeta_i^2+\zeta_i^{2q}+\zeta_i^{2q^2}+\zeta_i^{-2}+\zeta_i^{-2q}+\zeta_i^{-2q^2}+2\gamma_i$, so we conclude $\delta_i=-\beta_i+1+2\gamma_i-\beta_i^2.$
Next we compute $\beta_i^{3^{m+1}}$ using $\zeta_i^{3^{m+1}}=\zeta_i^{\mp(q+1)}$.
\begin{eqnarray*} 
\beta_i^{3^{m+1}}&=&-\zeta_i^{-(q+1)}-\zeta_i^{- (q^2+q)}-\zeta_i^{1-q^2}-\zeta_i^{(q+1)}-\zeta_i^{(q^2+q)}-\zeta_i^{(-1+q^2)}. 
\end{eqnarray*}
We conclude $$\gamma_i = -\beta_i^{3^{m+1}}-\beta_i, \qquad \delta_i = 1+\beta_i^{3^{m+1}} -\beta_i^2.$$
Setting $t=\beta_i+1$, we obtain $-t^{3^{m+1}}-t-1=-\beta_i^{3^{m+1}}-\beta_i=\gamma_i$.
Also, $t^{3^{m+1}}-t^2-t-1=\beta_i^{3^{m+1}}-\beta_i^2+1 = \delta_i$.
Define specializations $\psi_i:\Fq[t] \to \Fq$, $t \mapsto \beta_i+1$, $i=1,2$.
We just showed that the characteristic polynomial $h^{\psi_i}$ of $\psi_i(D)$ equals $(X-1)h_i(X)$.
Therefore, the Jordan canonical form of $\psi_i(D)$ over $\mathbb{F}_{q^6}$ is $\diag(\zeta_i,\zeta_i^q,\zeta_i^{q^2},1,\zeta_i^{-q^2},\zeta_i^{-q},\zeta_i^{-1})$ and $\psi_i(D)$ has the desired order.  

\medskip\noindent
b) We solve the equation $\widetilde{D} \cdot \phi_q(x)=x$, for $x \in \overline{\Fq(t)}^7$.
Since $\widetilde{D}=\varphi(D)$, where $\varphi:\Fq[t,s]\to \Fq[t]$, $s \mapsto t^{3^{m+1}}$, we obtain the polynomial $\tilde f^\varphi$.
Replacing $s$ by $t^{3^{m+1}}$ in Theorem~\ref{thm:polynomg2oddq}, we obtain the given polynomial which has the correct Galois group by Lemma~\ref{lem:galoisgroup}.
\end{proof}

\begin{rem}
By construction, there exists a polynomial $\tilde f \in \Fq[t,s][X]$ with $\Gal_{\Fq(t,s)}(\tilde f)\cong \Dickson(q)$  such that $f=\tilde f^\varphi$, where $\varphi$ is the specialization homomorphism $\Fq[t,s]\to \Fq[t]$, $s \mapsto t^{3^{m+1}}$.
This might be an indication that $f$ is possibly a generic polynomial for $\Dickson(q)$.
\end{rem}

\subsection{Steinberg's Triality Group $\Triality(q)$}

Let $\G=\operatorname{PSO}_8$ be the projective orthogonal group over $\overline{\Fq(t,s)}$ for an odd $q$ and let $X_1$, $X_2$, $X_3$, and $X_4$ be the projection of the root subgroups of $\SO_8$.
There exists a morphism of linear algebraic groups $\gamma: \operatorname{PSO}_8 \to \operatorname{PSO}_8$ which fixes $X_2$ and permutes $X_1$, $X_3$ and $X_4$.
With $F \coloneqq \gamma \circ \phi_q$, we have $\Triality(q) = \G^F \leq \operatorname{PSO}_8(\F_{q^3})$.
Let $\proj{A} \in \operatorname{PSO}_8$ denote the projection of an element $A \in \SO_8$.
We use the reflections $w_i=x_i(1)x_{-i}(-1)x_i(1)$, where $x_{i}$ denotes the generic element in the root subgroup $X_i$ and where $x_{-i}$ belongs to the negative root of~$x_i$.
Define $\zwmat = x_1(t)w_1x_2(s)w_2 \in \SO_8(\F_{q^3}(t,s))$.
Then
$$F(\proj{\zwmat})=\proj{x_3(t^q)w_3x_2(s^q)w_2},\quad F^2(\proj{\zwmat})=\proj{x_4(t^{q^2})w_4x_2(s^{q^2})w_2}.$$
We thus define $D=x_1(t)w_1\, x_2(s)w_2\cdot x_3(t^q)w_3\, x_2(s^q)w_2\cdot x_4(t^{q^2}) w_4\, x_2(s^{q^2})w_2$ and compute
\footnotesize $$D=\begin{pmatrix}
-t&(-s^{1+q+q^2}-s^{q^2}t^q-t^{q^2}s)&(s^{1+q}+t^q)&-t^{q^2}&-s&s^{q^2}&1&0 \\
-1&0&0&0&0&0&0&0 \\
0&(-s^{q+q^2}-t^{q^2})&s^q&0&-1&0&0&0\\
0&-s^{q^2}&1&0&0&0&0&0 \\
0&-t^q&0&s^q&0&1&0&0 \\
0&-s&0&-1&0&0&0&0 \\
0&-t&0&0&0&0&0&-1 \\
0&1&0&0&0&0&0&0 \\
\end{pmatrix}.$$
\normalsize

\noindent
The projective orthogonal group $\operatorname{PSO}_8$ becomes a linear algebraic group via the natural embedding
$\phi: \operatorname{PSO}_8(\overline{\Fq(t,s)}) \to \GL_{64}(\overline{\Fq(t,s)}).$
Let $(M,\Phi)$ be the $64$-dimensional Frobenius module over $(\F_{q^3},\phi_{q^3})$ such that the representing matrix of $\Phi$ with respect to a fixed basis of $M$ equals $\phi(\proj{D})$. By construction of $D$ and Theorem~\ref{thm:twupperbound} we have $\GalPhi(M) \leq {\Triality(q)}$.

\bigskip
\noindent
For the lower bound, we need to find specializations of $D$ such that any of their conjugates generate $\Triality(q)$,
so we consider the maximal tori of $\Triality(q)$. These have been determined by Gager in \cite{Gager} and can also be found in \cite{DM}. Each of them is conjugate to a subgroup of the diagonal torus by some $g_i \in \operatorname{PSO}_8(\Fqbar)$.
For three of the seven maximal tori we fix an element $t_i$:
\begin{eqnarray*}
t_1&=&\diag(\alpha_1,\alpha_1^{-1},\alpha_1^{-1} ,\alpha_1^{-2q-1},\alpha_1^{2q+1},\alpha_1,\alpha_1,\alpha_1^{-1})^{g_1}, \\
t_2&=&\diag(\alpha_2,\alpha_2^{-1},\alpha_2^{-1} ,\alpha_2^{2q-1},\alpha_2^{-2q+1},\alpha_2,\alpha_2,\alpha_2^{-1})^{g_2}, \\
t_3&=&\diag(1,\alpha_1,\alpha_1^{q+1},\alpha_1^{q},\alpha_1^{-q},\alpha_1^{-q-1}, \alpha_1^{-1},1)^{g_3}, 
\end{eqnarray*}
Here, $\alpha_1$ and $\alpha_2$ are roots of unity of orders $q^2+q+1$ and $q^2-q+1$, respectively.
From the list of maximal subgroups given in \cite{Kleidman2} it is easy to see that any set $\{x_1, x_2, x_3 \}$ of elements in $\Triality(q)$ such that $x_i$ and $t_i$ have the same order ($i=1,2,3$) generates $\Triality(q)$.
Let $f_i$ denote the characteristic polynomial of $t_i$ ($i \leq 3$) and let $f$ be the characteristic polynomial of $D$.
We compute
\begin{align*}
f(X) = X^8
 &+ (t-s^q) \cdot X^7
  + (-st^{q^2}-s^qt-s^{q^2}t^q-s^{1+q+q^2}) \cdot X^6 \\
 &+ (-t-t^{q+q^2}-s^{1+q^2}+s^q+st^q+s^{q^2}t^{q^2}) \cdot X^5 \\
 &+ (-2+2s^qt+s^2+s^{2q^2}+2s^{1+q+q^2}+t^{2q}+t^{2q^2}) \cdot X^4 \\
 &+ (-t-t^{q+q^2}-s^{1+q^2}+s^q+st^q+s^{q^2}t^{q^2}) \cdot X^3 \\
 &+ (-st^{q^2}-s^qt-s^{q^2}t^q-s^{1+q+q^2}) \cdot X^2+(t-s^q) \cdot X + 1.
\end{align*}
Note that the coefficients of $f$ are symmetric.
We would like to find sufficient conditions for $f$ to specialize to $f_i$.
Therefore, we take a closer look at $f_i$, $i \leq 3$.
We denote the coefficients of $f_i$, which are contained in $\F_{q^3}$, as follows:
$$f_i(X)=X^8+\beta_iX^7+\gamma_iX^6+\delta_iX^5+\epsilon_i X^4+\delta_iX^3+\gamma_i X^2+\beta_iX+1.$$

\begin{prop} \label{identitaetencharpoly}
For all $i \leq 3$ we have $\gamma_i \in \Fq$. Moreover, the following identities hold: $\delta_i=-\beta_i^{q^2+q}-\beta_i, \ \epsilon_i=\beta_i^{2q}+\beta_i^{2q^2}-2\gamma_i-2$.
\end{prop}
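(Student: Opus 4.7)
The plan is to verify each identity by direct calculation with the diagonalized form of $t_i$, since the coefficients $\beta_i,\gamma_i,\delta_i,\epsilon_i$ are (up to sign) the elementary symmetric polynomials in the eigenvalues and are therefore conjugation-invariant.

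I would first treat $i=3$, which is cleanest. The eigenvalue multiset consists of $1$ with multiplicity two together with the three inverse-closed pairs $\{\alpha_1^{\pm 1}\}$, $\{\alpha_1^{\pm q}\}$, $\{\alpha_1^{\pm(q+1)}\}$. Setting $A=\alpha_1+\alpha_1^{-1}$, $B=\alpha_1^q+\alpha_1^{-q}$, $C=\alpha_1^{q+1}+\alpha_1^{-q-1}$, the characteristic polynomial factors as $(X-1)^2(X^2-AX+1)(X^2-BX+1)(X^2-CX+1)$, which expands to give closed expressions for $\beta_3,\gamma_3,\delta_3,\epsilon_3$ in terms of the elementary symmetric polynomials of $(A,B,C)$. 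Using $\alpha_1^{q^2}=\alpha_1^{-q-1}$, the Frobenius $\phi_q$ cyclically permutes $(A,B,C)$; hence $A+B+C$, $AB+BC+CA$, and $ABC$ all lie in $\Fq$, and $\gamma_3\in\Fq$ is immediate. Both remaining identities then reduce, after routine algebra, to the single relation $A^2+B^2+C^2 - ABC = 4$, which I would verify by expanding $ABC$ and observing that its monomial content coincides with that of $A^2+B^2+C^2$ up to the constant $4$.

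For $i=1,2$ the approach is analogous, but the eigenvalue multiset now has $\{\alpha_j^{\pm 1}\}$ with multiplicity three and a further inverse-closed pair $\{\alpha_j^{\pm(2q+\varepsilon)}\}$ (with $\varepsilon=1,\,j=1$ for $t_1$ and $\varepsilon=-1,\,j=2$ for $t_2$). Thus $f_i(X) = (X-\alpha_j)^3(X-\alpha_j^{-1})^3(X^2 - D_jX + 1)$ with $D_j = \alpha_j^{2q+\varepsilon}+\alpha_j^{-2q-\varepsilon}$, and expansion yields $\beta_i, \gamma_i, \delta_i, \epsilon_i$ as explicit sums of powers of $\alpha_j^{\pm 1}$. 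Using $\alpha_1^{q^3}=\alpha_1$ for $i=1$ and $\alpha_2^{q^3}=\alpha_2^{-1}$ for $i=2$, I compute $\beta_i^q$ and $\beta_i^{q^2}$; the statement $\gamma_i\in\Fq$ reduces to checking that each $\phi_q$-orbit on the exponents of $\alpha_j$ appearing in $\gamma_i$ has size three. The identities $\delta_i = -\beta_i^{q^2+q}-\beta_i$ and $\epsilon_i = \beta_i^{2q}+\beta_i^{2q^2}-2\gamma_i - 2$ then follow by expanding the products $\beta_i^q\cdot\beta_i^{q^2}$ and the squares $\beta_i^{2q}, \beta_i^{2q^2}$ and matching monomials against the direct expansions of $\delta_i$ and $\epsilon_i$.

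The main obstacle will be the monomial bookkeeping for $i=1,2$: the products $\beta_i^{q^2+q}$ and the sums $\beta_i^{2q}+\beta_i^{2q^2}$ produce a large number of cross-terms $\alpha_j^{\pm a}$, which must be grouped under the $\phi_q$-action and identified with the corresponding orbit-sums inside $\delta_i$ and $\epsilon_i$. No new conceptual input is required beyond the multiplicative orders of $\alpha_1$ and $\alpha_2$ and the resulting exponent arithmetic modulo $q^2\pm q + 1$.
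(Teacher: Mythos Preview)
Your approach is correct and is precisely what the paper intends: the paper's own proof reads ``This follows from straight-forward, though lengthy, computations that are very similar to those in the last two sections. We omit them here.'' Your proposal spells out exactly those computations --- expanding the characteristic polynomial from the explicit diagonalizations of $t_1,t_2,t_3$ and tracking the $\phi_q$-action on the eigenvalues --- so it matches the paper's (omitted) argument. Your reduction of the $i=3$ case to the single identity $A^2+B^2+C^2-ABC=4$ is a clean way to organize the calculation; for $i=1,2$ the bookkeeping is indeed heavier (as you note), but no additional idea is needed.
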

\begin{proof}
This follows from straight-forward, though lengthy, computations that are very similar to those in the last two sections.
We omit them here.
\end{proof}

\begin{cor} \label{polyspez3d4}
Assume there exists an $s_i \in \mathbb F_{q^3}$ with $$\operatorname{T}(s_i(s_i+\beta_i^{q^2}))+\operatorname{N}(s_i)=-\gamma_i$$ for an $i \leq 3$, where $\operatorname{T}(a)=a+a^q+a^{q^2}$ and $\operatorname{N}(a)=a^{1+q+q^2}$ denote the trace and norm map $\F_{q^3}\to \Fq$. Consider the specialization $\psi_i: \F_{q^3}(t,s) \to \F_{q^3}$, $t \mapsto \beta_i+s_i^q$, $s \mapsto s_i$. Then $f^{\psi_i}=f_i$.
\end{cor}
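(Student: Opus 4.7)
The plan is a direct substitution argument exploiting the symmetry of both characteristic polynomials. Since $f$ and $f_i$ are both palindromic of degree~$8$, it suffices to compare the four distinct coefficients. The specialization is engineered so that the coefficient of~$X^7$ comes out right trivially:
\[
  \psi_i(t - s^q) \;=\; (\beta_i + s_i^q) - s_i^q \;=\; \beta_i.
\]
Note also that, since $s_i \in \F_{q^3}$, we have $s_i^{q^3}=s_i$, and $\beta_i \in \F_{q^3}$ by Proposition~\ref{identitaetencharpoly}, so that $\psi_i(t^{q^j}) = \beta_i^{q^j} + s_i^{q^{j+1}}$ cycles through the three Galois conjugates of $s_i$ as $j$ ranges over $\{0,1,2\}$. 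This is the key structural fact that will make the trace and norm symbols appear.

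For the coefficient of~$X^6$, I would substitute term by term and collect:
\begin{align*}
  \psi_i\bigl(-s t^{q^2} - s^q t - s^{q^2} t^q - s^{1+q+q^2}\bigr)
    &= -\operatorname{T}\!\bigl(s_i\,\beta_i^{q^2}\bigr) - \operatorname{T}\!\bigl(s_i^{\,2}\bigr) - \operatorname{N}(s_i) \\
    &= -\bigl(\operatorname{T}(s_i(s_i+\beta_i^{q^2})) + \operatorname{N}(s_i)\bigr) \;=\; \gamma_i,
\end{align*}
where the last equality is exactly the hypothesis on~$s_i$. For the coefficient of~$X^5$, direct expansion of each summand in the polynomial for $f$ given above will produce three cancelling pairs of mixed $s$-$\beta$ terms plus a quadruple $-s_i^{1+q^2}-s_i^{1+q^2}+s_i^{1+q^2}+s_i^{1+q^2}=0$, leaving only $-\beta_i - \beta_i^{q+q^2}$, which is~$\delta_i$ by Proposition~\ref{identitaetencharpoly}.

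Finally, for the coefficient of~$X^4$ the substitution produces
\[
  -2 + \beta_i^{2q} + \beta_i^{2q^2} + 2\operatorname{T}(s_i^{\,2}) + 2\operatorname{T}(s_i\,\beta_i^{q^2}) + 2\operatorname{N}(s_i),
\]
and applying the hypothesis on~$s_i$ once more replaces the three trace/norm terms by $-2\gamma_i$, giving $-2 + \beta_i^{2q} + \beta_i^{2q^2} - 2\gamma_i = \epsilon_i$ by Proposition~\ref{identitaetencharpoly}. Thus $f^{\psi_i}$ and $f_i$ agree in all coefficients. The proof is essentially just bookkeeping; the only conceptual step is spotting that the choice $t \mapsto \beta_i + s_i^q$ is forced by the coefficient of~$X^7$ and that the remaining three identities then reduce to the single trace/norm equation stipulated in the hypothesis, together with the two relations of Proposition~\ref{identitaetencharpoly}. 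No serious obstacle is expected; the main care required is tracking which powers of $s_i$ cancel in the $X^5$-coefficient computation.
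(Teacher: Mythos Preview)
Your proposal is correct and follows essentially the same approach as the paper: direct substitution into each of the four distinct coefficients of the palindromic polynomial, invoking the hypothesis for the $X^6$- and $X^4$-coefficients and Proposition~\ref{identitaetencharpoly} for the $X^5$- and $X^4$-coefficients. One tiny nit: the fact $\beta_i \in \F_{q^3}$ is not part of Proposition~\ref{identitaetencharpoly} but is stated just before it (the $f_i$ have coefficients in $\F_{q^3}$); otherwise the argument matches the paper's almost line for line.
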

\begin{proof}
The coefficient at position $X^7$ of $f$ is $t-s^q$ and thus specializes to~$\beta_i$.
The next coefficient is 
$(-st^{q^2}-s^qt-s^{q^2}t^q-s^{1+q+q^2})$ which specializes to $-\operatorname{T}(s_i(s_i+\beta_i^{q^2}))-\operatorname{N}(s_i)=\gamma_i$.
The coefficient at position $X^5$ is $(-t-t^{q+q^2}-s^{1+q^2}+s^q+st^q+s^{q^2}t^{q^2})$ which specializes to 
$-\beta_i-s_i^q-(\beta_i^q+s_i^{q^2})(\beta_i^{q^2}+s_i)-s_i^{1+q^2}+s_i^q+s_i(\beta_i^q+s_i^{q^2})+s_i^{q^2}(\beta_i^{q^2}+s_i)=-\beta_i-\beta_i^{q+q^2}$.
The latter equals $\delta_i$, by Proposition~\ref{identitaetencharpoly}.
Finally the middle coefficient of $f$ is $(-2+2s^qt+s^2+s^{2q^2}+2s^{1+q+q^2}+t^{2q}+t^{2q^2})$ and hence specializes to 
$-2+2s_i^q\beta_i+2s_i^{2q}+s_i^2+s_i^{2q^2}+2s_i^{1+q+q^2}+\beta^{2q}+s_i^{2q^2}+2\beta^qs_i^{q^2}+\beta^{2q^2}+s_i^2+2\beta^{q^2}s_i=2\operatorname{T}(s_i(s_i+\beta_i^{q^2}))+2s_i^{1+q+q^2}+\beta^{2q}+\beta^{2q^2}-2=-2\gamma+\beta^{2q}+\beta^{2q^2}-2=\epsilon_i$, where the last identity again follows from Proposition~\ref{identitaetencharpoly}.
\end{proof}

\begin{prop}
The representing matrix $D$ specializes to elements with characteristic polynomial $f_1$ and $f_2$. 
\end {prop}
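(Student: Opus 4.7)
The plan is to apply Corollary~\ref{polyspez3d4} to each of $i=1,2$ separately. By that corollary it suffices to exhibit, for each such $i$, an element $s_i \in \mathbb{F}_{q^3}$ satisfying
$$\operatorname{T}\!\big(s_i(s_i+\beta_i^{q^2})\big)+\operatorname{N}(s_i)=-\gamma_i.$$
Once such $s_i$ is produced, Corollary~\ref{polyspez3d4} guarantees that the specialization $\psi_i \colon t \mapsto \beta_i + s_i^q$, $s \mapsto s_i$ sends the generic characteristic polynomial $f$ to $f_i$, which is exactly what we need.

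First I would rewrite the target equation as
$$\operatorname{T}(s^2) \;+\; \operatorname{T}(\beta_i^{q^2} s) \;+\; \operatorname{N}(s) \;+\; \gamma_i \;=\; 0,$$
noting that every summand lies in $\mathbb{F}_q$ (the first two because $\operatorname{T}$ lands in $\mathbb{F}_q$, the third by definition of the norm, the fourth by Proposition~\ref{identitaetencharpoly}). Thus we are looking for a preimage under the $\mathbb{F}_q$-valued map $\Psi_\beta\colon \mathbb{F}_{q^3} \to \mathbb{F}_q$, $s \mapsto \operatorname{T}(s(s+\beta^{q^2}))+\operatorname{N}(s)$, when $\beta = \beta_i$.

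The key input is Proposition~\ref{lsgi12} of P.~M\"uller (see the Acknowledgments), which is precisely tailored to solve an equation of this shape for $\beta$ and $\gamma$ in the range arising from the tori $t_1$, $t_2$ of $\Triality(q)$. I would plug in the explicit expressions
$$\beta_1 = -\alpha_1 - \alpha_1^{-1} - \alpha_1^{-(2q+1)} - \alpha_1^{2q+1} - \cdots, \qquad \beta_2 = \text{analogous with } \alpha_2,$$
together with $\gamma_i \in \mathbb{F}_q$ computed from the diagonalization of $t_i$, and verify the hypotheses of Proposition~\ref{lsgi12}. This yields the required $s_1$, $s_2 \in \mathbb{F}_{q^3}$.

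The main obstacle is precisely the solvability question — namely, showing that $-\gamma_i$ lies in the image of $\Psi_{\beta_i}$ for $i=1,2$. This is a nontrivial polynomial equation in three $\mathbb{F}_q$-variables (the coordinates of $s_i$ in an $\mathbb{F}_q$-basis of $\mathbb{F}_{q^3}$), mixing a norm of degree $1+q+q^2$ with quadratic trace terms, and its uniform solvability over all $q$ (odd prime power) is exactly the content outsourced to Proposition~\ref{lsgi12}. Once that proposition is invoked, the present proof reduces to a mechanical specialization check.
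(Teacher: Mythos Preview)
There is a genuine gap in your plan. You want to apply Proposition~\ref{lsgi12} to produce $s_1,s_2$, but that proposition has the hypothesis $\beta \in \Fq$ (and $\gamma \in \Fq$). While $\gamma_i \in \Fq$ for all $i$ by Proposition~\ref{identitaetencharpoly}, the elements $\beta_1,\beta_2$ do \emph{not} lie in $\Fq$ in general. Indeed, from the diagonal form of $t_1$ one computes
\[
\beta_1 = -\bigl(3\alpha_1 + 3\alpha_1^{-1} + \alpha_1^{2q+1} + \alpha_1^{-(2q+1)}\bigr),
\]
where $\alpha_1$ has order $q^2+q+1$; since $\alpha_1 \in \F_{q^3}\setminus\Fq$, one checks that $\beta_1^q \neq \beta_1$ (e.g.\ $(2q+1)q \equiv -(q+2) \pmod{q^2+q+1}$, so the exponents do not match up). The same obstruction holds for $\beta_2$. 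Proposition~\ref{lsgi12} is tailored to the case $i=3$, where $\beta_3 = -2 - \operatorname{T}(\alpha_1+\alpha_1^{-1})$ visibly lies in $\Fq$; it is not applicable to $i=1,2$.

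The paper's argument avoids this issue entirely: instead of an abstract existence statement, it writes down the explicit candidate $s_i = \alpha_i^{q^2} + \alpha_i^{-q^2} \in \F_{q^3}$ and checks by direct calculation (using the relations $\alpha_1^{q^3}=\alpha_1$, $\alpha_2^{q^3}=\alpha_2^{-1}$) that $\operatorname{N}(s_i) = 2 + \operatorname{T}(\alpha_i^2) + \operatorname{T}(\alpha_i^{-2})$ and $\operatorname{T}\!\big(s_i(s_i+\beta_i^{q^2})\big) = -12 - 4\operatorname{T}(\alpha_i^2) - 4\operatorname{T}(\alpha_i^{-2})$, which sum to $-\gamma_i = -10 - 3\operatorname{T}(\alpha_i^2) - 3\operatorname{T}(\alpha_i^{-2})$. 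Then Corollary~\ref{polyspez3d4} applies exactly as you outlined. So your overall framework (reduce to Corollary~\ref{polyspez3d4}, then solve the trace--norm equation) is correct, but the solvability step for $i=1,2$ must be done by hand rather than via Proposition~\ref{lsgi12}.
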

\begin{proof}
We give explicit solutions to $\operatorname{T}(s_i(s_i+\beta_i^{q^2}))+\operatorname{N}(s_i)=-\gamma_i$, for $i=1,2$.
As before, let $\alpha_1$ and $\alpha_2$ be roots of unity of orders $q^2+q+1$ and $q^2-q+1$, respectively. We compute
\begin{eqnarray*}
\beta_1&=&-3\alpha_1-3\alpha_1^{-1}-\alpha_1^{2q+1}-\alpha_1^{-2q-1}, \\
\beta_2&=&-3\alpha_2-3\alpha_2^{-1}-\alpha_2^{2q-1}-\alpha_2^{-2q+1}, \\
\gamma_i&=&10+3\operatorname{T}(\alpha_i^2)+3\operatorname{T}(\alpha_i^{-2}), \ \ i=1,2.
\end{eqnarray*}
We define $s_i=\alpha_i^{q^2}+\alpha_i^{-q^2}$. Note that $s_i \in \F_{q^3}$, since $\alpha_i^{q^3}=\alpha_i^{\pm1}$. Then for $i=1,2$,
\begin{eqnarray}
\operatorname{N}(s_i)&=&2+\operatorname{T}(\alpha_i^2)+\operatorname{T}(\alpha_i^{-2}). \nonumber 
\end{eqnarray}
Finally,
\begin{eqnarray*}
\operatorname{T}(s_1(s_1+\beta_1^{q^2}))&=&\operatorname{T}((\alpha_1^{q^2}+\alpha_1^{-q^2})(-2\alpha_1^{q^2}-2\alpha_1^{-q^2}-\alpha_1^{1-q}-\alpha_1^{-1+q})) \\
&=&-12-4\operatorname{T}(\alpha_1^2)-4\operatorname{T}(\alpha_1^{-2}), \\
\operatorname{T}(s_2(s_2+\beta_2^{q^2}))&=&\operatorname{T}((\alpha_2^{q^2}+\alpha_2^{-q^2})(-2\alpha_2^{q^2}-2\alpha_2^{-q^2}-\alpha_2^{1+q}-\alpha_2^{-1-q})) \\
&=&-12-4\operatorname{T}(\alpha_2^2)-4\operatorname{T}(\alpha_2^{-2}).
\end{eqnarray*}
Therefore, $s_i$ is a solution to $\operatorname{T}(s_i(s_i+\beta_i^{q^2}))+\operatorname{N}(s_i)=-\gamma_i$.
By Corollary~\ref{polyspez3d4}, $D$ specializes to elements $A_1$ and~$A_2$ with characteristic polynomials $f_1$ and~$f_2$.
\end{proof}

\noindent 
The following Proposition is due to P. M\"uller.
\begin{prop}\label{lsgi12}
Let $\beta$ and $\gamma$ be arbitrary elements in $\Fq$. Then there exists an $s \in \mathbb{F}_{q^3} \backslash \Fq$ such that $$ \operatorname{T}(s(s+\beta))+\operatorname{N}(s)=-\gamma.$$ In particular, $D$ specializes to an element with characteristic polynomial $f_3$. 
\end {prop}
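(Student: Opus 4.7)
The plan is to reparametrize elements $s \in \F_{q^3}$ by the coefficients of their characteristic polynomial over $\Fq$, thereby reducing the problem to producing an irreducible monic cubic over $\Fq$ whose coefficients satisfy a single linear constraint.

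First, using $\beta \in \Fq$, I will rewrite $\operatorname{T}(s(s+\beta)) + \operatorname{N}(s) = \operatorname{T}(s^2) + \beta \operatorname{T}(s) + \operatorname{N}(s)$. For any $s \in \F_{q^3}$, setting $a = \operatorname{T}(s)$, $b = ss^q + ss^{q^2} + s^q s^{q^2}$, and $n = \operatorname{N}(s)$, the characteristic polynomial of $s$ over $\Fq$ is $p_s(X) = X^3 - aX^2 + bX - n$, and $\operatorname{T}(s^2) = a^2 - 2b$. The target equation thus becomes
$$n = 2b - a^2 - \beta a - \gamma.$$
Since every monic cubic over $\Fq$ splits in $\F_{q^3}$, each triple $(a,b,n) \in \Fq^3$ is realized by some $s \in \F_{q^3}$; and since $\F_{q^2}$ is not a subfield of $\F_{q^3}$ (as $2 \nmid 3$), a reducible cubic over $\Fq$ has all of its $\F_{q^3}$-roots already in $\Fq$. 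Hence an element $s \in \F_{q^3} \setminus \Fq$ with the prescribed $(a,b,n)$ exists precisely when $p_s$ is irreducible over $\Fq$.

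It therefore remains to find $a, b \in \Fq$ with $n$ defined as above such that $p_{a,b}(X) := X^3 - aX^2 + bX - n$ is irreducible over $\Fq$. A value $c \in \Fq$ is a root of $p_{a,b}$ precisely when
$$b(c-2) = -c^3 + ac^2 - a^2 - \beta a - \gamma.$$
Taking $c = 2 \in \Fq$ (valid since $q$ is odd), this reduces to the $b$-free equation $A(a) := a^2 + (\beta - 4)\,a + 8 + \gamma = 0$, a quadratic admitting at most two roots in $\Fq$. For any of the at least $q - 2 \geq 1$ remaining values of $a$, each $c \in \Fq \setminus \{2\}$ contributes at most one forbidden value of $b$, yielding at most $q - 1$ bad $b$ among $q$ choices; hence some $b \in \Fq$ makes $p_{a,b}$ irreducible, and any root $s \in \F_{q^3}$ of that cubic then lies in $\F_{q^3} \setminus \Fq$ and satisfies the required identity.

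For the "in particular" statement I will apply the above with $\beta = \beta_3$ and $\gamma = \gamma_3$ (both lying in $\Fq$ by Prop.~\ref{identitaetencharpoly} together with a short direct verification that $\beta_3$ is Frobenius-stable), and then invoke Cor.~\ref{polyspez3d4} to conclude that $D$ specializes via $t \mapsto \beta_3 + s^q$, $s \mapsto s$ to an element with characteristic polynomial $f_3$. The most delicate point is the tightness of the counting for small $q$: already at $q = 3$ both the bound on bad $a$ and the bound on bad $b$ can be sharp, yet the strict inequalities still leave room for a good pair $(a, b)$, so the argument goes through uniformly for all odd $q$.
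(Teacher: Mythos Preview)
Your proof is correct and follows essentially the same approach as the paper's: both reparametrize $s$ by the coefficients $(\sigma_1,\sigma_2,\sigma_3)$ of its minimal polynomial, reduce the identity to the constraint $\sigma_3 = 2\sigma_2 - \sigma_1^2 - \beta\sigma_1 - \gamma$, and then use the value $c=2$ to first fix $\sigma_1$ (avoiding the at most two roots of the resulting quadratic) and finally count that at most $q-1$ choices of $\sigma_2$ give a reducible cubic. Your write-up is slightly more explicit about why irreducibility of the cubic is equivalent to $s\in\F_{q^3}\setminus\Fq$, but otherwise the arguments coincide.
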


\begin{proof}
The minimal polynomial of an element of $\mathbb{F}_{q^3}\backslash \Fq$ over $\Fq$ is $$(X-s)(X-s^q)(X-s^{q^2})=X^3-\sigma_1(s) X^2+\sigma_2(s) X-\sigma_3(s)$$ with $\sigma_1(s)=T(s)$, $\sigma_2(s)=s^{1+q}+s^{q+q^2}+s^{q^2+1}$ and $\sigma_3(s)=N(s)$. Since $\beta \in \Fq$, we have $T(s(s+\beta))=\sigma_1(s)^2-2\sigma_2(s)+\beta \sigma_1(s)$. Thus we are looking for an $s \in \mathbb{F}_{q^3} \backslash \Fq$ such that $-\sigma_3(s)=\sigma_1(s)^2-2\sigma_2(s)+\beta \sigma_1(s)+\gamma$. It is therefore sufficient to show that there exist elements $\sigma_1, \sigma_2 \in \Fq$ such that the polynomial 
$$f_{\sigma_1, \sigma_2}=X^3-\sigma_1X^2+\sigma_2X+(\sigma_1^2-2\sigma_2+\beta \sigma_1 +\gamma)$$ is irreducible in $\Fq[X]$(any zero of such a polynomial is then by construction a solution).\\
We have $f_{\sigma_1, \sigma_2}(2)=8-4\sigma_1+\sigma_1^2+\beta \sigma_1+\gamma$, hence we can fix a $\sigma_1 \in \Fq$ such that $f_{\sigma_1, \sigma_2}(2) \neq 0$ for all choices of $\sigma_2 \in \Fq$ (recall that $q$ is odd). For each $x \in \Fq \backslash \{2\}$, there exists a unique $\sigma_2 \in \Fq$ such that $f_{\sigma_1,\sigma_2}(x)=0$. Thus there are at most $q-1$ elements $\sigma_2 \in \Fq$ such that $f_{\sigma_1, \sigma_2}$ has a zero in $\Fq \backslash \{2\}$. We conclude that we can fix an element $\sigma_2$ such that $f_{\sigma_1, \sigma_2}$ has no zeroes in $\Fq$ and is thus irreducible. \\
Note that $\beta_3=-2-T(\alpha_1+\alpha_1^{-1})$ is contained in $\Fq$, hence $s$ as constructed above is a solution of the equation in Corollary \ref{polyspez3d4} for $i=3$ and thus $D$ specializes to an element with characteristic polynomial $f_3$.   
\end{proof}

\begin{thm} \label{suffcond3d4}
Let $(M, \Phi)$ be as before the $64$-dimensional Frobenius module over $\mathbb{F}_{q^3}$ with representing matrix $\phi(\overline{D})$. Then $\GalPhi(M) \cong {\Triality(q)}$.
\end{thm}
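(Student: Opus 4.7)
The upper bound $\Gal^\Phi(M) \leq \Triality(q)$ is immediate: by construction $\phi(\overline D) = \phi(\overline{D_0}) \cdot F(\phi(\overline{D_0})) \cdot F^2(\phi(\overline{D_0}))$ with $\overline{D_0} = \overline{x_1(t) w_1 x_2(s) w_2} \in \operatorname{PSO}_8(\mathbb{F}_{q^3}(t,s))$, so Theorem~\ref{thm:twupperbound} applied to the connected linear algebraic group $\mathcal G = \phi(\operatorname{PSO}_8)$ with Frobenius $F$ yields the inclusion. All the real work goes into the reverse inclusion, where the plan is to produce three elements of $\Gal^\Phi(M)$ of orders matching the three torus representatives $t_1$, $t_2$, $t_3$ listed above and then invoke a maximal-subgroup argument.

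The three specializations $\psi_i \colon \mathbb{F}_{q^3}(t,s) \to \mathbb{F}_{q^3}$ $(i=1,2,3)$ constructed via Corollary~\ref{polyspez3d4} and Propositions preceding this theorem send the characteristic polynomial of $D$ to the characteristic polynomial $f_i$ of $t_i$. Applying Theorem~\ref{thm:lowerbound}~(ii) with $\mathcal G = \phi(\operatorname{PSO}_8)$ and the rank-$k$ valuations corresponding to $\psi_i$ (which have residue field $\mathbb{F}_{q^3}$, so $f=1$ and the ``Frobenius-twist product'' reduces to $\widehat D_i = \psi_i(\phi(\overline D))$) gives, for each $i$, an element of $\Gal^\Phi(M)$ which is $\mathcal G(\overline{\mathbb{F}_q})$-conjugate to $\widehat D_i$. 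Since each $t_i$ is regular semisimple its characteristic polynomial $f_i$ is separable, hence $\widehat D_i$ and $\phi(\overline{t_i})$ share the same Jordan normal form and in particular have the same order as $\overline{t_i}$.

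To close the argument, I will appeal to the classification of maximal subgroups of $\Triality(q)$ due to Kleidman~\cite{Kleidman2}. One checks (as is already remarked earlier in this section when introducing the tori $t_1,t_2,t_3$) that no proper subgroup of $\Triality(q)$ meets all three relevant torus classes, i.e.\ contains elements of orders $|t_1|$, $|t_2|$ and $|t_3|$ simultaneously. Combined with the upper bound this forces $\Gal^\Phi(M) = \Triality(q)$.

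The main obstacle, which has actually been absorbed into the preceding propositions, is the solvability of the trace/norm equation $\operatorname{T}(s(s+\beta_i^{q^2}))+\operatorname{N}(s) = -\gamma_i$ over $\mathbb{F}_{q^3}$ so that the three required specializations exist; the case $i=3$ required the cubic-irreducibility argument of Proposition~\ref{lsgi12}. Once those solvability results are in hand, the passage from matching characteristic polynomials to matching orders, and from matching orders to generation of $\Triality(q)$, is routine.
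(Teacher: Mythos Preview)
Your overall strategy matches the paper's: upper bound via Theorem~\ref{thm:twupperbound}, lower bound via the three specializations $\psi_1,\psi_2,\psi_3$ and Theorem~\ref{thm:lowerbound}, then Kleidman's maximal-subgroup list. However, there is a concrete error in your justification of the order-matching step. You write that ``since each $t_i$ is regular semisimple its characteristic polynomial $f_i$ is separable,'' and from this deduce that $\widehat D_i$ and $\phi(\overline{t_i})$ share a Jordan normal form. But the $f_i$ are \emph{not} separable: inspecting the explicit diagonal lifts given just above the theorem, the $\SO_8$-lift of $t_1$ has $\alpha_1$ and $\alpha_1^{-1}$ each as a triple eigenvalue, and the lift of $t_3$ has $1$ as a double eigenvalue. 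Regularity of $t_i$ in the algebraic group $\operatorname{PSO}_8$ does not force distinct eigenvalues in the $8$-dimensional representation.

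Hence knowing only that $\psi_i(D)$ has characteristic polynomial $f_i$ does not by itself force $\psi_i(D)$ to be diagonalizable, so you cannot conclude conjugacy to the diagonal lift of $t_i$ in $\GL_8$, nor equality of orders. The paper's own proof is very terse here---it simply asserts that $\psi_i(D)$ is ``by construction conjugate to $t_i$''---so the point is glossed there as well; but your stated reason is false. To repair the argument one should either show directly that each $\psi_i(D)$ is semisimple, or note that its order is at least divisible by $\operatorname{ord}(t_i)$ (coming from the eigenvalues) and verify that this weaker divisibility already suffices in Kleidman's list.
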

\begin{proof}
By Theorem \ref{thm:lowerbound}, $\GalPhi(M)$ contains elements conjugate to $\phi(\psi_i(D))$, for $i=1,2,3$. These have the same orders as $\psi_i(D)$ (which are by construction conjugate to $t_i$) and thus generate $\Triality(q)$.
\end{proof}

\bigskip
\noindent
We remark that the 8-dimensional Frobenius module $(M_2, \Phi_2)$ with representing matrix~$D$ instead of~$\phi(\proj{D})$ over $(\F_{q^3},\phi_{q^3})$ has Galois group either $\Triality(q)$ or $\Triality(q)\times \operatorname{Z}_2$.
It is possible to compute an additive polynomial for $\GalPhi(M_2)$ in a similar manner as for the other groups by solving the system of equations~$D\cdot \phi_{q^3}(x) = x$ which defines the solution space of~$M_2$.
Unfortunately, the equations are intertwined to a certain extent and variable elimination requires a number of recursive steps during which the coefficients of the resulting univariate polynomial become exorbitantly large.
They can be computed with a computer algebra system but the polynomial for $q=3$, e.g., occupies~$\sim\!\!11,800$\nolinebreak{} lines when written out so that it cannot be reproduced it here.
Nonetheless, we describe the steps which lead to a solution of the system.

We let $x=(x_1,\ldots,x_8) \in M$ and in what follows abbreviate $x_1$ by $X$.
Then $D\cdot \phi_{q^3}(x) = x$ is equivalent to an \kxk{8}-system of equations.
The second equation is~$x_2 = -X^{q^3}$.
Combined with the last one we get $x_8 = x_2^{\,q^3} = -X^{q^6}$.
The seventh and fourth equations now yield
\[
  x_7 = -t x_2^{\,q^3} - x_8^{\,q^3} = t X^{q^6} + X^{q^9}, \qquad \qquad
  x_4 = s^{q^2} X^{q^6} + x_3^{\,q^3}.
\]
The expression for~$x_4$ is substituted in the sixth equation, which results in $x_6 = s X^{q^6} - s^{q^5} X^{q^9} - x_3^{\,q^6}$.
We plug both expressions into the fifth equation:
\[
  x_5 = t^q X^{q^6} + (s^{q^5+q} + s^{q^3}) X^{q^9} - s^{q^8} X^{q^{12}} + s^q x_3^{\,q^6} - x_3^{\,q^9},
\]
This in turn is substituted in the third equation:
\begin{multline}
  b^q X^{q^6} - t^{q^4} X^{q^9} - (s^{q^8+q^4} + s^{q^6}) X^{q^{12}} + s^{q^{11}} X^{q^{15}}\\
- x_3 + s^q x_3^{\,q^3} - s^{q^4} x_3^{\,q^9} + x_3^{q^{12}} = 0
\label{eqn:3D4 first}
\end{multline}
Here, we abbreviate $b=s^{q+1}+t^q$ for simplicity.
Substituting everything obtained so far (except the last equation) into the first equation yields:
\begin{multline}
  -X - tX^{q^3} - aX^{q^6} + \big( s^{q^3+q^2} + t^{q^3} - s^{q^5} t^{q^2} - st^{q^4} - s^{q^8+q^2} \big) \cdot X^{q^9}\\
  - \big( s^{q^8+q^4+q} + s^{q^6+q} - 1 \big) \cdot X^{q^{12}} + s^{q^{11}+q}\, X^{q^{15}}\\
\;+\; x_3^{\,q^3} - t^{q^2} x_3^{\,q^6} - (s^{q^4+q}+1) \cdot x_3^{\,q^9} - s\, x_3^{\,q^{12}} = 0
\label{eqn:3D4 second}
\end{multline}%
We have arrived at a system of two equations involving only the variables $X$ and~$x_3$.
Now the following recursive steps can be used to eliminate~$x_3$.
First, we add suitable multiples of~(\ref{eqn:3D4 first}) and (\ref{eqn:3D4 second}) so that the highest terms~$x_3^{\,q^{12}}$ cancel, leaving $x_3^{\,q^9}$ as the highest remaining power.
We apply $\phi_{q^3}$ to the resulting equation and again add to it a suitable multiple of~(\ref{eqn:3D4 first}) or~(\ref{eqn:3D4 second}) to eliminate the newly created term involving~$x_3^{\,q^{12}}$.
This yields a second equation with~$x_3^{\,q^9}$ as the highest power.
The same procedure is applied to the two newly derived equations, resulting in two further equations which involve only $x_3^{\,q^2}$ as highest powers, etc.
After two further steps there only remain linear terms of~$x_3$ and these can be eliminated by one further addition.
The final result is a univariate additive polynomial~$f(X)$ which is easily seen to have $\GalPhi(M_2)$ as Galois group (the reasoning goes along the same lines as for the other groups).

\clearpage


\begin{thebibliography}{KMRT98}
%\addcontentsline{toc}{section}{References}

\bibitem[Abh01]{Abhyankar01}
S.\,S. Abhyankar and N.\,F.\,J. Inglis.
\newblock Galois Groups of Some Vectorial Polynomials.
\newblock {\em Trans. Amer. Math. Soc.,} 353:2941-2969, 2001.

\bibitem[Alb07]{Albert}
M. Albert.
\newblock {\em Classical Groups as Galois Groups in Positive Characteristic.}
\newblock Diplomarbeit, Heidelberg, 2007.

\bibitem[BCP97]{Magma}
W. Bosma, J. Cannon and C. Playoust.
\newblock The Magma Algebra System. I. The User Language.
\newblock {\em Journal of Symbolic Computation}, 24:235-265, 1997.

\bibitem[Car85]{Carter85}
R.\,W. Carter.
\newblock {\em Finite groups of Lie type.}
\newblock Wiley series in pure and applied mathematics, 1985.

\bibitem[Car89]{Carter89}
R.\,W. Carter.
\newblock {\em Simple groups of Lie type.}
\newblock John Wiley \& Sons, 1972, 1989.

\bibitem[Con08]{McKay08}
J. Conway, J. McKay and A. Trojan.
\newblock {\em Galois Groups Over Function Fields of Positive Characteristic.}
\newblock {\tt arXiv:0811.0076v1 [math.NT]}

\bibitem[Coo81]{Cooperstein}
B.N. Cooperstein.
\newblock Maximal subgroups of $G_2(2^n)$.
\newblock {\em Journal of Algebra}, 70:23-36, 1981.

\bibitem[DM87]{DM}
D.I. Deriziotis and G.O. Michler.
\newblock Character table and blocks of finite simple triality groups $\Triality(q)$.
\newblock {\em Trans. Amer. Math. Soc.,} 303:39-70, 1987.

\bibitem[Elk97]{Elkies}
Noam D. Elkies.
\newblock Linearized Algebra and Finite Groups of Lie Type, I: Linear and Symplectic Groups.
\newblock In {\em Applications of Curves over Finite Fields,} pages 77-108. 1997.
\newblock (AMS-IMS-SIAM Joint Summer Research Conference, July 1997, Washington, Seattle; M. Fried, ed.; Providence: AMS, 1999) = {\em Contemp. Math.} 245.

\bibitem[EP05]{EnglerPrestel}
A.\,J. Engler and A. Prestel.
\newblock {\em Valued Fields.}
\newblock Springer, Berlin, 2005.

\bibitem[Gag73]{Gager}
P.\,C. Gager
\newblock {\em Maximal tori in finite groups of Lie type.}
\newblock Ph.D. Thesis, University of Warwick, 1973.

\bibitem[Gec03]{Geck}
M. Geck,
\newblock {\em An Introduction to Algebraic Geometry and Algebraic Groups.}
\newblock Oxford University Press, 2003. 

\bibitem[Gos98]{Goss}
D. Goss
\newblock {\em Basic structures of function field arithmetic,}
\newblock 2nd edition, Springer, Berlin, 1998.

\bibitem[KLM01]{KLM}
G. Kemper, F. Luebeck and K. Magaard.
\newblock Matrix Generators for the Ree Groups $^2G_2(q)$.
\newblock {\em Comm. Algebra}, 29(1):407-415, 2001.

\bibitem[Kle88]{Kleidman}
P.B. Kleidman
\newblock The Maximal Subgroups of the Chevalley Groups $G_2(q)$ with $q$ Odd, the Ree Groups $^2 G_2(q)$, and Their Automorphism Groups
\newblock {\em Journal of Algebra}, 117:30-71, 1988.

\bibitem[Kle88b]{Kleidman2}
P.\,B. Kleidman.
\newblock The Maximal Subgroups of the Steinberg triality groups $\Triality(q)$ and of their automorphism groups.
\newblock {\em Journal of Algebra,} 115:182-199, 1988.

\bibitem[MSW94]{MalleSaxlWeigel}
G. Malle, J. Saxl, and T. Weigel.
\newblock Generation of Classical Groups.
\newblock {\em Geom. Dedicata}, 49:85-116, 1994.

\bibitem[Mai08]{Maier}
A. Maier
\newblock {\em Additive polynomials for twisted groups of Lie type.}
\newblock Diplomarbeit, Heidelberg, 2008.

\bibitem[Mal03]{Malle}
G. Malle.
\newblock Explicit realization of the Dickson groups $G_2(q)$ as Galois groups.
\newblock {\em Pacific Journal of Mathematics}, 212(1):157-167, 2003.

\bibitem[Mat03]{Matzat}
B.\,H. Matzat.
\newblock Frobenius Modules and Galois Groups.
\newblock In {\em Galois Theory and Modular Forms}, pp.~233-267. Kluwer, Dordrecht, 2003.

\bibitem[SMC95]{Sandor_et_al}
Joszef Sándor, Dragoslav S. Mitrinovic, and Borislav Crstici.
\newblock {\em Handbook of Number Theory I,}
\newblock Springer-Verlag, 1995.

\bibitem[Spr98]{Springer}
T.\,A. Springer.
\newblock {\em Linear Algebraic Groups,}
\newblock 2nd edition, Birkhäuser, 1998.

\bibitem[Ste65]{Steinberg}
R. Steinberg.
\newblock Regular elements of semi-simple algebraic groups.
\newblock {\em Inst. Hautes Études Sci. Publ. Math.}, 25:49-80, 1965.

\bibitem[Ste68]{Steinberg68}
R. Steinberg.
\newblock Endomorphisms of linear algebraic groups.
\newblock {\em Mem. Amer. Math. Soc.}, 80, 1968.

\bibitem[Suz64]{Suzuki}
M. Suzuki.
\newblock On a class of doubly transitive groups.
\newblock {\em Annals of Mathematics}, 75:104-145, 1962.

\end{thebibliography}
\end{document}